\documentclass[10pt]{amsart}
\usepackage {amsmath, amscd}
\usepackage {amssymb}
\usepackage {bm}
\usepackage{mathpazo}

\usepackage[active]{srcltx}



\linespread{1.15}
\def\<{\langle}
\def\>{\rangle}

\numberwithin{equation}{section}

\def\CC{{\mathcal C}}
\def\DD{{\mathcal D}}

\def\HH{{\mathcal H}}

\def\JJ{{\mathcal J}}

\def\TT{{\mathcal T}}

\def\VV{{\mathcal V}}

\def\bbC{\mathbb{C}}

\def\bbN{\mathbb{N}}

\def\bbD{\mathbb{D}}
\def\bbT{\mathbb{T}}

\usepackage{color}

\makeatletter
\def\moverlay{\mathpalette\mov@rlay}
\def\mov@rlay#1#2{\leavevmode\vtop{%
   \baselineskip\z@skip \lineskiplimit-\maxdimen
   \ialign{\hfil$\m@th#1##$\hfil\cr#2\crcr}}}
\newcommand{\charfusion}[3][\mathord]{
    #1{\ifx#1\mathop\vphantom{#2}\fi
        \mathpalette\mov@rlay{#2\cr#3}
      }
    \ifx#1\mathop\expandafter\displaylimits\fi}
\makeatother

\newcommand{\dist}{\mathop{\rm dist}}

\newcommand{\Cont}{\mathbf{C}(\bbT)}

\newcommand{\Wth}{{\Phi}}

\newcommand{\Sp}{{\mathfrak{s}}}

\newtheorem{lemma}{Lemma}[section]
\newtheorem{proposition}[lemma]{Proposition}
\newtheorem{theorem}[lemma]{Theorem}
\newtheorem{corollary}[lemma]{Corollary}

\theoremstyle{definition}

\title[Recent results on truncated Toeplitz operators]{Recent results on truncated Toeplitz operators}

\author[Chalendar]{Isabelle Chalendar}
\address{  Universit\'e Lyon 1, INSA de Lyon, Ecole Centrale de Lyon, CNRS, UMR 5208, Institut Camille Jordan, 43 boulevard du 11 novembre 1918,
 F-69622 Villeurbanne Cedex, France}
\email{chalendar@math.univ-lyon1.fr}

\author[Fricain]{Emmanuel Fricain}
 \address{Laboratoire Paul Painlev\'e, Universit\'e Lille 1, 59 655 Villeneuve d'Ascq C\'edex, France}
 \email{emmanuel.fricain@math.univ-lille1.fr}

\author[Timotin]{Dan Timotin}
	\address{Institute of Mathematics of the 
 Romanian Academy, PO Box 1-764, Bucharest 014700, Romania.}
	\email{Dan.Timotin@imar.ro}

\keywords{Truncated Toeplitz operators, model spaces, compactness}

\subjclass[2010]{30J05, 30H10, 46E22}

\begin{document}
	
	\begin{abstract}
		Truncated Toeplitz operators are  compressions of Toe\-plitz operators on model spaces; they have received much attention in the last years. This survey article presents several recent results, which relate boundedness, compactness, and spectra  of these operators to properties of their symbols. We also connect these facts with properties of the natural embedding measures associated to these operators. 
	\end{abstract}
	
	\maketitle

	\maketitle

	\section{Introduction}
	Truncated Toeplitz operators on model spaces have been formally introduced by Sarason in \cite{Sa}, although some special cases have long ago appeared in literature, most notably as model operators for completely nonunitary contractions with defect numbers one and for their commutant. This new area of study has been recently very active and many open questions posed by Sarason in \cite{Sa} have now been solved. See \cite{BBK,Be,MR3257897,MR3203060,MR3162251,GR,MR3043586,MR2869112,MR2830601,MR2679022,MR2648079}. Nevertheless, there are still basic and interesting questions which remain mysterious. 
	
	The truncated Toeplitz operators live on the  model spaces $K_\Theta$, which are the closed invariant subspaces for the backward shift operator $S^*$ acting on the Hardy space $H^2$ (see Section 2 for precise definitions). Given a model space $K_\Theta$ and a function $\phi\in L^2=L^2(\bbT)$, the truncated Toeplitz operator $A_\phi^\Theta$ (or simply $A_\phi$ if there is no ambiguity regarding the model space) is defined on a dense subspace of $K_\Theta$ as the compression to $K_\Theta$ of multiplication by $\phi$. The function $\phi$ is then called a symbol of the operator. An alternate way of defining a truncated Toeplitz operator is by means of a measure; in case $\phi$ is bounded, then a possible choice of the defining measure for $A^\Theta_\phi$ is $\phi\,dm$ (with $m$ Lebesgue measure).

	Note that the symbol or the associated measure are never uniquely defined by the operator. From this and other points of view the truncated Toeplitz operators have much more in common with Hankel Operators than with Toeplitz operators. This point of view will be occasionally pursued throughout the paper. 
	
	We intend to survey  several recent results that are mostly scattered in the literature. They focus on the relation between the operator and the symbol or the measure. Obviously the nonuniqueness is a main issue, and in some situations it may be avoided by considering the so-called standard symbol of the operator. The properties under consideration are boundedness, compactness, and spectra.   Most of the results presented are known, and our intention is only to put them in context and emphasize their connections, indicating the relevant references. Part of the embedding properties of measures have not appeared explicitely in the literature, so some proofs are provided only where references seemed to be lacking.
	
The structure of the paper is the following. After a preliminary section with generalities about Hardy spaces and model spaces, we discuss in section~3 Carleson measures, first for the whole $H^2$ and then for model spaces. Truncated Toeplitz operators are introduced in Section~4, where one also discusses some boundedness properties. Section~5 is dedicated to compactness of truncated Toeplitz operators, and Section~6 to its relation to embedding measures. The last two sections discuss Schatten--von Neumann and spectral properties, respectively.

	\section{Preliminaries}
	For the content of this section, \cite{MR0268655} is a classical reference for general facts about Hardy spaces, while~\cite{MR827223} can be used  for Toeplitz and Hankel operators as well as for model spaces. 
	
	\subsection{Function spaces}
	
	Recall that the Hardy space $H^p$ of the unit disk $\bbD=\{z\in\bbC:\, |z|<1\}$ is the space of analytic functions $f$ on $\bbD$ satisfying $\|f\|_p<+\infty$, where 
	\[
	\|f\|_p=\sup_{0\leq r<1}\left(\int_0^{2\pi}|f(re^{it})|^p\frac{dt}{2\pi}\right)^{1/p},\qquad (1\leq p<+\infty).
	\]
	The algebra of bounded analytic functions on $\bbD$ is denoted by $H^\infty$. We denote also $H_0^p=zH^p$ and $H^p_-=\overline{z H^p}$. Alternatively, $H^p$ can be identified (via radial limits) to the subspace of functions $f\in L^p=L^p(\bbT)$ for which $\hat f(n)=0$ for all $n<0$. Here $\bbT$ denotes the unit circle with normalized Lebesgue measure $m$.

	In the case  $p=2$, $H^2$ becomes a Hilbert space with respect to the scalar product inherited from $L^2$ and given by 
	$$
	\langle f,g\rangle_2=\int_{\bbT}f(\zeta)\overline{g(\zeta)}\,dm(\zeta),\qquad f,g\in L^2.
	$$
	The orthogonal projection from $L^2$ to $H^2$ will be denoted by $P_+$. The space $H_-^2$ is precisely the orthogonal of $H^2$, and the corresponding orthogonal projection is $P_-=I-P_+$.

	The Poisson transform of a function $f\in L^1$ is 
	\begin{equation}\label{eq:poisson transform}
		\hat f(z) =\int_\bbT f(\xi) \frac{1-|z|^2}{|1-\xi\bar z|^2}\, d\xi, \quad z\in\bbD.
	\end{equation}

Suppose now $\Theta$ is an inner function, that is a function in $H^\infty$ whose radial limits are of modulus one almost everywhere on $\bbT$.	Its spectrum  is defined by 
\begin{equation}\label{eq:def spectrum of theta}
\Sp(\Theta):=\{\zeta\in\bbD: \liminf_{\lambda\in\bbD, \lambda\to\zeta}|\Theta(\lambda)|=0\}.
\end{equation}

	Equivalently, if $\Theta=BS$ is the decomposition of $\Theta$ into a Blaschke product and a singular inner function, then $\rho(\Theta)$ is the union between the closure of the limit points of the zeros of $B$ and the support of the singular measure associated to $S$. We will also define 
	\[
	\rho(\Theta)=\Sp(\Theta)\cap \bbT.
	\]
	
	We define the corresponding {\emph {shift-coinvariant subspace}} generated by $\Theta$ (also called \emph{model space}) by the formula $K_\Theta^p=H^p\cap \Theta \overline{H_0^p}$, where $1\leq p<+\infty$.  We will be especially interested in the Hilbert case, that is when $p=2$. In this case, we also denote by $K_\Theta=K_\Theta^2$ and it is easy to see that $K_\Theta$ is also given by the following 
	\[
	K_\Theta=H^2\ominus\Theta H^2=\left\{f\in H^2:\langle f,g\rangle=0,\forall g\in H^2\right\}.
	\]
	The orthogonal projection of $L^2$ onto $K_\Theta$ is denoted by $P_\Theta$. It is well known (see \cite[page 34]{MR827223}) that $P_\Theta=P_+-\Theta P_+\bar\Theta$. Since $P_+$ acts boundedly on $L^p$, $1<p< \infty$, this formula shows that $P_\Theta$ can also be regarded as a bounded operator from $L^p$ into 
	$K_\Theta^p$, $1<p<\infty$.

The spaces $H^2$ and $K_\Theta$ are reproducing kernel spaces over the unit disc $\bbD$. The respective reproducing kernels are, for $\lambda\in\bbD$,
\[
\begin{split}
k_\lambda(z)&= \frac{1}{1-\bar{\lambda} z},\\
k_\lambda^\Theta(z)&= \frac{1-\overline{\Theta(\lambda)}\Theta(z)}{1-\bar{\lambda} z}.
\end{split}
\]	

Evaluations at certain points $\zeta\in\bbT$ may also be bounded sometimes; this happens precisely when $\Theta$ has an angular derivative in the sense of Caratheodory at $\zeta$~\cite{AC}. In this case the function $k^\Theta_\zeta(z)=\frac{1-\overline{\Theta(\zeta)}\Theta(z)}{1-\bar{\zeta} z} $ is in $K_\Theta$, and it is the reproducing kernel for the point $\zeta$.
	 
	It easy to check that, if $f,g\in K_\Theta$, then $fg\in H^1\cap \bar z \Theta^2 H_-^1\subset K^1_{\Theta^2}$. In particular, if $f,g$ are also bounded, then $fg\in K_{\Theta^2}$. So $(k^\Theta_\lambda)^2\in K_{\Theta^2}$ for all $\lambda\in\bbD$.

The map $C_\Theta$ defined on $L^2$  by 
\begin{equation}\label{eq:conjugation}
C_\Theta f=\Theta \bar z\bar f;
\end{equation}
is a conjugation (i.e. $C_\Theta$ is anti-linear, isometric and involutive), which has the convenient supplementary property of mapping
$K_\Theta$ precisely onto $K_\Theta$. 
	
	\subsection{One-component inner functions}
	In view of their main role in the study of operators on model spaces, we devote this subsection to a particular class of inner functions.  Fix a number $0<\epsilon<1$, and define
	\begin{equation}\label{eq:definition of Omega}
	\Omega(\Theta,\epsilon)=\{z\in \bbD:|\Theta(z)|<\epsilon\}.
	\end{equation}
	The function $\Theta$ is called \emph{one-component} if there exists a value of $\epsilon$ for which  $\Omega(\Theta,\epsilon)$ is connected. (If this happens, then 	$\Omega(\Theta,\delta)$ is connected for every $\epsilon<\delta<1$.) One-component functions have been introduced by Cohn~\cite{Cohn1}. An extensive study of these functions appears in~\cite{AI23,A2}; all  results quoted below appear in~\cite{A2}.
	
	The above definition is not very transparent. In fact,  one-component functions are rather special: a first immediate reason is that they must satisfy $m(\rho(\Theta))=0$. This condition, of course, is not sufficient, but it suggests examining some simple cases. 
	
The set $\rho(\Theta)$ is empty for finite Blaschke products, which are one-component. 	The next simplest case is  when $\rho(\Theta)$ consists of just one point.  One can prove easily that the elementary singular inner functions
	$\Theta(z)=e^{\frac{z+\zeta}{z-\zeta}}$ (for $\zeta\in\bbT$) are indeed one-component. 
	
	Suppose then that $\Theta$ is a Blaschke product whose zeros $a_n$ tend nontangentially to a single point $\zeta\in \bbT$. If 
	\begin{equation}\label{eq:condition one-component}
		\inf_{n\ge 1}\frac{|\zeta-a_{n+1}|}{|\zeta-a_n|}>0,
	\end{equation}
	then $\Theta$ is one-component. So, in particular, if $0<r<1$ and $\Theta$ is the Blaschke product with zeros $1-r^n$, $n\geq 1$, then $\Theta$ is one-component. If condition~\eqref{eq:condition one-component} is not satisfied, then usually $\Theta$ is not   one-component. A detailed discussion of such Blaschke products  is given in~\cite{A2}, including the determination of the classes $\CC_p(\Theta)$ (see Subsection~\ref{subsection-carl-model-space}).

	One-component inner functions can be characterized by an estimate on  the $H^\infty$ norm of the reproducing kernels $k^\Theta_\lambda$. While for a general inner function $\Theta$ we have $\|k^\Theta_\lambda\|_\infty=O (1-|\lambda|^{-1})$, this estimate can be improved for one-component functions: $\Theta$ is one-component if and only if there exists a constant $C>0$ such that for every $\lambda\in\mathbb D$, we have 
	\[
	\|k^\Theta_\lambda\|_\infty \leq C \frac{1-|\Theta(\lambda)|}{1-|\lambda|}.
	\]


\subsection{Multiplication operators and their cognates}
	
	For $\phi\in L^\infty$, we denote by $M_\phi f=\phi f$ the multiplication operator on $L^2$; we have $\|M_\phi\|=\|\phi\|_\infty$. The Toeplitz operator $T_\phi: H^2\longrightarrow H^2$ and the Hankel operator $H_\phi: H^2\longrightarrow H^2_-=L^2\ominus H^2$ are given by the formulae
	$$
	T_\phi=P_+ M_\phi,\qquad H_\phi=P_-M_\phi.
	$$ 
	In the case where $\phi$ is analytic, $T_\phi$ is just the restriction of $M_\phi$ to $H^2$. We have $T_\phi^*=T_{\overline{\phi}}$ and $H_\phi^*=P_+M_{\overline{\phi}}P_-$. 
	
	It should be noted that, while the symbols of $M_\phi$ and $T_\phi$ are uniquely defined by the operators, this is not the case with $H_\phi$. Indeed, it is easy to check that $H_\phi=H_{\psi}$ if and only if $\phi-\psi\in H^\infty$. So statements about Hankel operators often imply only the existence of a symbol  with corresponding properties.

	The Hankel operators have the range and domain spaces different. It is sometimes preferable to work with an operator acting on a single space. For this, we introduce in $L^2$ the unitary symmetry $\JJ$ defined by
	\[
	\JJ(f) (z)=\bar z f(\bar z).
	\]
	We have then $\JJ(H^2)=H^2_-$ and $\JJ(H^2_-)=H^2$. Define $\Gamma_\phi:H^2\to H^2$ by
	\begin{equation}\label{eq:def of Gamma}
		\Gamma_\phi=\JJ H_\phi.
	\end{equation}
	Obviously properties of boundedness or compactness are the same for $H_\phi$ and $\Gamma_\phi$.
	
	The definition of $M_\phi$, $T_\phi$ and $H_\phi$ can be extended to the case when the symbol $\phi$ is only in $L^2$ instead of $L^\infty$, obtaining (possibly unbounded) densily defined operators. Then $M_\phi$ and $T_\phi$ are bounded if and only if $\phi\in L^\infty$ (and $\|M_\phi\|=\|T_\phi\|=\|\phi\|_\infty$).
	The situation is more complicated for $H_\phi$. Namely, $H_\phi$ is bounded if and only if there exists $\psi\in L^\infty$ with  $H_\phi= H_{\psi}$, and
	\[
	\|H_\phi\|=\inf \{\|\psi\|_\infty :H_\phi= H_{\psi} \}
	\]
	This  is known as Nehari's Theorem; see, for instance,~\cite[p. 182]{MR1864396}. Moreover (but we will not pursue this in the sequel) an equivalent condition is
	 $P_-\phi\in BMO$ (and $\|H_\phi\|$ is then a norm equivalent to $\|P_-\phi\|_{BMO}$). 
	
	Related results are known for compactness. The operators $M_\phi$ and $T_\phi$ are never compact except in the trivial case $\phi\equiv0$.  Hartman's Theorem states that $H_\phi$ is compact 
	 if and only if there exists $\psi\in \Cont$ with  $H_\phi= H_{\psi}$; or, equivalently,  $P_-\phi\in VMO$. If we know that $\phi$ is bounded, then $H_\phi$ is compact if and only if $\phi\in\Cont+H^\infty$.

	\section{Carleson measures}
	
	\subsection{Embedding of Hardy spaces}
	
	Let us discuss first some objects related to the Hardy space; we will afterwards see what analogous facts are true for the case of model spaces.
	
	A positive measure $\mu$ on $\bbD$ is called a Carleson measure if $H^2\subset L^2(\mu)$ (such an inclusion is automatically continuous). It is known that this is equivalent to $H^p\subset L^p(\mu)$ for all $1\le p<\infty$. Carleson measures can also be characterized by a geometrical condition, as follows. For an arc $I\subset\bbT$ such that $|I|<1$ we define
	\[
	S(I)=\{z\in\bbD: 1-|I|<|z|<1 \mbox{ and } z/|z|\in I\}.
	\]
	Then $\mu$ is a Carleson measure if and only if 
	\begin{equation}\label{eq:carleson}
	\sup_I\frac{\mu(S(I))}{|I|}<\infty.
	\end{equation}
	Condition~\eqref{eq:carleson} is called the \emph{Carleson condition}.
	
	The result can actually be extended (see~\cite{BJ}) to measures defined on $\overline{\bbD}$. Again the characterization does not depend on $p$, and it amounts to the fact that $\mu|_\bbT$ is absolutely continuous with respect to Lebesgue measure with essentially bounded density, while $\mu|_\bbD$ satisfies~\eqref{eq:carleson}.
	
There is a link between Hankel operators and  Carleson measures that has first appeared in~\cite{Pow, Wid}; a comprehensive presentation can be find in~\cite[1.7]{Peller}. Let $\mu$ be a finite complex measure on $\overline{\bbD}$. Define the operator $\Gamma[\mu]$ on analytic polynomials by the formula
\[
\<\Gamma[\mu] f, g\>=\int_{\overline{\bbD}} z f(z)\overline{g(\bar z)}\, d\mu(z).
\]
Note that if $\mu$ is supported on $\bbT$, then the matrix of $\Gamma[\mu]$ in the standard basis of $H^2$ is $(\hat\mu(i+j))_{i,j\ge 0}$, where $\hat{\mu}(i)$ are the Fourier coefficients of~$\mu$.

Then the operator $\Gamma[\mu]$ is bounded whenever $\mu$ is a Carleson measure. Conversely, if $\Gamma[\mu]$ is bounded, then there exists a Carleson measure $\nu$ on $\bbD$ such that $\Gamma[\mu]=\Gamma[\nu]$.

	It is easy to see that if $d\mu=\phi dm$ for some $\phi\in L^\infty$, then $\Gamma[\mu]=\Gamma_\phi$, where $\Gamma_\phi$ has been defined by~\eqref{eq:def of Gamma} and is the version of a Hankel operator acting on a single space.

	Analogous results may be proved concerning compactness. In this case the relevant notion is that of \emph{vanishing Carleson measure}, which is defined by the property
		\begin{equation}\label{eq:vcarleson}
		\lim_{|I|\to0}\frac{\mu(S(I))}{|I|}=0.
		\end{equation}
		Note that vanishing Carleson measures cannot have mass on the unit circle (intervals containing a Lebesgue point of the corresponding density would contradict the vanishing condition).
		Then the embedding $H^p\subset L^p(\mu)$ is  compact if and only if $\mu$ is a vanishing Carleson measure.

	A similar connection exists to compactness of Hankel operators. If $\mu$ is a vanishing Carleson measure on $\overline{\bbD}$, then  $\Gamma[\mu]$ is compact. Conversely, if $\Gamma[\mu]$ is compact, then there exists a vanishing Carleson measure $\nu$ on $\overline{\bbD}$ such that $\Gamma[\mu]=\Gamma[\nu]$.

		\subsection{Embedding of model spaces}\label{subsection-carl-model-space}

	 Similar questions for model spaces have been developed starting with the papers~\cite{Cohn1,Cohn2} and~\cite{VT}; however, the results in this case are less complete. Let us introduce first some notations. For $1\le p <\infty$, define
	 \[
	 \begin{split}
	 \CC_p(\Theta)&=\{\mu \text{ finite measure on }\bbT: K^p_\Theta\hookrightarrow L^p(|\mu|) \text{ is bounded}\},\\
	 \CC^+_p(\Theta)&=\{\mu \text{ positive measure on }\bbT: K^p_\Theta\hookrightarrow L^p(\mu) \text{ is bounded}\},\\
	 \VV_p(\Theta)&=\{\mu \text{ finite measure on }\bbT: K^p_\Theta\hookrightarrow L^p(|\mu|) \text{ is compact}\},\\
	 \VV^+_p(\Theta)&=\{\mu \text{ positive measure on }\bbT: K^p_\Theta\hookrightarrow L^p(\mu) \text{ is compact}\}.
	 \end{split} 
	 \]
	 
	 It is clear that $\CC_p(\Theta)$ and $\VV_p(\Theta)$ are complex vectorial subspaces of the complex measures on the unit circle. Using the  relations $K_{\Theta^2}=K_\Theta\oplus \Theta K_\Theta$ and $K_\Theta \cdot K_\Theta\subset K_{\Theta^2}^1$, it is easy to see that  $\CC_2(\Theta^2)=\CC_2(\Theta)$, $\CC_1(\Theta^2)\subset\CC_2(\Theta)$, and $\VV_1(\Theta^2)\subset \VV_2(\Theta)$.

	 It is natural to look for geometric conditions to characterize these classes. Things are, however, more complicated, and the results are only partial. We start by fixing a number $0<\epsilon<1$; then the $(\Theta,\epsilon)$-\emph{Carleson condition} asserts that 
	 \begin{equation}\label{eq:thetacarleson}
	 \sup_I\frac{\mu(S(I))}{|I|}<\infty,
	 \end{equation}
	 where the supremum is taken only over the intervals $|I|$ such that $S(I)\cap \Omega(\Theta,\epsilon)\not=\emptyset $. (Remember that $\Omega(\Theta,\epsilon)$ is given by~\eqref{eq:definition of Omega}.)
	 
	 It is then proved in~\cite{VT} that if $\mu$ satisfies the $(\Theta,\epsilon)$-Carleson condition, then the embedding $K_\Theta^p\subset L^p(\mu)$ is continuous. The converse is true if $\Theta$ is \emph{one-component}; in which case  the embedding condition does not depend on $p$, while fulfilling of the $(\Theta, \epsilon)$-Carleson condition does not depend on $0<\epsilon<1$ (see Theorem~\ref{th:Aleksandrov one component} below).

As concerns the general case, it is shown by Aleksandrov~\cite{A2} that if the converse is true for some $1\le p <\infty$, then $\Theta$ is one-component. Also, $\Theta$ is one-component if and only if the embedding condition does not depend on~$p$. More precisely, the next theorem is proved in~\cite{A2} (note that a version of this result for $p\in (1,\infty)$ already appears in \cite{VT}).

\begin{theorem}\label{th:Aleksandrov one component}
	The following are equivalent for an inner function $\Theta$:
	\begin{enumerate}
		\item $\Theta$ is one-component.
		\item For some $0<p<\infty$ and $0<\epsilon<1$, $\CC_p(\Theta)$ concides with the class of measures that satisfy the $(\Theta, \epsilon)$-Carleson condition.
		\item For all $0<p<\infty$ and $0<\epsilon<1$, $\CC_p(\Theta)$ concides with the class of measures that satisfy the $(\Theta, \epsilon)$-Carleson condition.
		\item The class $\CC_p(\Theta)$ does not depend on $p\in(0, \infty)$.
	\end{enumerate}
\end{theorem}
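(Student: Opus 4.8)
The plan is to establish the cycle $(1)\Rightarrow(3)\Rightarrow(2)\Rightarrow(1)$ together with $(3)\Rightarrow(4)\Rightarrow(1)$, which yields all four equivalences. Two of these links cost nothing: $(3)\Rightarrow(2)$ is a specialization, and $(3)\Rightarrow(4)$ holds because, for a fixed $\epsilon_0$, the $(\Theta,\epsilon_0)$-Carleson condition is a property of the measure that does not mention $p$, so if $\CC_p(\Theta)$ coincides with that single class for every $p$, it cannot depend on $p$. The real content lies in $(1)\Rightarrow(3)$ (the positive direction) and in the two ``converse'' implications $(2)\Rightarrow(1)$ and $(4)\Rightarrow(1)$; all three are due to Aleksandrov~\cite{A2}, and I describe the strategy for each.

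For $(1)\Rightarrow(3)$ one inclusion is available for every inner function: by~\cite{VT} (and, in the full range $0<p<\infty$, as part of the assertion being proved in~\cite{A2}) any measure satisfying the $(\Theta,\epsilon)$-Carleson condition belongs to $\CC_p(\Theta)$. Assuming $\Theta$ one-component, it remains to prove the reverse inclusion. I would treat $p=2$ first, where the computation is transparent: given the embedding inequality $\|f\|_{L^2(|\mu|)}\le C\|f\|_2$ on $K_\Theta$, feed it the reproducing kernels $f=k^\Theta_\lambda$ with $\lambda\in\Omega(\Theta,\epsilon)$. On one side, $\|k^\Theta_\lambda\|_2^2=(1-|\Theta(\lambda)|^2)/(1-|\lambda|^2)\asymp(1-|\lambda|)^{-1}$ for such $\lambda$. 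On the other side, if $I$ is an arc with $\lambda\in S(I)$ and $1-|\lambda|\asymp|I|$, then on $S(I)$ one has $|1-\bar\lambda z|\lesssim|I|$ while $|1-\overline{\Theta(\lambda)}\Theta(z)|\ge 1-|\Theta(\lambda)|>1-\epsilon$, hence $|k^\Theta_\lambda(z)|\gtrsim|I|^{-1}$ there. Substituting into the embedding inequality gives $|\mu|(S(I))\lesssim|I|$ for every such ``balanced'' arc. The last step is to upgrade this to the full $(\Theta,\epsilon)$-Carleson condition, i.e.\ to all arcs with $S(I)\cap\Omega(\Theta,\epsilon)\ne\emptyset$: this is exactly where one-componentness is used, since for a one-component $\Theta$ the sublevel sets are connected, and the geometry of $\Omega(\Theta,\epsilon)$ established in~\cite{A2} shows that every such window already carries a balanced point of some $\Omega(\Theta,\epsilon')$, with $\epsilon'$ depending only on $\epsilon$ (this also gives the independence on $\epsilon$). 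Finally, the independence of $\CC_p(\Theta)$ on $p$ for one-component $\Theta$ is another, separate ingredient of~\cite{A2}; once it is in hand the case of general $p$ reduces to the case $p=2$ just treated. (For $p<2$ the direct reproducing-kernel estimate above is too crude, which is one reason this reduction is needed.)

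For $(2)\Rightarrow(1)$ and $(4)\Rightarrow(1)$ I would argue by contraposition, and here I expect the real difficulty. Suppose $\Theta$ is \emph{not} one-component, so $\Omega(\Theta,\epsilon)$ is disconnected for every $\epsilon$, and in fact its connected components can be chosen arbitrarily far apart in the pseudohyperbolic metric. For $(2)\Rightarrow(1)$, the goal is to manufacture, for each prescribed $p$ and $\epsilon$, a discrete measure $\mu$ --- a suitably weighted sum of point masses at points where $\Theta$ is small, spread among far-apart components --- lying in $\CC_p(\Theta)$ and yet violating the geometric $(\Theta,\epsilon)$-Carleson condition; since that condition always implies membership in $\CC_p(\Theta)$, the two classes then differ and (2) fails. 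The mechanism is that without one-componentness the functions of $K_\Theta$ cannot be simultaneously large at all those clustered points, so the embedding survives even though the geometric density is unbounded. For $(4)\Rightarrow(1)$ one keeps essentially the same configuration but tunes the weights so that $\mu\in\CC_p(\Theta)\setminus\CC_q(\Theta)$ for a suitable single pair $q\ne p$ (one pair suffices), arranging that the $L^p$- and $L^q$-masses placed on $\mu$ by the pertinent reproducing kernels scale differently; thus $\CC_p(\Theta)$ genuinely varies with $p$ and (4) fails.

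The main obstacle is precisely this last construction --- producing, in the non-one-component case, explicit measures separating the embedding classes --- together with the geometric lemma about connected sublevel sets invoked in $(1)\Rightarrow(3)$. Both rest on Aleksandrov's quantitative control of the components of $\Omega(\Theta,\epsilon)$, and in particular on the analysis of what fails for Blaschke products whose zeros violate the ratio condition~\eqref{eq:condition one-component}; for the details I would follow~\cite{A2}. Everything else reduces to a specialization or to the reproducing-kernel estimate above, powered by the $H^\infty$-norm characterization of one-component inner functions recalled in Section~2.
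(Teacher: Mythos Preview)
The paper does not prove this theorem at all: it is stated with the attribution ``the next theorem is proved in~\cite{A2}'' and no argument is supplied. There is therefore nothing in the paper to compare your proposal against beyond the bare citation.

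Your sketch is a reasonable outline of the logical skeleton (the trivial links $(3)\Rightarrow(2)$ and $(3)\Rightarrow(4)$, the reproducing-kernel test for $(1)\Rightarrow(3)$ at $p=2$, and contrapositive constructions for $(2)\Rightarrow(1)$ and $(4)\Rightarrow(1)$), and you are candid that the substantive steps --- the geometric control of the components of $\Omega(\Theta,\epsilon)$, the $p$-independence for one-component $\Theta$, and the explicit separating measures in the non-one-component case --- are all deferred to~\cite{A2}. In that sense your proposal and the paper end up in the same place: both rely on Aleksandrov for everything nontrivial. One caution: in your $(1)\Rightarrow(3)$ argument you invoke the $p$-independence of $\CC_p(\Theta)$ for one-component $\Theta$ as a ``separate ingredient,'' but that is precisely the implication $(1)\Rightarrow(4)$ of the theorem you are proving, so you should be explicit that you are importing it as an independently established fact from~\cite{A2} rather than deriving it within your cycle.
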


 In particular, if $\Theta$ is one component, then so is $\Theta^2$, whence $\CC_1(\Theta^2)=\CC_2(\Theta^2)=\CC_2(\Theta)$.

Note that a general characterization of $\CC_2(\Theta)$ has recently been obtained in~\cite{LW}; however, the  geometric content of this result is not easy to see.

The question of compactness of the embedding $K^p_\Theta\subset L^p(\mu)$ in this case should be related to a vanishing Carleson condition. In fact, there are two vanishing conditions, introduced in~\cite{CM}. What is called therein the \emph{second vanishing condition} is easier to state.
We say that $\mu$ satisfies the \emph{second $(\Theta, \epsilon)$-vanishing condition}~\cite{Ba, CM} if for each $\eta>0$ there exists $\delta>0$ such that $\mu(S(I))/|I|<\eta$ whenever $|I|<\delta$ and $S(I)\cap \Omega(\Theta, \epsilon)\not=\emptyset$. 
The following result is then proved in~\cite{Ba}.

\begin{theorem}\label{th:baranov compact embeddings}
	If the positive measure $\mu$ satisfies the second $(\Theta, \epsilon)$-vanishing condition, then the embedding $K^p_\Theta\subset L^p(\mu)$ is compact for $1<p<\infty$.
	
	The converse is true in case $\Theta$ is one-component.
\end{theorem}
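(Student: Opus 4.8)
\medskip
\noindent\emph{Proof plan.}
I would treat the two implications separately. For the sufficiency, first note that the second $(\Theta,\epsilon)$-vanishing condition implies the $(\Theta,\epsilon)$-Carleson condition~\eqref{eq:thetacarleson}: the vanishing condition handles arcs with $|I|<\delta_0$, while for $|I|\ge\delta_0$ the ratio $\mu(S(I))/|I|$ is bounded simply because $\mu$ is finite. Hence, by the theorem of Volberg--Treil \cite{VT}, the embedding $J\colon K^p_\Theta\to L^p(\mu)$ is bounded. To prove it is compact, for $\delta>0$ split $\mu=\mu_\delta+\mu^\delta$ with $\mu_\delta=\mu|_{\{1-|z|<\delta\}}$ and $\mu^\delta=\mu|_{\{|z|\le1-\delta\}}$, and write $J=J_\delta+J^\delta$ accordingly. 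The operator $J^\delta$ is compact: a norm-bounded set in $K^p_\Theta\subset H^p$ is uniformly bounded, hence equicontinuous, on the compact set $\{|z|\le1-\delta\}$, so by Arzel\`a--Ascoli a bounded sequence has a subsequence converging uniformly there, and therefore in $L^p(\mu^\delta)$ since $\mu^\delta$ is finite. It thus suffices to show $\|J_\delta\|\to0$ as $\delta\to0$, for then $J$ is an operator-norm limit of compact operators.

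The crux is to prove that for every $\eta>0$ there is $\delta>0$ with $\int_{\{1-|z|<\delta\}}|f|^p\,d\mu\le\eta\|f\|_p^p$ for all $f\in K^p_\Theta$. Choosing $\delta$ from the vanishing condition and applying the quantitative form of the Volberg--Treil estimate --- in which the embedding norm of a measure $\nu$ is dominated by $\sup\{\nu(S(I))/|I|:S(I)\cap\Omega(\Theta,\epsilon)\ne\emptyset\}$ --- to $\nu=\mu_\delta$, one is reduced to bounding that supremum. For an arc $I$ meeting $\Omega(\Theta,\epsilon)$ with $|I|<\delta$ the vanishing condition applies directly; for $|I|\ge\delta$ one covers $S(I)\cap\{1-|z|<\delta\}$ by $O(|I|/\delta)$ boxes of length comparable to $\delta$ and sums. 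This last step is where I expect the real difficulty: among those small boxes only the ones meeting $\Omega(\Theta,\epsilon)$ are controlled by the vanishing condition, whereas near the boundary $\mu$ may still carry substantial mass on boxes that miss $\Omega(\Theta,\epsilon)$ (where $|\Theta|$ is close to $1$), and the embedding bound restricts such mass only through the larger box that does meet $\Omega(\Theta,\epsilon)$. Taming this residual mass is the technical heart of \cite{Ba}: it uses Baranov's refined pointwise (Bernstein-type) estimates for functions in $K^p_\Theta$ together with the internal geometry of the level sets $\Omega(\Theta,\epsilon)$, and yields that the residual contribution is again $O(\eta|I|)$, whence $\|J_\delta\|\to0$.

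For the converse, assume $\Theta$ is one-component; by Theorem~\ref{th:Aleksandrov one component} and the remarks around it none of the conditions depends on $\epsilon$, so $\epsilon$ may be fixed freely. Arguing by contraposition, suppose $\mu$ fails the second $(\Theta,\epsilon)$-vanishing condition, so there are $\eta_0>0$ and arcs $I_n$ with $|I_n|\to0$, $S(I_n)\cap\Omega(\Theta,\epsilon)\ne\emptyset$ and $\mu(S(I_n))\ge\eta_0|I_n|$. For a one-component $\Theta$ the level sets $\Omega(\Theta,\epsilon)$ have controlled, sawtooth-like geometry over $\rho(\Theta)$ \cite{A2}; this lets one choose $z_n\in\Omega(\Theta,\epsilon)$ with $1-|z_n|\asymp|I_n|$ and $z_n/|z_n|$ within a bounded multiple of $I_n$, and then enlarge $I_n$ to a comparable arc $I_n'$ centered at $z_n/|z_n|$ with $S(I_n)\subset S(I_n')$ and $|I_n'|\asymp|I_n|$. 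The test functions are the normalized reproducing kernels $f_n=k^\Theta_{z_n}/\|k^\Theta_{z_n}\|_p$, so $\|f_n\|_p=1$. Because $|\Theta(z_n)|<\epsilon$, one has $|1-\overline{\Theta(z_n)}\Theta(z)|\ge1-\epsilon$ for \emph{every} $z$, hence $|k^\Theta_{z_n}(z)|\gtrsim(1-|z_n|)^{-1}$ on $S(I_n')$ (where $|1-\bar z_n z|\asymp1-|z_n|$); the same lower bound on $\bbT$ gives $\|k^\Theta_{z_n}\|_p^p\gtrsim(1-|z_n|)^{1-p}$, while the one-component estimate $\|k^\Theta_\lambda\|_\infty\le C\,(1-|\Theta(\lambda)|)/(1-|\lambda|)$ of Subsection~2.2 gives the matching upper bound. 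Consequently
\[
\|f_n\|_{L^p(\mu)}^p=\frac{\|k^\Theta_{z_n}\|_{L^p(\mu)}^p}{\|k^\Theta_{z_n}\|_p^p}\gtrsim\frac{(1-|z_n|)^{-p}\,\mu(S(I_n'))}{(1-|z_n|)^{1-p}}\ \ge\ \frac{\mu(S(I_n))}{1-|z_n|}\ \gtrsim\ \eta_0.
\]
On the other hand, for each fixed $w\in\bbD$, $|f_n(w)|\le2(1-|w|)^{-1}/\|k^\Theta_{z_n}\|_p\to0$ because $\|k^\Theta_{z_n}\|_p\to\infty$; a norm-bounded sequence in $K^p_\Theta$ with $1<p<\infty$ that converges pointwise on $\bbD$ converges weakly (by reflexivity and boundedness of the point evaluations), so $f_n\to0$ weakly. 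Compactness of the embedding would then force $\|f_n\|_{L^p(\mu)}\to0$, contradicting the display. Hence $\mu$ satisfies the second $(\Theta,\epsilon)$-vanishing condition, which completes the converse.
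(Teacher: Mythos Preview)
The paper does not prove this theorem; it is a survey and simply quotes the result from \cite{Ba}. So there is no in-paper argument to compare against, and what follows is an assessment of your sketch on its own terms.

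There is a structural gap in your sufficiency argument. In this paper the classes $\CC_p(\Theta)$ and $\VV_p(\Theta)$ are defined for measures on $\bbT$ (see the beginning of Subsection~\ref{subsection-carl-model-space}), not on $\overline{\bbD}$. For such $\mu$ your radial split $\mu=\mu_\delta+\mu^\delta$ with $\mu^\delta=\mu|_{\{|z|\le 1-\delta\}}$ is vacuous: $\mu^\delta=0$ and $\mu_\delta=\mu$ for every $\delta>0$, so the Arzel\`a--Ascoli step never engages and the decomposition $J=J_\delta+J^\delta$ collapses to $J=J_\delta$. The genuinely useful ``small parameter'' here is not $1-|z|$ but distance to $\rho(\Theta)$ (compare $H_\delta$ in~\eqref{eq:definition of H_delta} and the first vanishing condition), or---as in \cite{Ba}---one bypasses any crude splitting and controls the embedding directly via Bernstein-type pointwise bounds on $K^p_\Theta$-functions. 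You correctly name that as the technical heart and defer it to \cite{Ba}; the issue is that the scaffolding you erect around it does no work for boundary measures and would need to be rebuilt.

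Your contrapositive argument for the converse under the one-component hypothesis is the standard one and is essentially sound: normalized reproducing kernels $k^\Theta_{z_n}/\|k^\Theta_{z_n}\|_p$ with $z_n\in\Omega(\Theta,\epsilon)$ and $1-|z_n|\asymp|I_n|$ form a weakly null sequence whose $L^p(\mu)$-norms stay bounded below, and the one-component estimate on $\|k^\Theta_\lambda\|_\infty$ is exactly what gives the matching upper bound $\|k^\Theta_{z_n}\|_p^p\lesssim(1-|z_n|)^{1-p}$ needed to close the inequalities.
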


In other words, the theorem thus states that positive measures that satisfy the second vanishing condition are in $\VV_p^+(\Theta)$ for all $1<p<\infty$, and the converse is true for $\Theta$ one-component.

To discuss the case $p=1$, we have to introduce what is called in~\cite{CM}  the \emph{first vanishing condition}.
Let us  call the supremum in~\eqref{eq:thetacarleson}
the $(\Theta, \epsilon)$-Carleson constant of $\mu$. 
Define 
\begin{equation}\label{eq:definition of H_delta}
H_\delta= \{z\in \overline{\bbD}: \dist(z,\rho(\Theta))<\delta\},
\end{equation}
and $\mu_\delta(A)=\mu(A\cap H_\delta)$. Then $\mu_\delta$ are also $\Theta$-Carleson measures, with $(\Theta, \epsilon)$-Carleson constants decreasing when $\delta$ decreases. We say that $\mu$ \emph{satisfies the first $(\Theta, \epsilon)$-vanishing condition} if these Carleson constants tend to~0 when $\delta\to 0$.

It is shown in~\cite{Ba} that the first vanishing condition implies the second, and that the converse is not true: there exist measures which satisfy the second vanishing condition but not the first.

 The next theorem is proved in~\cite{CM}.
 
 \begin{theorem}\label{th:cima-matheson}
 	If a positive measure  $\mu$ satisfies the first $(\Theta, \epsilon)$-vanishing condition, then $\mu\in \VV^+_p(\Theta)$ for $1\leq p<\infty$.
 \end{theorem}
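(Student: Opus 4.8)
The plan is to split $\mu$ by means of the decomposition $\mu=\mu_\delta+\nu_\delta$, $\nu_\delta:=\mu-\mu_\delta$, that is already built into the definition of the first vanishing condition, and to show that for small $\delta$ the measure $\mu_\delta$ produces an embedding of small norm while $\nu_\delta$ produces a compact embedding; an approximation argument will then finish the proof. Write $\iota_\sigma\colon K_\Theta^p\to L^p(\sigma)$ for the embedding attached to a positive measure $\sigma$. Since $\mu$ is in particular a $\Theta$-Carleson measure (this is implicit in the hypothesis, for otherwise the first vanishing condition is not even meaningful) and $\mu_\delta,\nu_\delta\le\mu$, all three of $\iota_\mu,\iota_{\mu_\delta},\iota_{\nu_\delta}$ are bounded by the Treil--Volberg theorem. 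Moreover $\mu_\delta$ is carried by $H_\delta$ and $\nu_\delta$ by $\overline{\bbD}\setminus H_\delta$, hence they are mutually singular, $L^p(\mu)$ is the $\ell^p$-direct sum $L^p(\mu_\delta)\oplus L^p(\nu_\delta)$, and under this identification $\iota_\mu f$ corresponds to $(\iota_{\mu_\delta}f,\iota_{\nu_\delta}f)$. Setting $A_\delta f:=(\iota_{\mu_\delta}f,0)$ and $B_\delta f:=(0,\iota_{\nu_\delta}f)$, we have $\iota_\mu=A_\delta+B_\delta$ with $\|A_\delta\|=\|\iota_{\mu_\delta}\|$, while $B_\delta$ is compact as soon as $\iota_{\nu_\delta}$ is. So it is enough to show that $\|\iota_{\mu_\delta}\|\to0$ as $\delta\to0$ and that $\iota_{\nu_\delta}$ is compact for every $\delta>0$.

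For the first statement I would invoke the quantitative form of the Treil--Volberg estimate: the proof that the $(\Theta,\epsilon)$-Carleson condition implies the embedding $K_\Theta^p\hookrightarrow L^p(\sigma)$ in fact delivers, for every positive $\Theta$-Carleson measure $\sigma$, an inequality
\[
\int_{\overline{\bbD}}|f|^p\,d\sigma\le C_{p,\epsilon}\,\kappa(\sigma)\,\|f\|_{K_\Theta^p}^p\qquad(f\in K_\Theta^p),
\]
where $\kappa(\sigma)$ is the $(\Theta,\epsilon)$-Carleson constant of $\sigma$ and $C_{p,\epsilon}$ does not depend on $\sigma$ (it comes from a covering/maximal-function argument). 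Applied to $\sigma=\mu_\delta$ this gives $\|\iota_{\mu_\delta}\|\le\bigl(C_{p,\epsilon}\,\kappa(\mu_\delta)\bigr)^{1/p}$, and the first $(\Theta,\epsilon)$-vanishing condition is exactly the assertion that $\kappa(\mu_\delta)\to0$ as $\delta\to0$; hence $\|\iota_{\mu_\delta}\|\to0$.

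For the compactness of $\iota_{\nu_\delta}$ the key observation is that $\nu_\delta$ is carried by the compact set $F_\delta:=\{z\in\overline{\bbD}:\dist(z,\rho(\Theta))\ge\delta\}$, which is disjoint from $\rho(\Theta)$ and therefore contained in $\bbD\cup(\bbT\setminus\rho(\Theta))$. It is classical that $\Theta$, and hence — through the reflection underlying $C_\Theta f=\Theta\bar z\bar f$ — every function of $K_\Theta^p$, continues holomorphically to a fixed open set $W$ with $W\cap\bbT=\bbT\setminus\rho(\Theta)$, and that the resulting inclusion of $K_\Theta^p$ into the holomorphic functions on $W$ is continuous for the topology of uniform convergence on compact subsets. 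As $F_\delta$ is a compact subset of $W$, every norm-bounded sequence $(f_n)$ in $K_\Theta^p$ is a normal family on $W$; passing to a subsequence, $f_{n_k}$ converges uniformly on $F_\delta$ to some holomorphic $g$, and then, $\nu_\delta$ being finite and carried by $F_\delta$,
\[
\|\iota_{\nu_\delta}f_{n_k}-g|_{F_\delta}\|_{L^p(\nu_\delta)}^p\le\nu_\delta(\overline{\bbD})\,\sup_{F_\delta}|f_{n_k}-g|^p\longrightarrow0 .
\]
Thus $(\iota_{\nu_\delta}f_{n_k})$ converges in $L^p(\nu_\delta)$, which proves that $\iota_{\nu_\delta}$, and with it $B_\delta$, is compact.

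Assembling the two parts, for every $\delta>0$ we have written $\iota_\mu=A_\delta+B_\delta$ with $B_\delta$ compact and $\|A_\delta\|\to0$ as $\delta\to0$; since the compact operators form a norm-closed subspace of the bounded operators, $\iota_\mu$ itself is compact, that is, $\mu\in\VV^+_p(\Theta)$. I expect the one genuinely nontrivial point to be the quantitative Treil--Volberg inequality used on $\mu_\delta$: what is needed is control of the embedding \emph{norm}, not merely of its finiteness, by the Carleson constant, uniformly over the family $\{\mu_\delta\}_{\delta>0}$. For $1<p<\infty$ this is contained in the argument of~\cite{VT}; the case $p=1$ requires the corresponding $p=1$ embedding estimate, after which the remainder of the proof goes through verbatim. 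The continuation statement for $K_\Theta^p$ invoked in the third paragraph is classical and can be found among the references of Section~2.
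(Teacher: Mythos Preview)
The paper does not supply a proof of this theorem; it merely records it as a result of Cima and Matheson~\cite{CM}. Your argument is correct and is in fact the standard one, essentially the approach of~\cite{CM}: decompose $\mu=\mu_\delta+\nu_\delta$, control $\|\iota_{\mu_\delta}\|$ by the $(\Theta,\epsilon)$-Carleson constant of $\mu_\delta$ via the quantitative embedding estimate, and use analytic continuation of $K_\Theta^p$ across $\bbT\setminus\rho(\Theta)$ to see that $\iota_{\nu_\delta}$ is compact. You have correctly identified the one point that needs care, namely the uniform (in $\delta$) bound $\|\iota_{\mu_\delta}\|^p\le C_{p,\epsilon}\,\kappa(\mu_\delta)$; this is indeed what the proofs in~\cite{VT} and~\cite{AI23,A2} actually give, for all $1\le p<\infty$, so there is no gap.
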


 In case $\mu\in \CC_p(\Theta)$, we will denote by $\iota_{\mu,p}:K^p_\Theta\to L^p(|\mu|)$ the embedding (which is then known to be a bounded operator). Then $\mu\in \VV_p(\Theta)$ means that $\iota_{\mu,p}$ is compact. We will also write $\iota_\mu$ instead of $\iota_{\mu,2}$.

\section{Truncated Toeplitz operators}	\label{se:TTO} 
	 
	 Let $\Theta$ be an inner function and $\phi\in L^2$. The truncated Toeplitz operator $A_\phi=A_\phi^\Theta$, introduced by Sarason in \cite{Sa}, will be a densely defined, possibly unbounded operator on $K_\Theta$. Its domain is $K_\Theta\cap H^\infty$, on which it acts by the formula
	 $$
	 A_\phi f=P_\Theta(\phi f),\qquad f\in K_\Theta\cap H^\infty.
	 $$
	 If $A_\phi$ thus defined extends to a bounded operator, that operator is called a TTO. The class of all TTOs on $K_\Theta$ is denoted by $\TT(\Theta)$, and the class of all nonnegative TTO's on $K_\Theta$ is denoted by $\TT(\Theta)^+$. 

Although these operators are called truncated \emph{Toeplitz}, they have more in common with \emph{Hankel} operators $H_\phi$, or rather with their cognates $\Gamma_\phi$, which act on a single space. 
	 As a first example of this behavior, we note that the symbol of a truncated Toeplitz operators is not unique.  It is proved in \cite{Sa} that 
	 \begin{equation}\label{eq:symbols zero}
	 	 A_{\phi_1}=A_{\phi_2}\Longleftrightarrow \phi_1-\phi_2\in \Theta H^2+\overline{\Theta H^2}.
	 \end{equation}
	 Let us denote $\mathfrak{S}_\Theta=L^2\ominus (\Theta H^2+\overline{\Theta H^2})$; it is called the space of \emph{standard} symbols. It follows from~\eqref{eq:symbols zero} that every TTO has a unique standard symbol. One proves in~\cite[Section 3]{Sa} that $\mathfrak{S}$ is contained in $K_\Theta+\overline{K_\Theta}$ as a subspace of codimension at most one; this last space is sometimes easier to work with.

	 	It is often the case that the assumption $\Theta(0)=0$ simplifies certain calculations. For instance, in that case we have precisely $\mathfrak{S}=K_\Theta+\overline{K_\Theta}$; we will see another example in Section~\ref{se:TTOs in ideals}. Fortunately, there is a procedure to pass from a general inner $\Theta$ to one that has this property: it is called the \emph{Crofoot transform}. For $a\in\bbD$ let $\Theta_a$ be given by the formula
	 	\[
	 	\Theta_a(z)= \frac{\Theta(z)-a}{1-\bar a\Theta(z)}.
	 	\]
	 	If we define the Crofoot transform by 
	 	\[
	 	J(f):= \frac{\sqrt{1-|a|^2}}{1-\bar a\Theta}f,
	 	\]
	 	then $J$ is a unitary operator from $K_\Theta$ to $K_{\Theta_a}$, and
	 	\begin{equation}\label{eq:crofoot}
	 	J\TT(\Theta)J^*=\TT(\Theta_a).
	 	\end{equation}
	 	  In particular, if $a=\Theta(0)$, then $\Theta_a(0)=0$, and~\eqref{eq:crofoot} allows the transfer of properties from TTOs on $K_{\Theta_a}$ to TTOs on $K_\Theta$.

Especially nice properties are exhibited by TTOs which have an analytic symbol $\phi\in H^2$ (of course, this is never a standard symbol). It is a consequence of interpolation theory~\cite{SaGint} that 
\[
\{A^\Theta_\phi\in \TT(\Theta): \phi\in H^2 \}= \{A^\Theta_z\}'
\] 
($A^\Theta_z$ is called a \emph{compressed shift}, or a \emph{model operator}).	 

One should also mentioned that other two classes of TTOs  have already been studied in different contexts. First, the classical finite Toeplitz matrices are TTOs with $\Theta(z)=z^n$ written in the  basis of monomials. Secondly, TTOs with $\Theta(z)=e^{\frac{z+1}{z-1}}$ correspond, after some standard transformations,  to a class of operators alternately called Toeplitz operators on Paley--Wiener spaces~\cite{RR}, or truncated Wiener--Hopf operators~\cite{BGS}. 
	 
	 There is an alternate manner to introduce TTOs, related to the Carleson measures in the previous section. For every $\mu\in \CC_2(\Theta)$ the sesquilinear form
	 \[
	 (f, g)\mapsto \int f \bar g \, d\mu
	 \]
	 is bounded, and therefore there exists a bounded operator $A^\Theta_\mu$ on $K_\Theta$ such that
	 \begin{equation}\label{eq:tto by measure}
	 	  \<A^\Theta_\mu f, g\>= \int f \bar g \, d\mu.
	 \end{equation}
	 
	It is shown in~\cite[Theorem 9.1]{Sa} that $A^\Theta_\mu$ thus defined is actually a TTO. In fact, the converse is also true, as stated in Theorem~\ref{th:main theorem BBK} below.  An interesting open question is the characterization of the measures $\mu$ for which $A_\mu=0$.

The definition of TTOs does not make precise the class of symbols  $\phi\in L^2$ that produce  bounded TTOs. 
A first remark is that the standard symbol of a bounded truncated Toeplitz operator is not necessarily bounded. To give an example, consider an inner function $\Theta$ with $\Theta(0)=0$, for which there exists a singular point $\zeta\in\bbT$ where $\Theta$ has an angular derivative in the sense of Caratheodory. It is shown then in~\cite[Section 5]{Sa} that $k_\zeta^\Theta\otimes k_\zeta^\Theta $ is a bounded rank one TTO with standard symbol $k_\zeta^\Theta+ \overline{k_\zeta^\Theta}-1$, and that this last function is unbounded.

A natural question is therefore whether every bounded TTO has a bounded symbol (such as is the case with Hankel operators). In the case of $T_\phi$ with $\phi$ analytic, the answer is readily seen to be positive, being proved again in~\cite{SaGint}; moreover, 
\[
\inf\{\|\psi\|_\infty: \psi\in H^\infty, \ A^\Theta_\psi=A^\Theta_\phi \}=\|A^\Theta_\phi\|.
\]

The first  negative answer for the general situation has been  provided in~\cite{MR2679022}, and the counterexample is again given by the rank one TTO $k_\zeta^\Theta\otimes k_\zeta^\Theta $.  The following result is  proved in~\cite{MR2679022}.

\begin{theorem}\label{th:counterex BCFMT}
	Suppose $\Theta$ has an angular derivative in the sense of Caratheodory in $\zeta\in\bbT$ (equivalently, $k^\Theta_\zeta\in L^2$), but $k^\Theta_\zeta\not\in L^p$ for some $p\in(2,\infty)$. Then $k^\Theta_\zeta\otimes k^\Theta_\zeta$ has no bounded symbol.
\end{theorem}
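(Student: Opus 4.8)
The plan is to argue by contradiction, using that a \emph{bounded} symbol $\psi$ produces a truncated Toeplitz operator which is bounded not only on $K_\Theta$ but on every $K^q_\Theta$ with $1<q<\infty$, whereas the rank one operator $k^\Theta_\zeta\otimes k^\Theta_\zeta$ cannot have this property once $k^\Theta_\zeta\notin L^p$.

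So assume $\psi\in L^\infty$ satisfies $A^\Theta_\psi=k^\Theta_\zeta\otimes k^\Theta_\zeta$, fix the exponent $p\in(2,\infty)$ with $k^\Theta_\zeta\notin L^p$, and set $q=p/(p-1)\in(1,2)$. Since $M_\psi$ is bounded on $L^q$ and, by the identity $P_\Theta=P_+-\Theta P_+\bar\Theta$, the projection $P_\Theta$ is bounded from $L^q$ onto $K^q_\Theta$, the operator $B:=P_\Theta M_\psi$ restricts to a bounded operator on $K^q_\Theta$. For $f\in K_\Theta\cap H^\infty$ one has
\[
Bf=P_\Theta(\psi f)=A^\Theta_\psi f=\langle f,k^\Theta_\zeta\rangle\,k^\Theta_\zeta=f(\zeta)\,k^\Theta_\zeta,
\]
the last equality because the angular derivative hypothesis makes $k^\Theta_\zeta$ the reproducing kernel of $K_\Theta$ at $\zeta$. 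Since $k^\Theta_\zeta\in K_\Theta\subset L^q$ is nonzero, $0<\|k^\Theta_\zeta\|_q<\infty$, and therefore
\[
|f(\zeta)|\le \frac{\|B\|}{\|k^\Theta_\zeta\|_q}\,\|f\|_q\qquad(f\in K_\Theta\cap H^\infty).
\]
In other words, point evaluation at $\zeta$ is a bounded linear functional on the subspace $K_\Theta\cap H^\infty$ of $K^q_\Theta$ (note $K_\Theta\cap H^\infty\subset K^2_\Theta\subset K^q_\Theta$ since $q<2$); extend it by Hahn--Banach to a bounded functional $\Lambda$ on $K^q_\Theta$.

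It remains to identify $\Lambda$. Composing it with the bounded map $P_\Theta\colon L^q\to K^q_\Theta$ and using $(L^q)^*=L^p$, we obtain $g\in L^p$ with $\Lambda(P_\Theta u)=\int_\bbT u\,\bar g\,dm$ for all $u\in L^q$; since $P_\Theta f=f$ for $f\in K_\Theta$, putting $u=f\in K_\Theta\cap H^\infty$ yields $f(\zeta)=\int_\bbT f\,\bar g\,dm=\langle f,g\rangle$. As $p>2$ we have $g\in L^p\subset L^2$, so combining this with $f(\zeta)=\langle f,k^\Theta_\zeta\rangle$ and the self-adjointness of $P_\Theta$ on $L^2$ gives $\langle f,\,P_\Theta g-k^\Theta_\zeta\rangle=0$ for every $f\in K_\Theta\cap H^\infty$. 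That subspace is dense in $K_\Theta$ — it contains all the reproducing kernels $k^\Theta_\lambda$, $\lambda\in\bbD$, which lie in $H^\infty$ — so $P_\Theta g=k^\Theta_\zeta$. But $P_\Theta$ is bounded on $L^p$, hence $P_\Theta g\in K^p_\Theta\subset L^p$, and thus $k^\Theta_\zeta\in L^p$, contradicting the hypothesis.

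The routine ingredients are the boundedness of $P_\Theta$ on $L^q$ and on $L^p$ (recalled in Section~2) and the density of $K_\Theta\cap H^\infty$ in $K_\Theta$. The only substantive step — the main obstacle, such as it is — is the observation in the second paragraph: a bounded symbol forces $k^\Theta_\zeta\otimes k^\Theta_\zeta$ to act boundedly on $K^q_\Theta$ for $q<2$, and hence forces $\zeta$-evaluation to be $L^q$-bounded on $K_\Theta\cap H^\infty$; everything afterwards is a soft duality computation recovering $k^\Theta_\zeta$ as a member of $K^p_\Theta\cong(K^q_\Theta)^*$. One could shorten the third paragraph by instead quoting the known duality $(K^q_\Theta)^*\cong K^{q'}_\Theta$ under the $L^2$ pairing.
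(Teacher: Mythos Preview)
The survey does not actually prove this theorem; it only states it and cites \cite{MR2679022}. Your argument is correct and is precisely the strategy used in that reference: a bounded symbol forces $A^\Theta_\psi$ to act boundedly on every $K^q_\Theta$ with $1<q<\infty$, and then the duality $(K^q_\Theta)^*\cong K^p_\Theta$ (which you reconstruct explicitly via Hahn--Banach and $P_\Theta$) yields $k^\Theta_\zeta\in K^p_\Theta$, a contradiction. The only cosmetic difference is that the original paper phrases the last step directly through the known $K^q_\Theta$--$K^p_\Theta$ duality rather than unwinding it by hand, exactly as you observe in your closing remark.
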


A more general result has been obtain in~\cite{BBK}, where one also makes clear the relation between measures and TTO. In particular, one characterizes the inner functions $\Theta$ which have the property that every bounded TTO on $K_\Theta$ has a bounded symbol.

\begin{theorem}\label{th:main theorem BBK}
Suppose $\Theta$ is an inner function.
\begin{enumerate}
	\item For every  bounded TTO $A\ge 0$ there exists a positive measure $\mu\in \CC^+_2(\Theta)$ such that $A=A^\Theta_\mu$.
	
	\item For every bounded  $A\in\TT(\Theta)$ there exists a complex measure $\mu\in \CC_2(\Theta)$ such that $A=A^\Theta_\mu$.
	
	\item A bounded TTO $A\in\TT(\Theta)$ admits a bounded symbol if and only if $A=A^\Theta_\mu$ for some $\mu\in \CC_1(\Theta^2)$.
	
	\item Every bounded TTO on $K_\Theta$ admits a bounded symbol if and only if $\CC_1(\Theta^2)=\CC_2(\Theta^2)$.
\end{enumerate}	
\end{theorem}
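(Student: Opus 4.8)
The plan is to establish part (1) first --- this is the substantive point --- and then to deduce (2), (3) and (4) by softer arguments. Two preliminary remarks will be used throughout. First, it is convenient (via the Crofoot transform~\eqref{eq:crofoot}, which carries positive TTOs to positive TTOs, and which turns a representation $A=A^\Theta_\nu$ into one $JAJ^*=A^{\Theta_a}_{\nu'}$ with $\nu'$ and $\nu$ mutually absolutely continuous with densities bounded above and below on $\bbT$) to assume $\Theta(0)=0$, so that $\1\in K_\Theta$ and $\mathfrak{S}_\Theta=K_\Theta+\overline{K_\Theta}$. Second, for $f,g\in K_\Theta$ one has by~\eqref{eq:conjugation} the identity $f\bar g=z\overline{\Theta}\,(f\,C_\Theta g)$, where $f\,C_\Theta g\in K^1_{\Theta^2}$ with $\|f\,C_\Theta g\|_1\le\|f\|_2\|g\|_2$; hence, for a finite measure $\nu$, $\<A^\Theta_\nu f,g\>=\int z\overline{\Theta}\,(f\,C_\Theta g)\,d\nu$, so that $A^\Theta_\nu$ is entirely governed by the linear functional $p\mapsto\int z\overline{\Theta}\,p\,d\nu$ on $K^1_{\Theta^2}$, and that functional is $\|\cdot\|_1$-bounded whenever $\nu\in\CC_1(\Theta^2)$.

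For (1), given a bounded $A\ge0$ in $\TT(\Theta)$, the idea is to produce $\mu\in\CC^+_2(\Theta)$ with $A=A^\Theta_\mu$ by approximating $\Theta$ from within. Suppose first $\Theta$ is a Blaschke product; list its zeros as $(a_n)_{n\ge1}$ with $a_1=0$, and let $\Theta_n$ be the finite Blaschke product with zeros $a_1,\dots,a_n$, so that $K_{\Theta_n}\subset K_{\Theta_{n+1}}\subset K_\Theta$ are nested finite-dimensional subspaces of $K_\Theta\cap H^\infty$ whose union is dense in $K_\Theta$. The compression $A_n:=P_{\Theta_n}A|_{K_{\Theta_n}}$ is again a TTO --- Sarason's commutation relation~\cite{Sa} characterizing TTOs is inherited from $A$ (if $zf,zg\in K_\Theta$ then $\<A(zf),zg\>=\<Af,g\>$, because for a symbol $\phi$, $\<\phi zf,zg\>=\<\phi f,g\>$; the same then holds inside $K_{\Theta_n}$) --- and $A_n\ge0$ with $\|A_n\|\le\|A\|$. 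On the finite-dimensional $K_{\Theta_n}$ the assertion is classical, being the Carath\'eodory--Toeplitz theorem (solvability of a finite trigonometric moment problem), so there is a positive measure $\mu_n$ on $\bbT$ with $A_n=A^{\Theta_n}_{\mu_n}$ and $\mu_n(\bbT)=\<A_n\1,\1\>\le\|A\|$. Extracting a weak-$*$ convergent subsequence $\mu_n\to\mu$ and passing to the limit in $\<Af,g\>=\<A_nf,g\>=\int f\bar g\,d\mu_n$ for fixed $f,g\in\bigcup_m(K_{\Theta_m}\cap H^\infty)$, one gets $\<Af,g\>=\int f\bar g\,d\mu$ on a dense subspace; taking $g=f$ gives $\int|f|^2\,d\mu\le\|A\|\,\|f\|_2^2$, so $\mu\in\CC^+_2(\Theta)$ and $A=A^\Theta_\mu$. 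For a general inner $\Theta=BS_\sigma$ one runs the same scheme through the divisors $B_nS_{\sigma_n}$ with $B_n$ the finite Blaschke subproducts of $B$ and $\sigma_n\uparrow\sigma$ finitely atomic; the new building blocks --- the elementary singular inner functions --- are treated by the corresponding classical theory (positive truncated Wiener--Hopf operators, equivalently positive Toeplitz operators on Paley--Wiener spaces), and the uniform mass bound together with weak-$*$ compactness finishes the argument. I expect this approximation --- in particular ensuring that the compressions remain genuine TTOs and controlling the limit when $\Theta$ has a singular part --- to be the main obstacle; this is the genuinely new ingredient, carried out in~\cite{BBK}.

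Part (2) is then a formality: write $A=\mathrm{Re}\,A+i\,\mathrm{Im}\,A$ with $\mathrm{Re}\,A=\tfrac{1}{2}(A+A^*)$ and $\mathrm{Im}\,A=\tfrac{1}{2i}(A-A^*)$; both summands are self-adjoint bounded TTOs ($\TT(\Theta)$ being closed under adjoints, $A_\phi^*=A_{\bar\phi}$, and under linear combinations), and a self-adjoint bounded TTO $B$ equals $(B+\|B\|I)-\|B\|I$, a difference of two positive TTOs (with $I=A^\Theta_{\1}=A^\Theta_{m}$, $m$ Lebesgue measure); applying (1) to the summands and recombining yields $A=A^\Theta_\mu$ for a complex $\mu\in\CC_2(\Theta)$. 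For (3), ``only if'' is trivial: if $A=A^\Theta_\psi$ with $\psi\in L^\infty$, then $\mu:=\psi\,dm$ satisfies $A=A^\Theta_\mu$ and $\int|p|\,d|\mu|\le\|\psi\|_\infty\|p\|_1$, so $\mu\in\CC_1(\Theta^2)$. For ``if'', let $\mu\in\CC_1(\Theta^2)$; by the second preliminary remark the functional $\ell(p)=\int z\overline{\Theta}\,p\,d\mu$ on $K^1_{\Theta^2}$ obeys $|\ell(p)|\le\int|p|\,d|\mu|\le C\|p\|_1$, hence by Hahn--Banach extends to $p\mapsto\int\rho\,p\,dm$ for some $\rho\in L^\infty$; then $\psi:=\overline{z}\Theta\,\rho\in L^\infty$ is a bounded symbol of $A^\Theta_\mu$, as one sees by substituting $f\bar g=z\overline{\Theta}\,p$ into $\int\psi f\bar g\,dm$.

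Finally, part (4). ``If'': assuming $\CC_1(\Theta^2)=\CC_2(\Theta^2)$ and recalling $\CC_2(\Theta^2)=\CC_2(\Theta)$, every bounded TTO is by (2) equal to $A^\Theta_\mu$ with $\mu\in\CC_2(\Theta)=\CC_1(\Theta^2)$, hence has a bounded symbol by (3). ``Only if'': since $\CC_1(\Theta^2)\subseteq\CC_2(\Theta^2)$ always, it suffices to show $\CC_2(\Theta^2)\subseteq\CC_1(\Theta^2)$. By hypothesis the bounded linear map $\psi\mapsto A^\Theta_\psi$ from $L^\infty$ onto the (closed) subspace $\TT(\Theta)\subset\mathcal{B}(K_\Theta)$ is surjective, so the open mapping theorem gives a constant $C$ with: every bounded TTO has a symbol of $L^\infty$-norm at most $C$ times its operator norm. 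Fix a positive $\mu\in\CC_2(\Theta^2)=\CC_2(\Theta)$ and let $E$ be the norm of $K_\Theta\hookrightarrow L^2(\mu)$. For each unimodular Borel $u$ on $\bbT$ the measure $u\,d\mu$ has total variation $\mu$, hence lies in $\CC_2(\Theta)$, so $A^\Theta_{u\,d\mu}$ is a bounded TTO with $\|A^\Theta_{u\,d\mu}\|\le E^2$ uniformly in $u$, and therefore has a symbol $\psi_u$ with $\|\psi_u\|_\infty\le CE^2$; for any bounded product $p=f\,C_\Theta g$ ($f,g\in K_\Theta\cap H^\infty$), $\bigl|\int z\overline{\Theta}\,u\,p\,d\mu\bigr|=\bigl|\<A^\Theta_{u\,d\mu}f,g\>\bigr|=\bigl|\int\psi_u\,z\overline{\Theta}\,p\,dm\bigr|\le CE^2\|p\|_1$. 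Choosing $u$ with $z\overline{\Theta}\,u\,p=|p|$ on $\bbT$ then gives $\int|p|\,d\mu\le CE^2\|p\|_1$ for all such $p$. A density argument --- the bounded products $f\,C_\Theta g$ span a subspace of finite codimension in $K^1_{\Theta^2}$, and since $\mu\in\CC_2(\Theta)$ their $H^\infty$-approximants converge simultaneously in $L^1(m)$ and in $L^1(\mu)$ --- upgrades this to $\int|h|\,d|\mu|\le C'\|h\|_1$ for all $h\in K^1_{\Theta^2}$, i.e.\ $\mu\in\CC_1(\Theta^2)$; the complex case follows by polar decomposition of $\mu$. This last density step is the only mildly delicate point here.
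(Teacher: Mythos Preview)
The paper itself does not prove Theorem~\ref{th:main theorem BBK}; this is a survey, and the theorem is simply quoted from Baranov--Bessonov--Kapustin~\cite{BBK} without argument. There is therefore no proof in the paper to compare against; what follows assesses your sketch on its own merits.

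Parts (2), (3), and the ``if'' half of (4) are correct and clean. For (1), approximation through finite divisors together with weak-$*$ compactness is a reasonable programme, and you honestly flag the passage to general inner $\Theta$ as the substantive step carried out in~\cite{BBK}. One small quibble: ``Carath\'eodory--Toeplitz'' is literally the case $\Theta_n=z^n$; for a general finite Blaschke product $\Theta_n$ the fact that every positive TTO on $K_{\Theta_n}$ is $A^{\Theta_n}_\mu$ for some positive $\mu$ on $\bbT$ is true but needs a couple of extra lines (for instance, identify the extreme rays of the cone $\TT(\Theta_n)^+$ with the rank-one operators $k^{\Theta_n}_\zeta\otimes k^{\Theta_n}_\zeta$, $\zeta\in\bbT$, or go through Clark measures).

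The real gap is in your ``only if'' half of (4). The inequality $\int|p|\,d\mu\le CE^2\|p\|_1$ is obtained only for \emph{single products} $p=f\,C_\Theta g$. Since $p\mapsto\int|p|\,d\mu$ is not linear, the fact that such products span a large subspace of $K^1_{\Theta^2}$ is of no help --- the bound does not transfer to linear combinations, let alone to their closure. And your appeal to $\mu\in\CC_2(\Theta)$ for simultaneous convergence ``in $L^1(m)$ and in $L^1(\mu)$'' is circular: membership in $\CC_2(\Theta)$ gives $L^2(\mu)$-control on $K^2_\Theta$, which does let you pass from bounded products to arbitrary products $fg$ with $f,g\in K^2_\Theta$, but passing from those to an arbitrary $h\in K^1_{\Theta^2}$ in $L^1(\mu)$ is exactly the boundedness of $K^1_{\Theta^2}\hookrightarrow L^1(\mu)$, i.e.\ $\mu\in\CC_1(\Theta^2)$, which is what you are trying to prove. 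What actually closes the argument --- and this is supplied in~\cite{BBK} --- is a factorization lemma: every $h\in K^1_{\Theta^2}$ can be written as a product $fg$ (or a sum of boundedly many such) with $f,g\in K^2_\Theta$ and $\|f\|_2\|g\|_2$ controlled by $\|h\|_1$. With that in hand your steps (1)--(5) already yield $\int|h|\,d\mu\le C'\|h\|_1$ for every $h$, and no density argument is needed.
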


In particular, as shown by Theorem~\ref{th:Aleksandrov one component}, the second condition is satisfied if $\Theta$ is one-component (since then all classes $\CC_p(\Theta)$ coincide). It is still an open question whether $\Theta$ one-component is actually equivalent to $\CC_1(\Theta^2)=\CC_2(\Theta^2)$. (As mentioned previously, $\Theta$ is one-component if and only if $\Theta^2$ is one-component.) Such a result would be a significant strengthening of Theorem~\ref{th:Aleksandrov one component}.

As a general observation, one may say that, if $\Theta$ is one-component, then TTOs on $K_\Theta$ have many properties analogous to those of Hankel operators. This is the class of inner functions for which the current theory is more developed.

\section{Compact operators}

Surprisingly enough, the first result about compactness of TTOs dates from 1970. In~\cite[Section 5]{AC} one introduces what are, in our terminology, TTOs with continuous symbol, and one proves the following theorem.

\begin{theorem}\label{th:tto with continuous symbols}
	If $\Theta$ is inner and $\phi$ is continuous on $\bbT$, then  $A^\Theta_\phi$ is compact if and only if $\phi|\rho(\Theta)=0$.
\end{theorem}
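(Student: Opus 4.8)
The plan is to prove both implications separately. For the easy direction, suppose $A_\phi^\Theta$ is compact and $\zeta_0\in\rho(\Theta)$; I want to show $\phi(\zeta_0)=0$. Since $\zeta_0\in\rho(\Theta)=\Sp(\Theta)\cap\bbT$, there exists a sequence $\lambda_n\in\bbD$ with $\lambda_n\to\zeta_0$ and $\Theta(\lambda_n)\to 0$. Consider the normalized reproducing kernels $\widehat k_{\lambda_n}^\Theta=k_{\lambda_n}^\Theta/\|k_{\lambda_n}^\Theta\|$. A standard computation gives $\|k_\lambda^\Theta\|^2=\frac{1-|\Theta(\lambda)|^2}{1-|\lambda|^2}$, so along our sequence this behaves like $(1-|\lambda_n|^2)^{-1}$. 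The key point is that $\widehat k_{\lambda_n}^\Theta\to 0$ weakly in $K_\Theta$ (this follows because $|\langle \widehat k_{\lambda_n}^\Theta, f\rangle|=|f(\lambda_n)|/\|k_{\lambda_n}^\Theta\|\to 0$ for each fixed $f\in K_\Theta$, since $|f(\lambda_n)|$ is controlled near the boundary while the norm blows up; density and uniform boundedness then give the full weak convergence). Compactness of $A_\phi^\Theta$ then forces $\langle A_\phi^\Theta \widehat k_{\lambda_n}^\Theta, \widehat k_{\lambda_n}^\Theta\rangle\to 0$. On the other hand, this quantity equals $\|k_{\lambda_n}^\Theta\|^{-2}\int_\bbT \phi\,|k_{\lambda_n}^\Theta|^2\,dm$, and since $|k_{\lambda_n}^\Theta|^2/\|k_{\lambda_n}^\Theta\|^2\,dm$ is a probability measure on $\bbT$ that concentrates at $\zeta_0$ (it is essentially a Poisson-type kernel at $\lambda_n$, up to the bounded factor $|1-\overline{\Theta(\lambda_n)}\Theta|^2$), continuity of $\phi$ gives $\langle A_\phi^\Theta \widehat k_{\lambda_n}^\Theta, \widehat k_{\lambda_n}^\Theta\rangle\to \phi(\zeta_0)$. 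Hence $\phi(\zeta_0)=0$, and since $\rho(\Theta)$ is closed and $\phi$ continuous, $\phi|\rho(\Theta)=0$.

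For the converse, suppose $\phi$ is continuous on $\bbT$ with $\phi|\rho(\Theta)=0$; I want $A_\phi^\Theta$ compact. The natural approach is approximation: the continuous functions vanishing on $\rho(\Theta)$ are the uniform closure of those vanishing on a neighborhood of $\rho(\Theta)$, and $\phi\mapsto A_\phi^\Theta$ is norm-continuous (indeed $\|A_\phi^\Theta\|\le\|\phi\|_\infty$). So it suffices to treat $\phi$ continuous with $\phi\equiv 0$ on an open neighborhood $V$ of $\rho(\Theta)$. Writing $\psi=\bar\zeta\bar\phi$ or working directly, one wants to factor the operator through something manifestly compact. The cleanest route: $A_\phi^\Theta=P_\Theta M_\phi|_{K_\Theta}$, and $M_\phi$ splits as $M_\phi=P_+M_\phi+P_-M_\phi$, so $A_\phi^\Theta f = P_\Theta T_\phi f + P_\Theta(P_-(\phi f))$ for $f\in K_\Theta$; but $P_-(\phi f)=(H_\phi f)$, and one relates the second term to $P_\Theta\Theta\,\overline{(\cdot)}$-type expressions using $P_\Theta=P_+-\Theta P_+\bar\Theta$. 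The decisive structural fact is that when $\phi$ is supported away from $\rho(\Theta)$, multiplication by $\phi$ maps $K_\Theta$ into a region where $\Theta$ is "invertible up to compacts." Concretely, outside $\rho(\Theta)$ the function $\Theta$ extends analytically across $\bbT$, and $\operatorname{supp}\phi$ is a compact subset of $\bbT\setminus\rho(\Theta)$; one can then write $\phi=\Theta\Theta^{-1}\phi$ locally or, better, use that $T_\phi$ with $\operatorname{supp}\phi\cap\rho(\Theta)=\emptyset$ has the property that $A_\phi^\Theta=A_{\Theta\chi}^\Theta A_{\bar\Theta\phi/\chi}^\Theta+(\text{error})$ — this is getting complicated, so I would instead invoke the measure/Carleson-condition machinery: $A_\phi^\Theta=A_\mu^\Theta$ with $d\mu=\phi\,dm$, and $\mu$ restricted to a set at positive distance from $\rho(\Theta)$ is a measure on $\bbT$ whose relevant Carleson sets $S(I)$ meet $\Omega(\Theta,\epsilon)$ only for $|I|$ bounded away from $0$ — wait, that is not quite it either since $\Omega(\Theta,\epsilon)$ can approach all of $\bbT$.

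**The honest plan for the converse.** The argument I would actually carry out: localize $\phi$ so that $\operatorname{supp}\phi=:E$ is a compact arc-union with $E\cap\rho(\Theta)=\emptyset$. Then $\Theta$ is analytic in a neighborhood $U\supset E$ in $\bbC$ and $|\Theta|=1$ on $U\cap\bbT$, so $\Theta$ is bounded below on a slightly smaller neighborhood intersected with $\overline{\bbD}$; in particular $1/\Theta\in H^\infty(U\cap\bbD)$ there. Now factor $\phi=\Theta\cdot(\bar\Theta\phi)$ and note $\bar\Theta\phi$ can be smoothly extended so that $\bar\Theta\phi=\Theta^{-1}\phi$ agrees with a function $g$ continuous on $\bbT$ with $g|\rho(\Theta)=0$. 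Then for $f\in K_\Theta\cap H^\infty$, $A_\phi^\Theta f=P_\Theta(\Theta g f)$; since $\Theta g f$... and using $P_\Theta\Theta=0$ on $H^2$... this shows $A_\phi^\Theta f=P_\Theta(\Theta g f - \Theta P_+(gf))+\text{stuff}$, reducing the matter to $P_\Theta\Theta P_-(gf)=P_\Theta\Theta\overline{z}\,\overline{\overline{zgf}}$, i.e. to a Hankel-type operator $H_{\bar g}$ composed with bounded maps. Since $g$ is continuous, $H_{\bar g}$ is compact by Hartman's theorem (quoted in the excerpt), hence $A_\phi^\Theta$ is compact. I would then pass to the general $\phi$ vanishing on $\rho(\Theta)$ by the uniform approximation and norm-continuity of $\phi\mapsto A_\phi^\Theta$ noted above, taking a little care that the localization constants stay uniform.

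**Main obstacle.** The hard part is the converse, specifically making the "factor $\phi=\Theta g$ with $g$ continuous vanishing on $\rho(\Theta)$, then recognize the result as a compression of a compact Hankel operator" step rigorous: one must verify that $\bar\Theta\phi$, a priori only defined $m$-a.e.\ on $\bbT$, can be replaced by a genuinely continuous function agreeing with it on $\operatorname{supp}\phi$ and vanishing on $\rho(\Theta)$ (using analyticity and non-vanishing of $\Theta$ away from $\rho(\Theta)$), and that the bookkeeping with $P_\Theta=P_+-\Theta P_+\bar\Theta$ isolates exactly a Hankel operator with continuous symbol up to compact/bounded corrections. The easy direction is routine given the weak-null behavior of normalized reproducing kernels and the concentration of $|k_\lambda^\Theta|^2\,dm$.
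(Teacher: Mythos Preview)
Your proposal is essentially correct, but the route you take for the implication ``$\phi|\rho(\Theta)=0\Rightarrow A^\Theta_\phi$ compact'' differs from the paper's. The paper establishes this direction via the measure machinery (Proposition~\ref{pr:one direction for cont symbols}): one checks that $\mu=\phi\,dm$ satisfies the first $(\Theta,\epsilon)$-vanishing condition, invokes Theorem~\ref{th:cima-matheson} to get $\mu\in\VV_p(\Theta)$, and concludes by Theorem~\ref{th:compact corresponds to V(theta)}. Your approach instead factors through Hankel operators: once you observe that $\bar\Theta\phi$ agrees a.e.\ with a continuous function $g$ (since $\Theta$ is unimodular and continuous on $\bbT\setminus\rho(\Theta)$, while $\phi$ vanishes on $\rho(\Theta)$), the identity $A_\phi=(\Theta H_{\bar\Theta\phi}-H_\phi)|K_\Theta$ from~\eqref{eq:relation TTO Hankel} together with Hartman's theorem finishes the job. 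This is more elementary---it avoids the Carleson embedding theory entirely---and is really in the spirit of Proposition~\ref{pr:besonov simple}(i). Note also that your preliminary approximation by $\phi$ vanishing on a \emph{neighborhood} of $\rho(\Theta)$ is unnecessary: the argument above already gives $g\in\Cont$ directly for any continuous $\phi$ with $\phi|\rho(\Theta)=0$, since $|g(\zeta_n)|=|\phi(\zeta_n)|\to 0$ whenever $\zeta_n\to\zeta_0\in\rho(\Theta)$.

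For the other implication (``compact $\Rightarrow\phi|\rho(\Theta)=0$''), the paper gives no argument of its own, citing Ahern--Clark. Your reproducing-kernel approach is the standard one and is correct; just tighten the weak-null claim. The clean way to see $\widehat k^\Theta_{\lambda_n}\to 0$ weakly is to note that, since $\Theta(\lambda_n)\to 0$, one has $\|k^\Theta_{\lambda_n}\|^2\sim(1-|\lambda_n|^2)^{-1}$, so $\langle f,\widehat k^\Theta_{\lambda_n}\rangle\sim f(\lambda_n)(1-|\lambda_n|^2)^{1/2}=\langle f,\widehat k_{\lambda_n}\rangle_{H^2}$, and normalized Szeg\H{o} kernels tend to $0$ weakly in $H^2$. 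Your concentration claim for the probability measures $|k^\Theta_{\lambda_n}|^2\,dm/\|k^\Theta_{\lambda_n}\|^2$ is correct: the factor $|1-\overline{\Theta(\lambda_n)}\Theta|^2/(1-|\Theta(\lambda_n)|^2)$ tends to $1$ uniformly on $\bbT$, so the measures are asymptotically the Poisson kernels at~$\lambda_n$.

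The main weakness is presentational: your account of the converse meanders through two abandoned attempts before landing on the right idea. Rewrite it as a single clean paragraph using the factorization $\phi=\Theta g$ and the Hankel identity.
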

This result has been rediscovered more recently in~\cite{MR3203060}; see also ~\cite{GR}.

Thinking of Hartmann's theorem, it seems plausible to believe that continuous symbols play for compact TTOs the role played by bounded symbols for general TTOs. However,  as shown by Theorem~\ref{th:counterex BCFMT}, there exist inner functions $\Theta$ for which even rank-one operators might not have bounded symbols (not to speak about continuous). So we have to consider only certain classes of inner functions, suggested by the boundedness results in the previous section. In this sense one has the following result proved by Bessonov~\cite{Be}.

\begin{theorem}\label{co:besonov main-cor for TTO} 
	Suppose that $\CC_1(\Theta^2)=\CC_2(\Theta)$ and $A$ is a truncated truncated Toeplitz operator. Then the following are equivalent:
	\begin{enumerate}
		\item $A$ is compact.
		\item $A=A_{\phi\Theta}$ for some $\phi\in\Cont$.
	\end{enumerate}
	
\end{theorem}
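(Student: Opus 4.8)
The plan is to push the analogy between truncated Toeplitz and Hankel operators, the model for a continuous‑symbol criterion being Hartman's theorem. I would start from the factorization, valid for $\phi\in L^\infty$ and $f\in K_\Theta$,
\[
A^\Theta_{\Theta\phi}f=P_+(\Theta\phi f)-\Theta P_+(\phi f)=P_+\bigl(\Theta\,P_-(\phi f)\bigr)=L\,(H_\phi f),
\]
which follows from $P_\Theta=P_+-\Theta P_+\bar\Theta$ and $P_+\bigl(\Theta P_+(\phi f)\bigr)=\Theta P_+(\phi f)$; here $L:=P_+M_\Theta|_{H^2_-}$ is bounded and maps $H^2_-$ into $K_\Theta$, since $\langle P_+(\Theta h),\Theta g\rangle=\langle h,g\rangle=0$ for $h\in H^2_-$, $g\in H^2$. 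Thus $A^\Theta_{\Theta\phi}=L\circ(H_\phi|_{K_\Theta})$, and $(2)\Rightarrow(1)$ is then immediate: for $\phi\in\Cont$, $H_\phi$ is compact by Hartman's theorem, hence so is $A^\Theta_{\Theta\phi}$. (Equivalently, one can note that $A^\Theta_{\bar z^n\Theta}=L\,H_{\bar z^n}|_{K_\Theta}$ has finite rank, $A^\Theta_{z^n\Theta}=0$ for $n\ge0$, and $\phi\mapsto A^\Theta_{\Theta\phi}$ has norm at most $\|\phi\|_\infty$, so $A^\Theta_{\Theta\phi}$ is a norm limit of finite rank operators.)

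For $(1)\Rightarrow(2)$, I would first reduce to a bounded symbol: since $\CC_2(\Theta)=\CC_2(\Theta^2)$, the hypothesis reads $\CC_1(\Theta^2)=\CC_2(\Theta^2)$, so Theorem~\ref{th:main theorem BBK}(4) gives that every bounded, hence every compact, TTO on $K_\Theta$ has a bounded symbol; writing it in the form $\Theta g$ with $g\in L^\infty$, we have $A=A^\Theta_{\Theta g}$, and the goal becomes to find $\phi\in\Cont$ with $g-\phi\in H^2+\overline{\Theta^2 H^2}$ (equivalently $A^\Theta_{\Theta(g-\phi)}=0$). The heart of the argument is the identity that the set of compact TTOs on $K_\Theta$ equals $\mathcal{M}$, the closed linear span of $\{A^\Theta_{\bar z^n\Theta}:n\ge1\}$ in $\mathcal{K}(K_\Theta)$. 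One inclusion is contained in the first paragraph; for the other, by Hahn--Banach it is enough that every trace‑class operator $R$ on $K_\Theta$ with $\operatorname{tr}(A^\Theta_{\bar z^n\Theta}R)=0$ for all $n\ge1$ annihilates every compact TTO. Writing $R=\sum_k\langle\cdot,u_k\rangle v_k$ with $u_k,v_k\in K_\Theta$ and $\sum_k\|u_k\|_2\|v_k\|_2<\infty$, for $\psi\in L^\infty$ one computes
\[
\operatorname{tr}(A^\Theta_\psi R)=\sum_k\langle A^\Theta_\psi v_k,u_k\rangle=\int_\bbT\psi\,w\,dm,\qquad w:=\sum_k v_k\overline{u_k}\in L^1(\bbT),
\]
and, using the conjugation identity $\overline{u_k}=z\bar\Theta\,C_\Theta u_k$ on $\bbT$ together with $K_\Theta\cdot K_\Theta\subset K^1_{\Theta^2}$, one gets $w=z\bar\Theta\,W$ with $W:=\sum_k v_k\,C_\Theta u_k\in K^1_{\Theta^2}\subset H^1$. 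Hence $\operatorname{tr}(A^\Theta_{\bar z^n\Theta}R)=\int_\bbT\bar z^{n-1}W\,dm=\widehat W(n-1)$, so the hypothesis on $R$ forces $\widehat W(j)=0$ for all $j\ge0$; as $W\in H^1$, this gives $W=0$, hence $w=0$ and $\operatorname{tr}(A^\Theta_\psi R)=0$ for \emph{every} $\psi\in L^\infty$. Since every compact TTO has a bounded symbol, $R$ annihilates all of them, which proves the claimed identity.

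Because trigonometric polynomials are dense in $\Cont$ and $\phi\mapsto A^\Theta_{\Theta\phi}$ has norm at most $\|\phi\|_\infty$, $\mathcal{M}$ is the norm closure of $\{A^\Theta_{\Theta\phi}:\phi\in\Cont\}$, so the compact TTO $A$ lies in that closure. It remains to upgrade this to an exact representation, i.e.\ to show that $\{A^\Theta_{\Theta\phi}:\phi\in\Cont\}$ is itself norm‑closed. For this I would invoke the open mapping theorem: the bounded TTOs form a closed subspace of $\mathcal{L}(K_\Theta)$ which, by the hypothesis, is exactly the range of the bounded map $\psi\mapsto A^\Theta_\psi$ on $L^\infty$, so $\|A^\Theta_{\Theta\phi}\|$ is comparable to $\dist_{L^\infty}\bigl(\phi,\,L^\infty\cap(H^2+\overline{\Theta^2 H^2})\bigr)$; closedness of the image then reduces to comparing this quantity with $\dist_{\Cont}(\phi,N_0)$, where $N_0=\Cont\cap(H^2+\overline{\Theta^2 H^2})=\{\phi\in\Cont:A^\Theta_{\Theta\phi}=0\}$; in other words, it reduces to a Sarason‑type closedness statement in the spirit of the fact that $\Cont+H^\infty$ is closed in $L^\infty$. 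I expect this last point to be the main obstacle, whereas the duality computation above should go through without difficulty.
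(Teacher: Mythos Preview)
The paper does not give its own proof of this theorem; it attributes the result to Bessonov~\cite{Be} and only records related partial facts (Proposition~\ref{pr:besonov simple} for one implication, and Theorem~\ref{th:V_1 and continuous symbols} for a measure-theoretic variant). So there is no in-text proof to compare against, and I evaluate your argument on its own.

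Your implication $(2)\Rightarrow(1)$ is correct; the factorization $A^\Theta_{\Theta\phi}=L\,H_\phi|_{K_\Theta}$ is clean and gives the result immediately via Hartman. This is essentially the content of Proposition~\ref{pr:besonov simple}(i), which the paper proves by the closely related identity $A_\phi=(\Theta H_{\bar\Theta\phi}-H_\phi)|_{K_\Theta}$.

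For $(1)\Rightarrow(2)$, your duality computation is correct and elegant: the identification $w=z\bar\Theta W$ with $W\in K^1_{\Theta^2}$, together with $\widehat W(n-1)=\operatorname{tr}(A^\Theta_{\bar z^n\Theta}R)$, does show that any trace-class $R$ annihilating the finite-rank operators $A^\Theta_{\bar z^n\Theta}$ annihilates every $A^\Theta_\psi$ with $\psi\in L^\infty$. Combined with the hypothesis (so that every compact TTO has a bounded symbol), this yields
\[
\{\text{compact TTOs on }K_\Theta\}=\overline{\{A^\Theta_{\Theta\phi}:\phi\in\Cont\}}^{\,\|\cdot\|}.
\]

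The gap is exactly where you locate it: passing from the closure to an exact representation. Your proposed route---open mapping on $L^\infty\twoheadrightarrow\TT(\Theta)$ plus a ``Sarason-type'' closedness---amounts to showing that $\Theta\Cont+\bigl(L^\infty\cap(\Theta H^2+\overline{\Theta H^2})\bigr)$ is closed in $L^\infty$, or equivalently that $\dist_{L^\infty}(\phi,\,L^\infty\cap(H^2+\overline{\Theta^2 H^2}))$ controls $\dist_{\Cont}(\phi,N_0)$ for $\phi\in\Cont$. This does not follow from Sarason's theorem that $\Cont+H^\infty$ is closed; the second summand here involves $\overline{\Theta^2 H^2}$, and no off-the-shelf result gives closedness of such a sum. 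As stated, the proof is incomplete.

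A way to close the gap that stays close to your computation is to argue, as the paper does in the proof of Theorem~\ref{th:V_1 and continuous symbols}, via weak-$*$ continuity on $K^1_{z\Theta^2}\cap zH^1$ (which is weak-$*$ closed in $H^1$ by Bessonov's Lemma~3.1). Your map $R\mapsto zW$ lands in precisely this space, and compactness of $A$ is exactly the statement that $R\mapsto\operatorname{tr}(AR)$ is weak-$*$ continuous on $S_1=\mathcal K(K_\Theta)^*$; transferring this to weak-$*$ continuity of the induced functional on $K^1_{z\Theta^2}\cap zH^1$ then yields a representing $\phi\in\Cont$ directly, bypassing the closedness question. That transfer, however, requires knowing that $R\mapsto zW$ is (up to the annihilator of $\TT(\Theta)$) a weak-$*$ isomorphism onto $K^1_{z\Theta^2}\cap zH^1$---essentially a factorization statement for $K^1_{\Theta^2}$---and this is the non-trivial input your outline is missing.
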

 In particular, this is true if $\Theta$ is one component. 
 
One can see that instead of $\Cont$ the main role is played by $\Theta\Cont$. We give below some ideas about the connection between these two classes.

 \begin{theorem}\label{th:spectrum of measure zero}
 	Suppose $\CC_2(\Theta)=\CC_1(\Theta^2)$ and $m(\rho(\Theta))=0$. Then the following are equivalent for a truncated Toeplitz operator $A$.
 	\begin{itemize}
 		\item[(i)] $A$ is compact.
 		\item[(ii)] $A=A^\Theta_\phi$ for some $\phi\in\Cont$ with $\phi|\rho(\Theta)=0$.
 	\end{itemize}
 \end{theorem}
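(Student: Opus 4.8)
The plan is to deduce Theorem~\ref{th:spectrum of measure zero} from Theorem~\ref{co:besonov main-cor for TTO} by analysing the structure of the symbol $\phi\Theta$ appearing there, using the extra hypothesis $m(\rho(\Theta))=0$. The implication (ii)$\Rightarrow$(i) is the easy direction: if $\phi$ is continuous and vanishes on $\rho(\Theta)$, then by Theorem~\ref{th:tto with continuous symbols} the operator $A^\Theta_\phi$ is compact; note that here one does not even need the Carleson hypothesis. So the whole content is in (i)$\Rightarrow$(ii).

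For (i)$\Rightarrow$(i), start from Theorem~\ref{co:besonov main-cor for TTO}, which applies since $\CC_1(\Theta^2)=\CC_2(\Theta)$: if $A$ is compact, then $A=A^\Theta_{\psi\Theta}$ for some $\psi\in\Cont$. Now I would observe that $\overline{\Theta}$ has, at least on $\bbT$, modulus one almost everywhere; the idea is to write a putative new symbol as $\phi=\psi\Theta$ restricted suitably, or rather to correct it. Concretely, since $\psi\Theta\in L^\infty$ but is not necessarily continuous (because $\Theta$ itself need not be continuous on $\bbT$), the key point is that the \emph{discontinuities of $\Theta$ on $\bbT$ are confined to $\rho(\Theta)$}, which by hypothesis has Lebesgue measure zero. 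One must therefore massage $\psi\Theta$ into a genuinely continuous function that agrees with it as a symbol, i.e. differs from it by an element of $\Theta H^2+\overline{\Theta H^2}$ in the sense of~\eqref{eq:symbols zero}, and additionally vanishes on $\rho(\Theta)$. I would proceed as follows: first, on the (relatively open) set $\bbT\setminus\rho(\Theta)$ the function $\Theta$ is continuous, so $\psi\Theta$ is continuous there; the problem is purely about extending/modifying across the closed null set $\rho(\Theta)$. Multiply $\psi\Theta$ by a sequence of continuous cutoff functions $\chi_n$ that are $0$ on a shrinking neighbourhood of $\rho(\Theta)$ and $1$ outside a slightly larger neighbourhood. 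Then $\chi_n\psi\Theta\in\Cont$ automatically vanishes on $\rho(\Theta)$, and one needs to check that $A^\Theta_{\chi_n\psi\Theta}\to A^\Theta_{\psi\Theta}=A$ in operator norm, so that for $n$ large the compact $A$ is approximated by TTOs with the desired symbols; but since we want an exact equality $A=A^\Theta_\phi$, the cleaner route is to show directly that $A^\Theta_{(1-\chi_n)\psi\Theta}\to 0$, and that in fact $(1-\chi_n)\psi\Theta$ can be chosen so that the corresponding TTO is already exactly the relevant piece — this is where a little care with the algebra is needed.

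The hard part will be the last step: producing an \emph{exact} equality rather than merely a norm approximation. Here I expect to exploit the measure-theoretic description more heavily. Since $A=A^\Theta_\mu$ for $\mu\in\CC_2(\Theta)$ by Theorem~\ref{th:main theorem BBK}(2), and $A$ compact forces $\mu\in\VV_2(\Theta)$, one can hope to use Theorem~\ref{co:besonov main-cor for TTO} together with the observation that $\rho(\Theta)$ being null means the boundary part of $\mu$ (which for a Carleson measure on $\overline{\bbD}$ is absolutely continuous with bounded density) is supported, up to a null set, away from $\rho(\Theta)$ — or rather, that any mass $\mu$ could have near $\rho(\Theta)$ is controlled. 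An alternative and perhaps slicker argument: apply Theorem~\ref{co:besonov main-cor for TTO} to get $A=A^\Theta_{\psi\Theta}$, $\psi\in\Cont$; then set $\phi:=\psi\Theta\cdot\mathbf{1}_{\bbT\setminus\rho(\Theta)}$. Since $m(\rho(\Theta))=0$, we have $\phi=\psi\Theta$ as elements of $L^2$, hence trivially $A^\Theta_\phi=A^\Theta_{\psi\Theta}=A$; and $\phi$, defined as a genuine pointwise function, is continuous on $\bbT\setminus\rho(\Theta)$ and one assigns it value $0$ on $\rho(\Theta)$. The remaining issue is whether $\phi$ is continuous \emph{as a function on all of $\bbT$} — at a point $\zeta\in\rho(\Theta)$ this requires $\psi(w)\Theta(w)\to 0$ as $w\to\zeta$ along $\bbT$, which need not hold. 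So one genuinely does need the cutoff construction of the previous paragraph combined with the fact that changing $\psi\Theta$ on neighbourhoods of $\rho(\Theta)$ that shrink to a null set changes the operator by something that, by the compactness/vanishing Carleson input, can be absorbed — and the precise bookkeeping to turn "absorbed in the limit" into "equal" is the crux. I would isolate this as a lemma: if $\nu\in\VV_2(\Theta)$ and $m(\rho(\Theta))=0$, then $\nu$ is a norm-limit (in $\CC_2(\Theta)$) of measures supported away from $\rho(\Theta)$, with continuous bounded densities; granting that, Bessonov's theorem applied on compact subsets of $\bbT\setminus\rho(\Theta)$ finishes the proof.
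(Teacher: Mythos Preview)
Your treatment of (ii)$\Rightarrow$(i) is fine, and you correctly start (i)$\Rightarrow$(ii) by invoking Theorem~\ref{co:besonov main-cor for TTO} to write $A=A^\Theta_{\Theta\psi}$ with $\psi\in\Cont$. You also correctly identify the obstruction: $\Theta\psi$ fails to be continuous precisely at points of $\rho(\Theta)$, and one must modify the symbol without changing the operator. But from there your argument does not close. The cutoff functions $\chi_n$ give only $A^\Theta_{\chi_n\psi\Theta}\to A$ in norm, and you yourself note that there is no mechanism to upgrade this to an exact equality $A=A^\Theta_\phi$; the ``lemma'' you propose about $\VV_2(\Theta)$-measures being norm-limits of nice measures suffers from the same defect. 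Approximation in $\TT(\Theta)$ does not by itself produce a symbol of the limiting operator in a prescribed class.

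The missing idea is an \emph{interpolation} step, not an approximation step. Since $\rho(\Theta)\subset\bbT$ is closed and $m(\rho(\Theta))=0$, the Rudin--Carleson theorem yields a function $\psi_1$ in the disk algebra $\Cont\cap H^\infty$ with $\psi_1|\rho(\Theta)=\psi|\rho(\Theta)$. Now set $\phi:=\Theta(\psi-\psi_1)$. Because $\psi_1\in H^\infty$ we have $\Theta\psi_1\in\Theta H^2$, so $A^\Theta_{\Theta\psi_1}=0$ by~\eqref{eq:symbols zero}, and hence $A^\Theta_\phi=A^\Theta_{\Theta\psi}=A$ \emph{exactly}. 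On the other hand $\psi-\psi_1\in\Cont$ vanishes on $\rho(\Theta)$; since $\Theta$ is continuous on $\bbT\setminus\rho(\Theta)$ and $|\Theta|=1$ a.e., the product $\Theta(\psi-\psi_1)$ is continuous on $\bbT\setminus\rho(\Theta)$ and tends to $0$ at every point of $\rho(\Theta)$, so $\phi\in\Cont$ with $\phi|\rho(\Theta)=0$. This is the paper's argument: the point is that one can subtract something \emph{analytic} (hence invisible to the TTO) which exactly matches $\psi$ on $\rho(\Theta)$, rather than multiplying by a cutoff, which alters the operator.
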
 
 
 \begin{proof}
 	(ii)$\implies$(i) is proved in Theorem~\ref{th:tto with continuous symbols}.
 	
 	Suppose now (i) is true. By Theorem~\ref{co:besonov main-cor for TTO} $A=A_{\Theta\psi}$ for some $\psi\in\Cont$.
 	By the Rudin--Carleson interpolation theorem (see, for instance, \cite[Theorem II.12.6]{Ga}), there exists a function $\psi_1\in \Cont\cap H^\infty$ (that is, in the disk algebra) such that $\psi|\rho(\Theta)= \psi_1|\rho(\Theta)$. Then one checks easily that $\phi=\Theta(\psi-\psi_1)$ is continuous on $\bbT$, and $A_\phi=A_{\Theta\psi}$ (since $A_{\Theta\psi_1}=0$).	
 \end{proof} 
 In particular, Theorem~\ref{th:spectrum of measure zero} applies to the case $\Theta$ one-component, since for such functions we have $\CC_2(\Theta)=\CC_1(\Theta^2)$  and  
 $m(\rho(\Theta))=0$~\cite[Theorem 6.4]{A1}.
 
 We also have the following result which is contained in \cite[Proposition 2.1]{Be}; here is a simpler proof.
 
 \begin{proposition}\label{pr:besonov simple}
 	\begin{itemize}
 		\item[(i)]
 		If $\phi\in \Theta\Cont +\Theta H^\infty$, then $A_\phi$ is compact.
 		\item[(ii)]
 		If $\phi\in \Cont + H^\infty$, then the converse is also true.
 	\end{itemize}
 \end{proposition}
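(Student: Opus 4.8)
The plan is to deduce both implications from Hartman's theorem by means of a single factorization identity that makes the analogy between truncated Toeplitz operators and Hankel operators completely explicit. Concretely, I would first prove that for every $\phi\in L^\infty$ one has, as operators from $K_\Theta$ into $H^2_-$,
\begin{equation}\label{eq:cft-star}
	M_{\bar\Theta}A_\phi=\bigl(H_{\bar\Theta\phi}-M_{\bar\Theta}H_\phi\bigr)\big|_{K_\Theta}.
\end{equation}
Since $M_{\bar\Theta}$ is a unitary operator on $L^2$, \eqref{eq:cft-star} says in particular that $A_\phi$ is compact if and only if the operator on its right-hand side is compact.

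To establish~\eqref{eq:cft-star} I would combine three elementary observations. First, writing $H^2=K_\Theta\oplus\Theta H^2$ and using that $\bar\Theta K_\Theta\subseteq H^2_-$ while $\bar\Theta(\Theta H^2)\subseteq H^2$, one gets $H_{\bar\Theta}g=P_-(\bar\Theta g)=\bar\Theta P_\Theta g$ for every $g\in H^2$, that is, $H_{\bar\Theta}=M_{\bar\Theta}P_\Theta$ on $H^2$. Second, since $\bar\Theta H^2_-\subseteq H^2_-$, splitting $\phi f=T_\phi f+H_\phi f$ for $f\in H^2$ gives the Hankel product rule $H_{\bar\Theta\phi}=H_{\bar\Theta}T_\phi+M_{\bar\Theta}H_\phi$ on $H^2$. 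Third, for $f\in K_\Theta$ we have $P_\Theta T_\phi f=P_\Theta P_+(\phi f)=P_\Theta(\phi f)=A_\phi f$, so $P_\Theta T_\phi|_{K_\Theta}=A_\phi$. Restricting the second identity to $K_\Theta$ and inserting the other two gives $H_{\bar\Theta\phi}|_{K_\Theta}=M_{\bar\Theta}A_\phi+M_{\bar\Theta}H_\phi|_{K_\Theta}$, which is~\eqref{eq:cft-star}.

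Granting~\eqref{eq:cft-star}, part~(i) is immediate. If $\phi=\Theta\psi$ with $\psi\in\Cont+H^\infty$, then a.e.\ on $\bbT$ we have $\bar\Theta\phi=\psi$, so $H_{\bar\Theta\phi}=H_\psi$, which is compact by Hartman's theorem; moreover $H_\phi=H_{\Theta\psi}=H_\psi T_\Theta$ (as $\Theta$ is analytic), which is compact too, being the composition of a compact operator with a bounded one. Hence the right-hand side of~\eqref{eq:cft-star} is compact, and so is $A_\phi$. For part~(ii), suppose $\phi\in\Cont+H^\infty$ and $A_\phi$ is compact. Then $H_\phi$ is compact by Hartman's theorem, so~\eqref{eq:cft-star} shows $H_{\bar\Theta\phi}$ is compact on $K_\Theta$; it is also compact on $\Theta H^2$, since for $g\in H^2$ we have $H_{\bar\Theta\phi}(\Theta g)=P_-(\bar\Theta\phi\,\Theta g)=P_-(\phi g)=H_\phi g$. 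Because $H^2=K_\Theta\oplus\Theta H^2$, the operator $H_{\bar\Theta\phi}$ is compact on all of $H^2$, and since $\bar\Theta\phi\in L^\infty$ Hartman's theorem yields $\bar\Theta\phi\in\Cont+H^\infty$. Writing $\bar\Theta\phi=u+h$ with $u\in\Cont$ and $h\in H^\infty$, we conclude $\phi=\Theta u+\Theta h\in\Theta\Cont+\Theta H^\infty$.

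The step I expect to require the most care is the verification of~\eqref{eq:cft-star}: the inclusions $\bar\Theta K_\Theta\subseteq H^2_-$ and $\bar\Theta H^2_-\subseteq H^2_-$ (each reducing to the orthogonality of $\bar\Theta g$ to $H^2$ for the relevant $g$), the identity $H_{\bar\Theta}=M_{\bar\Theta}P_\Theta$, and keeping track of which spaces the various operators act between. Once~\eqref{eq:cft-star} is available, the two applications of Hartman's theorem, the elementary compactness of $H_\psi T_\Theta$ in~(i), and the orthogonal splitting $H^2=K_\Theta\oplus\Theta H^2$ in~(ii) complete the proof with no hypotheses beyond those stated.
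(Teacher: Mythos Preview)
Your proof is correct and follows essentially the same route as the paper: your identity~\eqref{eq:cft-star} is exactly the paper's relation $A_\phi=(\Theta H_{\bar\Theta\phi}-H_\phi)|K_\Theta$ after left-multiplying by the unitary $M_{\bar\Theta}$, and both parts then follow from Hartman's theorem in the same way. The only minor differences are that in~(i) you factor $H_\phi=H_\psi T_\Theta$ where the paper instead invokes that $\Cont+H^\infty$ is an algebra, and in~(ii) you spell out the passage from compactness on $K_\Theta$ to compactness on all of $H^2$ via the splitting $H^2=K_\Theta\oplus\Theta H^2$, a step the paper leaves implicit.
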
 
 
 \begin{proof}
 	First note that 
 	\begin{align}
 	A_\phi&=(\Theta H_{\bar\Theta \phi}-H_\phi)|K_\Theta. \label{eq:relation TTO Hankel}
 	\end{align}
 	By Hartmann's Theorem we know that a Hankel operator with bounded symbol is compact if and only if its symbol is in $\Cont + H^\infty$.  Since $\Cont + H^\infty$ is an algebra,  $\phi\in \Theta\Cont +\Theta H^\infty$, that is, $\bar\Theta  \phi\in \Cont + H^\infty$, implies  $ \phi\in \Cont + H^\infty$. Then applying~\eqref{eq:relation TTO Hankel} proves (i).
 	
 	On the other hand, if $\phi\in \Cont + H^\infty$,  again~\eqref{eq:relation TTO Hankel} proves (ii).
 \end{proof}
 
It is interesting to compare Theorem~\ref{co:besonov main-cor for TTO} to Proposition~\ref{pr:besonov simple}.  Suppose  that a TTO $A_\phi$ is compact. Proposition~\ref{pr:besonov simple} says that, if we know that $\phi\in \Cont + H^\infty$, then it has actually to be in $\Theta\Cont +\Theta H^\infty$. So there exists $\psi \in \Cont + H^\infty$ such that $\phi=\Theta \psi$. This is true with no special assumption on $\Theta$, but the symbol $\phi$ is assumed to be in a particular class.

On the other hand, suppose that $\Theta$ satisfies the assumption $\CC_2(\Theta)=\CC_1(\Theta^2)$, and again $A_\phi$ is compact. Without any a priori assumption on the symbol, applying Theorem~\ref{co:besonov main-cor for TTO} yields the existence of $\psi\in \Cont + H^\infty$  such that $A_\phi=A_{\Theta \psi}$. However, in this case we will not necessarily have $\phi=\Theta \psi$, but, according to~\eqref{eq:symbols zero}, $\phi-\Theta\psi\in \Theta H^2+\overline{\Theta H^2}$.

 	It would be interesting to give an example of a compact operator, with a symbol  $\psi\in\Theta \Cont+\Theta H^\infty$, that has no continuous symbol.

 Since $A_\phi$ is compact if and only if $A_\phi^*=A_{\bar{\phi}}$ is, any condition on the symbol produces another one by  conjugation. So one expects a definitive result to be invariant by conjugation. This is not the case, for instance, with Proposition~\ref{pr:besonov simple}: by conjugation we obtain that if $\phi\in \bar{\Theta}\Cont +\overline{\Theta H^\infty}$, then $A^\Theta_\phi$ is compact. Also, in Theorem~\ref{th:tto with continuous symbols} one could add a third equivalent condition, namely that $A=A_{\phi\overline{\Theta}}$ for some $\phi\in\Cont$. From this point of view, Theorem~\ref{th:spectrum of measure zero} is more satisfactory. Naturally, if $\Theta$ is one-component would actually be equivalent to $\CC_1(\Theta^2)=\CC_2(\Theta)$ (the open question stated above), then  Theorems~\ref{th:tto with continuous symbols} and~\ref{th:spectrum of measure zero} would turn out to be equivalent to a simple and symmetric statement for this class of functions.

\section{Compact TTOs and embedding measures}

In the present section we discuss some relations between compactness of TTOs and embedding measures. Let us first remember  that a (finite) complex measure on the unit circle can be decomposed by means of nonnegative finite measures, as stated more precisely in the following lemma \cite[chap. 6]{rudin}.    
\begin{lemma}\label{le:decomposition of measures}
	If $\mu$ is a complex measure, one can write $\mu=\mu_1-\mu_2+i \mu_3-i\mu_4$ with $0\le \mu_j\le |\mu|$ for $1\le j\le 4$.
\end{lemma}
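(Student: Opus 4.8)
The plan is to invoke the polar decomposition of a complex measure, which is the essential input. By the Radon--Nikodym theorem (see \cite[chap.~6]{rudin}), since $\mu\ll|\mu|$, there exists a $|\mu|$-measurable function $h$ with $|h|=1$ $|\mu|$-almost everywhere and $d\mu=h\,d|\mu|$. Everything else is bookkeeping with this single function.

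Next I would split $h$ into its real and imaginary parts and each of those into positive and negative parts: set $g_1=(\operatorname{Re}h)^+$, $g_2=(\operatorname{Re}h)^-$, $g_3=(\operatorname{Im}h)^+$, $g_4=(\operatorname{Im}h)^-$, so that $h=g_1-g_2+ig_3-ig_4$. Because $|h|=1$, each $g_j$ is a measurable function taking values in $[0,1]$. I then define $\mu_j$ by $\mu_j(A)=\int_A g_j\,d|\mu|$ for every Borel set $A$; each $\mu_j$ is a nonnegative finite measure. Integrating the identity $h=g_1-g_2+ig_3-ig_4$ against $|\mu|$ over an arbitrary $A$ gives $\mu(A)=\mu_1(A)-\mu_2(A)+i\mu_3(A)-i\mu_4(A)$, i.e. $\mu=\mu_1-\mu_2+i\mu_3-i\mu_4$.

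The only point that deserves care is the asserted bound $0\le\mu_j\le|\mu|$, read as an inequality between measures: the left inequality is clear since $g_j\ge0$, and the right inequality is the statement that $|\mu|-\mu_j$ is again a nonnegative measure, which follows from the pointwise bound $g_j\le1$ because $(|\mu|-\mu_j)(A)=\int_A(1-g_j)\,d|\mu|\ge0$ for all $A$. There is no genuine obstacle here; the lemma is a routine consequence of the Radon--Nikodym theorem, and the slightly delicate-looking domination by $|\mu|$ is exactly what the normalization $|h|=1$ buys us.
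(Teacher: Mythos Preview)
Your argument is correct: the polar decomposition $d\mu=h\,d|\mu|$ with $|h|=1$ followed by splitting $h$ into the positive and negative parts of its real and imaginary components is exactly the standard route, and the domination $0\le\mu_j\le|\mu|$ follows immediately from $0\le g_j\le 1$ as you say. The paper does not give its own proof of this lemma but simply cites \cite[chap.~6]{rudin}; your write-up is precisely the argument one extracts from the Radon--Nikodym/polar decomposition material there, so there is nothing to compare.
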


We will also use the following simple result.

\begin{lemma}\label{le:domination of measures}
	If $0\le \nu_1\le \nu_2$, then $\nu_2\in \CC^+_p(\Theta) $ implies $\nu_1\in \CC^+_p(\Theta) $, and $\nu_2\in \VV^+_p(\Theta) $ implies $\nu_1\in \VV^+_p(\Theta) $.
\end{lemma}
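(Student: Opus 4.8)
The plan is to reduce everything to the definition of the embedding classes via the sesquilinear forms, since the domination hypothesis $0\le\nu_1\le\nu_2$ translates directly into a pointwise inequality between the corresponding $L^p$-norms. First I would record the elementary fact that, because $\nu_1\le\nu_2$ as positive measures, one has $\int_\bbT |f|^p\,d\nu_1\le \int_\bbT|f|^p\,d\nu_2$ for every $f$; hence if $K_\Theta^p\hookrightarrow L^p(\nu_2)$ is bounded with norm $M$, then for all $f\in K_\Theta^p$,
\[
\|f\|_{L^p(\nu_1)}^p=\int_\bbT|f|^p\,d\nu_1\le\int_\bbT|f|^p\,d\nu_2=\|f\|_{L^p(\nu_2)}^p\le M^p\|f\|_{K_\Theta^p}^p,
\]
so $\nu_1\in\CC_p^+(\Theta)$ with $\|\iota_{\nu_1,p}\|\le\|\iota_{\nu_2,p}\|$. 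That disposes of the first implication.

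For the compactness statement the cleanest route is to factor the embedding $\iota_{\nu_1,p}$ through $\iota_{\nu_2,p}$. Write the Radon--Nikodym derivative $h=d\nu_1/d\nu_2$, which by hypothesis satisfies $0\le h\le 1$ $\nu_2$-a.e., and let $M_{h^{1/p}}:L^p(\nu_2)\to L^p(\nu_1)$ be multiplication by $h^{1/p}$; this is a contraction since $\int|h^{1/p}g|^p\,d\nu_2=\int|g|^p h\,d\nu_2=\int|g|^p\,d\nu_1$. Then on $K_\Theta^p$ we have the identity $\iota_{\nu_1,p}=M_{h^{1/p}}\circ\iota_{\nu_2,p}$, because for $f\in K_\Theta^p$ the function $h^{1/p}f$ has the same $L^p(\nu_1)$-class as $f$. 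If $\iota_{\nu_2,p}$ is compact, then the composition of a compact operator with the bounded operator $M_{h^{1/p}}$ is compact, so $\iota_{\nu_1,p}$ is compact, i.e.\ $\nu_1\in\VV_p^+(\Theta)$.

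I expect no serious obstacle here; the only point that needs a word of care is the measure-theoretic bookkeeping in the factorization — namely checking that $h$ is a genuine Radon--Nikodym derivative (which it is, since $\nu_1\le\nu_2$ forces $\nu_1\ll\nu_2$), that $0\le h\le 1$ $\nu_2$-a.e., and that elements of $K_\Theta^p$, being identified with boundary functions, are unambiguously defined $\nu_1$- and $\nu_2$-a.e.\ once they are defined $\nu_2$-a.e. An alternative that avoids Radon--Nikodym altogether is a direct sequential argument: given a bounded sequence $(f_n)$ in $K_\Theta^p$, extract via compactness of $\iota_{\nu_2,p}$ a subsequence with $f_{n_k}\to g$ in $L^p(\nu_2)$, and then observe $\|f_{n_k}-f_{n_l}\|_{L^p(\nu_1)}\le\|f_{n_k}-f_{n_l}\|_{L^p(\nu_2)}$ so that $(f_{n_k})$ is Cauchy, hence convergent, in $L^p(\nu_1)$; this shows $\iota_{\nu_1,p}$ maps bounded sets to relatively compact sets. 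Either version is short, and I would include whichever reads more smoothly in context.
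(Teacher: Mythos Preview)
Your overall strategy---factor $\iota_{\nu_1,p}$ through the compact embedding $\iota_{\nu_2,p}$ by a bounded map between the $L^p$ spaces---is exactly the paper's approach. The paper does it in one line: since $\nu_1\ll\nu_2$, the identity on functions gives a well-defined contraction $J:L^p(\nu_2)\to L^p(\nu_1)$ (because $\int|f|^p\,d\nu_1\le\int|f|^p\,d\nu_2$), and then $\iota_{\nu_1,p}=J\circ\iota_{\nu_2,p}$ handles both boundedness and compactness simultaneously.

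Your Radon--Nikodym version, however, is miswired. With $d\nu_1=h\,d\nu_2$ and $M_{h^{1/p}}:L^p(\nu_2)\to L^p(\nu_1)$ as you define it, the composition $M_{h^{1/p}}\circ\iota_{\nu_2,p}$ sends $f$ to $h^{1/p}f$ in $L^p(\nu_1)$, which is \emph{not} the same $L^p(\nu_1)$-class as $f$ (take $\nu_1=\tfrac12\nu_2$, so $h\equiv\tfrac12$). Your norm computation $\int|h^{1/p}g|^p\,d\nu_2=\int|g|^p\,d\nu_1$ is the norm of $h^{1/p}g$ in $L^p(\nu_2)$, not in $L^p(\nu_1)$, so it does not establish what you claim. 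The map you actually want is the plain identity $J$, with no multiplication; the Radon--Nikodym derivative is irrelevant here. Your alternative sequential argument is correct and is essentially a reformulation of the same factorization, so either drop the $h^{1/p}$ paragraph or replace it with the identity map $J$.
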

\begin{proof}
	If $0\le \nu_1\le \nu_2$, then we have a contractive embedding $J:L^p(\nu_2)\to L^p(\nu_1)$, and the lemma follows from the equality $\iota_{\nu_1,p}=J\iota_{\nu_2,p}$.
\end{proof}

The ultimate goal would be  to obtain for compact TTOs statements similar to those for boundedness appearing in Theorem~\ref{th:main theorem BBK}. But one can only obtain partial results: measures in $\VV_2(\Theta)$ produce compact TTOs, but the converse can be obtained only for positive operators.

\begin{theorem}\label{th:compact corresponds to V(theta)}
Suppose $A\in\TT(\Theta)$.
\begin{enumerate}
	\item
	If there exists $\mu\in \VV_2(\Theta)$ such that $A=A_\mu$, then $A$ is compact.
	\item
	If $A$ is compact and positive, then there exists $\mu\in \nu_2^+(\Theta)$ such that $A=A_\mu$.
\end{enumerate}
	
\end{theorem}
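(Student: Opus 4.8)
The plan is to reduce both statements to the factorization of $A_\mu$ through the embedding operator $\iota_\mu$, using Theorem~\ref{th:main theorem BBK}(1) as the one substantial external ingredient, needed only for part~(2).

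For (1), I would start from the polar decomposition $\mu=h\,|\mu|$ of the complex measure, with $|h|=1$ $|\mu|$-almost everywhere. Since $\mu\in\VV_2(\Theta)\subset\CC_2(\Theta)$, the embedding $\iota_\mu=\iota_{\mu,2}\colon K_\Theta\to L^2(|\mu|)$ is bounded, and for $f,g\in K_\Theta$,
\[
\<A_\mu f,g\>=\int f\bar g\,d\mu=\int h\,(\iota_\mu f)\,\overline{\iota_\mu g}\,d|\mu|=\<M_h\,\iota_\mu f,\,\iota_\mu g\>_{L^2(|\mu|)},
\]
so that $A_\mu=\iota_\mu^{*}\,M_h\,\iota_\mu$, where $M_h$ is a bounded (indeed unitary) operator on $L^2(|\mu|)$. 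Because $\mu\in\VV_2(\Theta)$ means precisely that $\iota_\mu$ is compact, $A_\mu$ is a product of a compact operator with bounded ones, hence compact. Note that positivity of $A$ plays no role here.

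For (2), since $A\ge 0$ is a bounded TTO, Theorem~\ref{th:main theorem BBK}(1) provides a positive measure $\mu\in\CC_2^{+}(\Theta)$ with $A=A_\mu$. Now $h\equiv 1$, so the computation above gives $A=\iota_\mu^{*}\iota_\mu$, hence
\[
\|\iota_\mu f\|_{L^2(\mu)}^{2}=\<Af,f\>=\|A^{1/2}f\|_{K_\Theta}^{2}\qquad(f\in K_\Theta).
\]
Since $A$ is compact and positive, $A^{1/2}$ is compact. To see that $\iota_\mu$ is then compact, I would take any bounded sequence $(f_n)$ in $K_\Theta$, pass to a weakly convergent subsequence $f_n\rightharpoonup f$, and use compactness of $A^{1/2}$ to get $\|A^{1/2}(f_n-f)\|_{K_\Theta}\to 0$; the displayed identity then yields $\|\iota_\mu f_n-\iota_\mu f\|_{L^2(\mu)}\to 0$, so $\iota_\mu$ is compact, i.e.\ $\mu\in\VV_2^{+}(\Theta)$.

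The only genuinely nontrivial step is the appeal to Theorem~\ref{th:main theorem BBK}(1); everything else is the factorization identity plus a routine weak-convergence argument. The reason the converse in (2) is confined to positive operators is exactly that for a general complex representing measure the middle factor $M_h$ need not be positive, so $A=\iota_\mu^{*}M_h\iota_\mu$ does not let one recover $\|\iota_\mu f\|$ from $A$ alone; compactness of $A$ therefore does not obviously transfer to $\iota_\mu$, which is what blocks a full analogue of Theorem~\ref{th:main theorem BBK} for compact TTOs.
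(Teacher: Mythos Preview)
Your proof is correct. For part~(2) you follow essentially the same route as the paper: invoke Theorem~\ref{th:main theorem BBK}(1) to obtain $\mu\in\CC_2^+(\Theta)$, then use the identity $\|\iota_\mu f\|^2=\langle Af,f\rangle$ to transfer compactness from $A$ to $\iota_\mu$. The only cosmetic difference is that you phrase this via $A^{1/2}$, while the paper bounds $\|\iota_\mu f_n\|^2\le\|Af_n\|\,\|f_n\|$ directly; both amount to the same thing.

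For part~(1) your argument is genuinely more streamlined than the paper's. The paper first decomposes $\mu=\mu_1-\mu_2+i\mu_3-i\mu_4$ into four nonnegative pieces dominated by $|\mu|$ (Lemma~\ref{le:decomposition of measures}), uses Lemma~\ref{le:domination of measures} to conclude each $\mu_j\in\VV_2^+(\Theta)$, and only then runs a weak-sequence estimate for positive measures. You bypass this reduction entirely by writing the polar decomposition $\mu=h\,|\mu|$ and observing the clean factorization $A_\mu=\iota_\mu^*M_h\iota_\mu$, from which compactness is immediate since $\iota_\mu$ is compact and $M_h$ is unitary. This avoids both auxiliary lemmas and the explicit sequential argument; the paper's decomposition buys nothing extra here, so your route is the more economical one.
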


\begin{proof}
	(1)
	Take $A_\mu$ with $\mu\in \VV_2(\Theta)$. Writing $\mu=\mu_1-\mu_2+i \mu_3-i\mu_4$ as in Lemma~\ref{le:decomposition of measures}, one has  $A_\mu=A_{\mu_1}- A_{\mu_2}+iA_{\mu_3}-iA_{\mu_4}$. Since $0\leq \mu_j\leq |\mu|$, it follows that $\mu_j\in \VV_2^+(\Theta)$ by Lemma~\ref{le:domination of measures}. So we may suppose from the beginning that $\mu \in\VV_2^+(\Theta) $.
	
	To show that $A_\mu$ is compact, take a sequence $(f_n)$ tending weakly to 0 in $K_\Theta$, and $g\in K_\Theta$ with $\|g\|_2=1$. Formula~\eqref{eq:tto by measure} can be written
	\[
	\<A_\mu f_n, g\>_2=\int \iota_\mu (f_n) \overline{\iota_\mu ( g)} d\mu,
	\]
	and thus
	\[
	|\<A_\mu f_n, g\>|\le \|\iota_\mu (f_n)\|_{L^2(\mu)} \|\iota_\mu (g)\| _{L^2(\mu)} \le \|\iota_\mu (f_n)\|_{L^2(\mu)} \|\iota_\mu\|.
	\]
	Taking the supremum with respect to $g$, we obtain
	\[
	\|A_\mu f_n\|_2\le \|\iota_\mu (f_n)\|_{L^2(\mu)}  \|\iota_\mu\|.
	\]
	But $f_n\to 0$ weakly and $\iota_\mu$ compact imply that $\|\iota_\mu (f_n)\|_{L^2(\mu)} \to 0$. So $\|A_\mu f_n\|\to 0$ and therefore $A_\mu$ is compact.
	
\smallskip (2)
	If $A\geq 0$, by Theorem~\ref{th:main theorem BBK}, there exists $\mu\in \CC_2^+(\Theta)$ such that $A=A_\mu$. We must then show that $\mu\in \VV_2^+(\Theta)$; that is,  $\iota_\mu$ is compact.
	
	Take then a sequence $f_n$ tending weakly to 0 in $K_\Theta$; in particular, $(f_n)$ is bounded, so we may assume $\|f_n\|\le M$ for all~$n$. Applying again formula~\eqref{eq:tto by measure}, we have
	\[
	\<A_\mu f_n, f_n\>=\int \iota_\mu (f_n) \overline{\iota_\mu (f_n)} d\mu = \|\iota_\mu (f_n)\|_{L^2(\mu)}^2.
	\]
	Therefore
	\[
	\|\iota_\mu (f_n)\|_{L^2(\mu)}^2\le \|A_\mu f_n\| \|f_n\|\le M\|A_\mu f_n\|. 
	\]
	Since $A_\mu$ is compact, $ \|A_\mu f_n\|\to 0$. The same is then true about $\|\iota_\mu (f_n)\|_{L^2(\mu)}^2$; thus $\iota_\mu$ is compact, that is, $\mu\in \VV_2^+(\Theta)$.
	\end{proof}	
%
%
%

This approach leads to an alternate proof of Theorem~\ref{th:tto with continuous symbols}.

\begin{proposition}\label{pr:one direction for cont symbols}
	If $\phi\in  \Cont$ and $\phi|\rho(\Theta)=0$, then the measure $\mu=\phi dm$ is in $\VV_p(\Theta)$ for every $1\leq p<\infty$. In particular, $A_\phi$ is compact.
\end{proposition}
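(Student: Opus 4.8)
The plan is to exhibit $\mu=\phi\,dm$ as an element of $\VV_2(\Theta)$ and then invoke Theorem~\ref{th:compact corresponds to V(theta)}(1); the stronger assertion that $\mu$ lies in every $\VV_p(\Theta)$, $1\le p<\infty$, will be extracted from the Cima--Matheson criterion, Theorem~\ref{th:cima-matheson}. First I would reduce to a positive measure: by the definition of $\VV_p(\Theta)$, the claim $\mu=\phi\,dm\in\VV_p(\Theta)$ is equivalent to $|\mu|=|\phi|\,dm\in\VV^+_p(\Theta)$, and $|\phi|$ is again continuous, nonnegative, and vanishes on $\rho(\Theta)$. Since $|\phi|\in L^\infty$, the measure $|\mu|$ has bounded density with respect to $m$, hence is a Carleson measure and in particular a $\Theta$-Carleson measure; the same holds for all of its truncations $|\mu|_\delta=\1_{H_\delta}\,|\mu|$, with $H_\delta$ as in~\eqref{eq:definition of H_delta}.

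Next I would verify that $|\mu|$ satisfies the first $(\Theta,\epsilon)$-vanishing condition. Fix $\eta>0$; since $\rho(\Theta)$ is compact, $\phi$ is continuous and $\phi|\rho(\Theta)=0$, there is $\delta_0>0$ such that $|\phi(\zeta)|<\eta$ whenever $\dist(\zeta,\rho(\Theta))<\delta_0$, i.e. on $H_{\delta_0}\cap\bbT$. Hence $|\mu|_\delta\le\eta\,m$ for all $\delta\le\delta_0$, so the $(\Theta,\epsilon)$-Carleson constant of $|\mu|_\delta$ is at most $\eta$ times the (finite) $(\Theta,\epsilon)$-Carleson constant of $m$. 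As $\eta>0$ was arbitrary, these constants tend to $0$ as $\delta\to 0$, which is precisely the first $(\Theta,\epsilon)$-vanishing condition, and Theorem~\ref{th:cima-matheson} then yields $|\mu|\in\VV^+_p(\Theta)$ for every $1\le p<\infty$, that is, $\mu\in\VV_p(\Theta)$.

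Finally, for the last assertion I would take $p=2$: then $\mu\in\VV_2(\Theta)\subset\CC_2(\Theta)$, and since $\phi\in L^\infty$ a direct comparison with~\eqref{eq:tto by measure} gives $\langle A^\Theta_\phi f,g\rangle=\langle \phi f,g\rangle=\int f\bar g\,\phi\,dm=\int f\bar g\,d\mu=\langle A^\Theta_\mu f,g\rangle$ for $f,g\in K_\Theta\cap H^\infty$, so $A_\phi=A^\Theta_\mu$; Theorem~\ref{th:compact corresponds to V(theta)}(1) then gives compactness of $A_\phi$. I do not expect a genuine obstacle here; the two points needing care are keeping the estimate at the level of the \emph{first} vanishing condition (rather than merely the second), so that the conclusion covers $p=1$ along with $1<p<\infty$, and carrying out the reduction from the complex measure $\phi\,dm$ to the positive measure $|\phi|\,dm$ at the outset, which is immediate from the definitions.
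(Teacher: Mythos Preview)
Your proposal is correct and follows essentially the same route as the paper: both verify the first $(\Theta,\epsilon)$-vanishing condition for $|\mu|=|\phi|\,dm$ by using uniform continuity of $\phi$ and its vanishing on $\rho(\Theta)$ to bound $|\mu|_\delta$ uniformly, then invoke Theorem~\ref{th:cima-matheson} for membership in $\VV_p^+(\Theta)$ and Theorem~\ref{th:compact corresponds to V(theta)}(1) for compactness of $A_\phi$. The only cosmetic difference is that the paper estimates $|\mu|_\delta(S(I))\le\epsilon|I|$ directly, whereas you phrase it as $|\mu|_\delta\le\eta\,m$ and compare Carleson constants; both yield the same conclusion.
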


\begin{proof}
	Since $\phi\in L^\infty$, the measure $|\mu|$ is an obvious $\Theta$-Carleson measure. Now fix $\epsilon>0$. Since $\phi$ is uniformly continous on $\bbT$, there exists $\eta>0$ such that, if $\zeta\in\bbT$, $\dist(\zeta, \rho(\Theta))<\eta$, then $|\phi(\zeta)|<\epsilon$. In other words, if $\zeta\in H_\eta$, then $|\phi(\zeta)|<\epsilon$ (where $H_\eta$ is defined by~\eqref{eq:definition of H_delta}). 
	
	 Let $\delta<\eta$ and $I$ be any arc of $\bbT$. Then we have
	\[
	\begin{split}
		|\mu|_\delta(T(I))&=|\mu|(T(I)\cap H_\delta)\\
		&=\sup
		\left\{\sum_{i\geq 1}|\mu(E_i)|: \bigcup_{i\geq 1} E_i=T(I)\cap H_\delta, \ E_i\cap E_j=\emptyset\text{ for }i\not=j\right\}
	\end{split}
	\]
	Since $E_i\subset H_\delta\subset H_\eta$, note that
	$$
	\begin{aligned}
	|\mu(E_i)|&=\left|\int_{E_i}\phi\,dm\right|\\
	&\leq \int_{E_i}|\phi|\,dm\leq \epsilon \,m(E_i).
	\end{aligned}
	$$ 
	Hence
	$$
	|\mu|_\delta(T(I))\leq \epsilon |I|,
	$$
	which shows that 
	the $\Theta$-Carleson constant of $|\mu|_\delta$ is smaller than $\epsilon$. We conclude the proof applying Theorem~\ref{th:cima-matheson} and Theorem~\ref{th:compact corresponds to V(theta)}.
\end{proof}

The next theorem is a partial analogue of Theorem~\ref{th:main theorem BBK} (3).

\begin{theorem}\label{th:V_1 and continuous symbols}
	Suppose $\mu\in\VV_1(\Theta^2)$. Then $A_\mu=A_{\Theta\phi}$ for some $\phi\in\Cont$.
\end{theorem}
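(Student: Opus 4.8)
The plan is to combine the analogue of Theorem~\ref{th:main theorem BBK}(3) for boundedness with the compactness-to-$\VV$ translation recorded in Theorem~\ref{th:compact corresponds to V(theta)}. Suppose $\mu\in\VV_1(\Theta^2)\subset\CC_1(\Theta^2)$. By Theorem~\ref{th:main theorem BBK}(3), applied to the TTO $A_\mu\in\TT(\Theta)$ (which we must first check coincides with $A^\Theta_\mu$ for the measure $\mu$ viewed as an element of $\CC_2(\Theta)$ via the inclusion $\CC_1(\Theta^2)\subset\CC_2(\Theta)$), we get that $A_\mu$ admits a bounded symbol, i.e.\ $A_\mu=A_{\Theta\psi}$ for some $\psi\in H^\infty$; more precisely, the statement of Theorem~\ref{th:main theorem BBK}(3) says that having a bounded symbol is \emph{equivalent} to being of the form $A^\Theta_\nu$ for some $\nu\in\CC_1(\Theta^2)$, and tracing through its proof one sees the bounded symbol is of the shape $\Theta\psi$. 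So the issue is to upgrade ``bounded $\psi$'' to ``continuous $\psi$'', using the extra hypothesis $\mu\in\VV_1(\Theta^2)$ rather than just $\mu\in\CC_1(\Theta^2)$.

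The key step is therefore a \emph{vanishing} refinement of the boundedness argument: one wants that a measure $\mu\in\VV_1(\Theta^2)$ produces, via the identification of TTOs with Hankel-type operators, a Hankel operator $\Gamma[\mu]$ (or a piece of it after multiplying by $\bar\Theta$) that is \emph{compact}, and then invoke Hartman's Theorem to replace its bounded symbol by a continuous one. Concretely, I would use the matrix/operator correspondence from Section~3: for $f,g\in K_\Theta$ bounded, $fg\in K^1_{\Theta^2}$, so $\langle A^\Theta_\mu f,g\rangle=\int f\bar g\,d\mu$ only sees $\mu$ through its action on $K^1_{\Theta^2}$, and compactness of the embedding $\iota_{\mu,1}:K^1_{\Theta^2}\to L^1(|\mu|)$ forces the relevant operator to be compact. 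The passage $\mu\in\VV_1(\Theta^2)$ $\Rightarrow$ ``$A_\mu$ compact'' itself follows from (the $p=1$ analogue of) Theorem~\ref{th:compact corresponds to V(theta)}(1) together with $\VV_1(\Theta^2)\subset\VV_2(\Theta)$, which was noted in Subsection~\ref{subsection-carl-model-space}. Then Bessonov's Theorem~\ref{co:besonov main-cor for TTO} does \emph{not} directly apply since we don't know $\CC_1(\Theta^2)=\CC_2(\Theta)$ in general; so instead I would argue at the level of the symbol: the bounded symbol $\Theta\psi$ corresponds, via $A_\phi=(\Theta H_{\bar\Theta\phi}-H_\phi)|K_\Theta$ (formula~\eqref{eq:relation TTO Hankel}), to the Hankel operator $H_\psi$ being compact, whence by Hartman's Theorem $H_\psi=H_{\psi_1}$ for some $\psi_1\in\Cont$; adjusting by an $H^\infty$ correction and by $\Theta H^\infty$, one obtains $A_\mu=A_{\Theta\phi}$ with $\phi=\psi_1\in\Cont$.

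The main obstacle I anticipate is the first link in the chain: showing rigorously that $\mu\in\VV_1(\Theta^2)$ makes the \emph{specific} Hankel operator appearing in the symbol decomposition compact, and not merely the TTO $A_\mu$ compact. Compactness of $A_\mu$ on $K_\Theta$ is weaker than compactness of $H_\psi$ on all of $H^2$ (there are many symbols, and the map symbol $\mapsto$ TTO kills a large subspace), so one genuinely needs the measure-theoretic input $\mu\in\VV_1(\Theta^2)$—i.e.\ compactness of the \emph{$K^1_{\Theta^2}$-embedding}—to control the full Hankel operator via the Powers--Widom type identification $\Gamma[\mu]=\Gamma_\psi$ and the fact (stated in Section~3) that a compact $\Gamma[\mu]$ equals $\Gamma[\nu]$ for a vanishing Carleson $\nu$ on $\overline{\bbD}$. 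Once compactness of the relevant Hankel operator is in hand, Hartman's Theorem and the routine bookkeeping with $\Theta H^\infty$ and formula~\eqref{eq:symbols zero} finish the argument; but pinning down exactly which Hankel operator and why $\VV_1(\Theta^2)$ suffices is where the real work lies.
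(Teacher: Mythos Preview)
Your proposal correctly identifies its own weak point, and that weak point is fatal for the route you sketch. You want to pass from $\mu\in\VV_1(\Theta^2)$ to compactness of a Hankel operator $H_\psi$ acting on all of $H^2$, so that Hartman's theorem applies. But the only relation you have between $\mu$ and $\psi$ is $A^\Theta_\mu=A^\Theta_{\Theta\psi}$, which is a statement about the bilinear form on $K_\Theta\times K_\Theta$, i.e.\ about the action of $\mu$ on functions in $\bar\Theta K^1_{\Theta^2}$. The hypothesis $\mu\in\VV_1(\Theta^2)$ is likewise information only about $K^1_{\Theta^2}$, not about all of $H^1$. Neither the Powers--Widom identification nor formula~\eqref{eq:relation TTO Hankel} lets you control $H_\psi$ on the part of $H^2$ orthogonal to $K_\Theta$; the map $\psi\mapsto A^\Theta_{\Theta\psi}$ annihilates all of $H^\infty$, so there is simply not enough information to force $H_\psi$ compact. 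The compactness you can actually extract ($A_\mu$ compact on $K_\Theta$) is, as you say, strictly weaker.

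The paper avoids this detour entirely and argues by duality, staying inside the model-space world. One uses that $K^1_{z\Theta^2}\cap zH^1$ is w*-closed in $H^1$ (a lemma of Bessonov), and defines on it the functional $\ell(f)=\int\bar\Theta f\,d\mu$. Compactness of the embedding $\iota_{\mu,1}:K^1_{\Theta^2}\to L^1(|\mu|)$ is used \emph{directly} to show that $\ell$ is w*-continuous: if $f_n\to0$ weak-*, the sequence is bounded, so $\iota_{\mu,1}f_n\to0$ in $L^1(|\mu|)$, hence $\ell(f_n)\to0$. A w*-continuous functional on a w*-closed subspace is represented by an element of the predual, giving $\phi\in\Cont$ with $\int\bar\Theta f\,d\mu=\int\phi f\,dm$ for all such $f$; testing on $f=\Theta g\bar h$ with $g,h\in K_\Theta$ yields $A_\mu=A_{\Theta\phi}$. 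This is essentially a localized Hartman argument carried out on the correct subspace, rather than an attempt to lift to a global Hankel operator.
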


\begin{proof} By~\cite[Lemma 3.1]{Be} we know that $K^1_{z\Theta^2}\cap zH^1$ is w*-closed when we consider it embedded in $H^1=\Cont/H^1_0$.
	We define on $K^1_{z\Theta^2}\cap zH^1$ the linear functional $\ell$ by
	\[
	\ell(f)=\int \bar{\Theta} f\, d\mu.
	\]
	It is clear that $\ell$ is continuous, but we assert that it is also w*-continuous. Indeed, the w* topology is metrizable (since $\Cont$ is separable), and therefore we can check w*-continuity on sequences. 
	
	If $f_n\to0$ w*, then, in particular, the sequence $(f_n)$ is bounded. Then, since $\iota_\mu$ is compact, the sequence $(\iota_\mu (f_n))$ is compact in $L^1(\mu)$, and a standard argument says that, in fact, $\iota_\mu f_n\to 0$ in  $L^1(\mu)$. Then 
	\[
	\ell(f_n)=\int \bar{\Theta}(\iota_\mu f_n)\, d\mu\to 0.
	\]
	
	It follows that there exists $\phi\in\Cont$, such that 
	\[
	\ell(f)=\int \bar{\Theta} f\, d\mu=\int \phi f\, dm
	\]
	for every $f\in K^1_{z\Theta^2}\cap zH^1$, or, equivalently,
	\[
	\int  f\, d\mu=\int\Theta \phi f\, dm
	\]
	for every $f\in\bar{\Theta}(K^1_{z\Theta^2}\cap zH^1)$. If $g,h\in K^2_\Theta$, then $g\bar h\in \bar{\Theta}(K^1_{z\Theta^2}\cap zH^1)$ , so
	\[
	\< A_\mu g, h\>= \int g\bar h\, d\mu=
	\int \Theta\phi g\bar h\, dm =\<A_{\Theta\phi}f, g\>,
	\]
	which proves the theorem.
\end{proof}

In particular, it follows from Proposition~\ref{pr:besonov simple} that  if $\mu\in\VV_1(\Theta^2)$ then $A^\Theta_\mu$ is compact.

\section{TTOs in other ideals}\label{se:TTOs in ideals}

The problem of deciding when certain TTOs are in Schatten--von Neumann classes $S_p$ has no clear solution yet, even in the usually simple case of the Hilbert--Schmidt ideal. In~\cite{LR} one gives criteria for particular cases; to convey their flavour, below is an example (Theorem~3 of \cite{LR}). Remember that $\Theta$ is called an interpolating Blaschke product if its zeros $(z_i)$  form an interpolation sequence, or, equivalently, if they satisfy the Carleson condition
\[
\inf_{i\in\bbN} \prod_{j\not=i}\left| \frac{z_i-z_j}{1-\bar z_i z_j}\right|>0.
\]

\begin{theorem}\label{th:lopatto-rochberg l_p}
	Suppose $\Theta$ is an interpolating Blaschke product and $\phi$ is an analytic function. Then:
	\begin{enumerate}
		\item $A_\phi$ is compact if and only if $\phi(z_i)\to 0$.
		\item For $1\le p<\infty$, $A_\phi\in S_p$ if and only if $(\phi(z_i))\in \ell^p$.
	\end{enumerate}  
\end{theorem}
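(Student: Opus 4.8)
\medskip\noindent\textbf{Proof idea.}
The plan is to diagonalize $A_\phi$ by means of reproducing kernels. Since $(z_i)_{i\in\bbN}$ is an interpolating sequence, its points are distinct and are the (automatically simple) zeros of $\Theta$, and the normalized reproducing kernels $e_i:=k^\Theta_{z_i}/\|k^\Theta_{z_i}\|$ form a Riesz basis of $K_\Theta$ --- this Shapiro--Shields description of interpolating sequences is the one place where the interpolation hypothesis is used. Because $\Theta(z_i)=0$ we have $k^\Theta_{z_i}=k_{z_i}$, the Szeg\H{o} kernel, which is bounded and so lies in $K_\Theta\cap H^\infty$, the domain of the truncated Toeplitz operators. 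I would first assume $\phi\in H^\infty$ (so $A_\phi$ is bounded) and show that each $e_i$ is an eigenvector of $A_\phi^*=A_{\bar\phi}$: for $f\in K_\Theta\cap H^\infty$ one has $\phi f\in H^2$, and
\[
\langle A_{\bar\phi}k^\Theta_{z_i},f\rangle=\langle \bar\phi\,k^\Theta_{z_i},f\rangle=\langle k^\Theta_{z_i},\phi f\rangle=\langle k_{z_i},\phi f\rangle=\overline{(\phi f)(z_i)}=\overline{\phi(z_i)}\,\overline{f(z_i)}=\overline{\phi(z_i)}\,\langle k^\Theta_{z_i},f\rangle,
\]
the first equality using $k^\Theta_{z_i}\in K_\Theta$ and the rest using, in turn, $k^\Theta_{z_i}=k_{z_i}$ and the reproducing properties of $k_{z_i}$ in $H^2$ and of $k^\Theta_{z_i}$ in $K_\Theta$. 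By density of $K_\Theta\cap H^\infty$ this yields $A_{\bar\phi}k^\Theta_{z_i}=\overline{\phi(z_i)}\,k^\Theta_{z_i}$, i.e. $A_\phi^*e_i=\overline{\phi(z_i)}\,e_i$ for every $i$.

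Next I would transfer the problem to $\ell^2$. Let $V\colon\ell^2\to K_\Theta$ be the bounded invertible operator sending the $i$-th standard basis vector to $e_i$. The eigenrelation says $V^{-1}A_\phi^*V$ equals the diagonal operator $\mathrm{diag}(\overline{\phi(z_i)})$ on $\ell^2$ (both are bounded and agree on the dense span of the $e_i$), so, taking adjoints, $A_\phi=(V^*)^{-1}DV^*$ with $D:=\mathrm{diag}(\phi(z_i))$. As $V^*$ and $(V^*)^{-1}$ are bounded and the Schatten classes are two-sided ideals with $\|XTY\|_{S_p}\le\|X\|\,\|T\|_{S_p}\,\|Y\|$, the operators $A_\phi$ and $D$ are simultaneously compact and simultaneously in $S_p$; and a diagonal operator on an orthonormal basis is compact iff its diagonal tends to $0$, and lies in $S_p$ iff its diagonal lies in $\ell^p$. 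This proves both assertions. To cover the bare hypothesis that $\phi$ is analytic, one adds the standard complement that, for $\Theta$ interpolating and $\phi$ analytic, $A_\phi$ is bounded exactly when $(\phi(z_i))_i\in\ell^\infty$: in the forward (\emph{only if}) implications one may then replace $\phi$ by a bounded analytic symbol of $A_\phi$ (such a symbol exists, as recalled in Section~\ref{se:TTO}), while in the backward implications the hypothesis $(\phi(z_i))_i\in\ell^p\subset\ell^\infty$ already forces $(V^*)^{-1}DV^*$ to be bounded and to coincide with $A_\phi$ on $K_\Theta\cap H^\infty$. One also notes, using that two analytic symbols of a TTO differ by an element of $\Theta H^2$, that $(\phi(z_i))_i$ depends only on $A_\phi$.

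The only genuinely nontrivial ingredient is the Riesz basis property of $\{e_i\}$, which I would quote from the theory of interpolating sequences rather than reprove; granting it, everything else is a short manipulation with reproducing kernels, one adjoint, and the ideal property of $S_p$. The point that takes the most care is expository rather than mathematical: reconciling the statement's bare hypothesis that \emph{$\phi$ is analytic} with the requirement that $A_\phi$ be a genuine bounded operator, which the $\ell^\infty$ remark above handles.
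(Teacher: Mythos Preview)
Your argument is correct, and it is the natural one: the Shapiro--Shields Riesz basis of normalized kernels diagonalizes $A_\phi^*$ with diagonal $(\overline{\phi(z_i)})$, and similarity via the Riesz basis isomorphism transfers compactness and Schatten membership to the diagonal operator. Your handling of the eigenvector computation, of the passage from $A_\phi^*$ to $A_\phi$, and of the reduction from a merely analytic symbol to a bounded analytic symbol (using Sarason's commutant lifting result recalled in Section~\ref{se:TTO}, together with the observation that two analytic symbols of the same TTO differ by an element of $\Theta H^2$ and hence agree on the zeros) are all fine.

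There is nothing to compare against in the present paper: Theorem~\ref{th:lopatto-rochberg l_p} is quoted without proof from~\cite{LR} (it is stated there as Theorem~3). The argument you give is essentially the one in~\cite{LR}, which likewise rests on the Riesz basis property of the kernels at an interpolating sequence; so your proposal both supplies a proof absent from this survey and aligns with the original source.
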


More satisfactory results are obtained in~\cite{LR} in the case of Hilbert--Schmidt operators, but even in this case an explicit equivalent condition on the symbol is hard to formulate. Let us start by assuming that $\Theta(0)=0$ (see the discussion of the Crofoot transform in Section~\ref{se:TTO}); in this case the space of standard symbols $\mathfrak{S}$ is precisely $K_\Theta+\overline{K_\Theta}$. We define then  $\Wth=\Theta^2/z$; $\Wth$ is also an inner function with $\Wth(0)=0$, and   $C_\Theta(K_\Theta+\overline{K_\Theta})=K_\Wth$ (remember that $C_\Theta$ is given by formula~\eqref{eq:conjugation}).

Let then $K_\Wth^0$ be the linear span (nonclosed) of the reproducing kernels $k^\Wth_\lambda$, $\lambda\in\bbD$. It can be checked that for every $\lambda\in\bbD$ we have $(k^\Theta_\lambda)^2\in K_\Wth$, and therefore the formula
\[
\DD_0 k^{\Phi}_\lambda=(k^\Theta_\lambda)^2
\]
defines an (unbounded) linear operator $\DD_0:K_\Wth^0\to K_\Wth$.

The result that is proved in~\cite{LR} is the following.

\begin{theorem}\label{th:rochberg HS}
With the above notations, the following assertions are true:
	\begin{enumerate}
		\item $\DD_0$ is a positive symmetric operator.  Its Friedrichs selfadjoint extension (see~\cite[Theorem X.23]{RS})  will be denoted by $\DD$; it has a positive root $\DD^{1/2}$.
		\item A TTO $A^\Theta_\phi$, with $\phi\in K_\Theta+\overline{K_\Theta}$, is a Hilbert--Schmidt operator if and only if $C_\Theta \phi$ is in the domain of $\DD^{1/2}$, and the Hilbert--Schmidt norm is $\|\DD^{1/2} (C_\Theta \phi)\|$. 
	\end{enumerate}
\end{theorem}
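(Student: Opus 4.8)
The plan is to reduce everything to one pairing identity. For $f,g\in K_\Theta\cap H^\infty$ and $\phi\in K_\Theta+\overline{K_\Theta}=\mathfrak{S}_\Theta$ (the equality because $\Theta(0)=0$), substituting $\phi=\Theta\bar z\,\overline{C_\Theta\phi}$ and $g=\Theta\bar z\,\overline{C_\Theta g}$ into $\<A^\Theta_\phi f,g\>=\int_\bbT\phi f\bar g\,dm$ gives
\[
\<A^\Theta_\phi f,g\>=\int_\bbT f\,(C_\Theta g)\,\overline{C_\Theta\phi}\,dm=\<f\cdot C_\Theta g,\;C_\Theta\phi\>,
\]
with $f\cdot C_\Theta g\in K^1_\Phi$ and $C_\Theta\phi\in K_\Phi$ (recall $C_\Theta(K_\Theta+\overline{K_\Theta})=K_\Phi$). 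Taking $\phi=C_\Theta k^\Phi_\lambda$, so $C_\Theta\phi=k^\Phi_\lambda$, the reproducing property of $K_\Phi$ yields $\<A^\Theta_{C_\Theta k^\Phi_\lambda}f,g\>=(f\cdot C_\Theta g)(\lambda)=\<f,k^\Theta_\lambda\>\<C_\Theta k^\Theta_\lambda,g\>$, i.e.\ the rank one TTO $A^\Theta_{C_\Theta k^\Phi_\lambda}=(C_\Theta k^\Theta_\lambda)\otimes k^\Theta_\lambda$ (using $(u\otimes v)x=\<x,v\>u$). Hence the antilinear map $W\colon K^0_\Phi\to\mathcal{HS}(K_\Theta)$, $W\Psi_0:=A^\Theta_{C_\Theta\Psi_0}$, is finite rank on $K^0_\Phi$, and from $\<a\otimes b,c\otimes d\>_{HS}=\<a,c\>\<d,b\>$ together with $\<C_\Theta u,C_\Theta v\>=\<v,u\>$ and $\<k^\Theta_\mu,k^\Theta_\nu\>=k^\Theta_\mu(\nu)$ one reads off, for $\Psi_0,\Psi_1\in K^0_\Phi$ and any TTO $A^\Theta_\phi$,
\[
\|W\Psi_0\|_{HS}^2=\<\DD_0\Psi_0,\Psi_0\>,\qquad \<A^\Theta_\phi,W\Psi_0\>_{HS}=\<\DD_0\Psi_0,C_\Theta\phi\>.
\]

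For part~(1): symmetry of $\DD_0$ follows from $\<\DD_0k^\Phi_\lambda,k^\Phi_\mu\>=(k^\Theta_\lambda(\mu))^2$ and the identity $\overline{k^\Theta_\mu(\lambda)}=k^\Theta_\lambda(\mu)$. For positivity I would in fact establish $\DD_0\ge I$: writing $t=\bar\lambda\mu$, $s=\overline{\Theta(\lambda)}\Theta(\mu)$ and using $\Phi=\Theta^2/z$, a short computation gives $\<(\DD_0-I)k^\Phi_\lambda,k^\Phi_\mu\>=(k^\Theta_\lambda(\mu))^2-k^\Phi_\lambda(\mu)=\dfrac{(t-s)^2}{t(1-t)^2}=\bigl(k^\Theta_\lambda(\mu)-1\bigr)^2\cdot\dfrac{1}{\bar\lambda\mu}$; here $k^\Theta_\lambda(\mu)-1=\<k^\Theta_\lambda,k^\Theta_\mu\>-\<k^\Theta_0,k^\Theta_0\>$ is a positive definite kernel (Cauchy--Schwarz, since $k^\Theta_0\equiv1$) and $1/(\bar\lambda\mu)$ is positive definite of rank one, so the product is positive definite by Schur's theorem. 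Thus $\DD_0$ is a densely defined non-negative symmetric operator (indeed $\ge I$), so its Friedrichs extension $\DD\ge I$ exists, $\DD^{1/2}\ge I$ is invertible with bounded inverse, hence $\DD^{1/2}\colon\mathrm{dom}\,\DD^{1/2}\to K_\Phi$ is a bijection, $K^0_\Phi$ is a form core for $\DD^{1/2}$, and $\|\DD^{1/2}\Psi_0\|^2=\<\DD_0\Psi_0,\Psi_0\>$ on $K^0_\Phi$.

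For part~(2) the key input is the \emph{density lemma}: $\overline{\mathrm{span}}\{(k^\Theta_\lambda)^2:\lambda\in\bbD\}=K_\Phi$, equivalently $\overline{\mathrm{ran}\,\DD_0}=K_\Phi$ (to be proved by expanding $(k^\Theta_\lambda)^2$ as a power series in $\bar\lambda$, its coefficients being $\sum_{m}\kappa_m\kappa_{N-m}$ with $\{\kappa_m\}$ the coefficients of $k^\Theta_\lambda$, which span a dense subspace of $K_\Theta$ because $\Theta(0)=0$). Granting it, let $\mathcal M_0$ be the HS-closure of $\mathrm{span}\{(C_\Theta k^\Theta_\lambda)\otimes k^\Theta_\lambda\}=W(K^0_\Phi)^{-}$; every HS-limit of TTOs is a TTO, and the orthogonal (HS-)projection onto $\mathcal M_0$ sends TTOs to TTOs, so if a TTO $A^\Theta_\psi$ lies in $\mathcal M_0^\perp$ then $\<\DD_0\Psi_0,C_\Theta\psi\>=\<A^\Theta_\psi,W\Psi_0\>_{HS}=0$ for all $\Psi_0\in K^0_\Phi$, whence $C_\Theta\psi=0$ by the density lemma; splitting an arbitrary HS-TTO $R$ as $P_{\mathcal M_0}R+(R-P_{\mathcal M_0}R)$ then forces $R\in\mathcal M_0$, so $\mathcal M_0$ is \emph{exactly} the set of HS-TTOs. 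Next, $\tilde W:=W\circ\DD^{-1/2}$ is a well-defined antilinear isometry on the dense subspace $\DD^{1/2}(K^0_\Phi)$ of $K_\Phi$ (by the identity $\|W\Psi_0\|_{HS}=\|\DD^{1/2}\Psi_0\|$), hence extends to an antiunitary $\tilde W\colon K_\Phi\to\mathcal M_0$; approximating $\zeta\in\mathrm{dom}\,\DD^{1/2}$ by $\zeta_k\in K^0_\Phi$ in the form-graph norm, so that $A^\Theta_{C_\Theta\zeta_k}\to A^\Theta_{C_\Theta\zeta}$ in the weak operator topology while $A^\Theta_{C_\Theta\zeta_k}=\tilde W(\DD^{1/2}\zeta_k)\to\tilde W(\DD^{1/2}\zeta)$ in HS-norm, identifies $\tilde W(\DD^{1/2}\zeta)=A^\Theta_{C_\Theta\zeta}$. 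Since $\DD^{1/2}$ is onto $K_\Phi$, this says $\mathcal M_0=\{A^\Theta_{C_\Theta\zeta}:\zeta\in\mathrm{dom}\,\DD^{1/2}\}$; combined with the previous sentence, an operator is an HS-TTO iff its (standard) symbol $\phi$ satisfies $C_\Theta\phi\in\mathrm{dom}\,\DD^{1/2}$, and then $\|A^\Theta_\phi\|_{HS}=\|\tilde W(\DD^{1/2}C_\Theta\phi)\|_{HS}=\|\DD^{1/2}(C_\Theta\phi)\|$, the matching $\phi=C_\Theta\zeta$ being forced by uniqueness of the standard symbol.

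The computations with the pairing identity and the HS inner product are routine, and $\DD_0\ge I$ drops out of the Schur-product trick. The genuine obstacle is the density lemma $\overline{\mathrm{span}}\{(k^\Theta_\lambda)^2\}=K_\Phi$: it is the only place where one needs real analytic information about $\Theta$, and it is exactly what makes $\mathcal M_0$ exhaust the HS-TTOs (and, read through $\DD_0$, what makes $\DD$ injective). A secondary care point throughout part~(2) is that $\DD^{1/2}$ is unbounded, so limits may only be taken along its form-core domain and via the closedness built into the Friedrichs construction, never by naive continuity.
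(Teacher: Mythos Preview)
The paper itself does not prove this theorem: it attributes the result to Lopatto--Rochberg~\cite{LR} and remarks only that ``the proof of these two theorems uses the theory of Hankel forms as developed in~\cite{PR}''. Your proposal is therefore not a reproduction of the paper's argument but a self-contained direct approach, and most of its ingredients are sound. The pairing identity $\<A^\Theta_\phi f,g\>=\<f\cdot C_\Theta g,C_\Theta\phi\>_{K_\Phi}$ is correct and does yield the rank-one identification $A^\Theta_{C_\Theta k^\Phi_\lambda}=(C_\Theta k^\Theta_\lambda)\otimes k^\Theta_\lambda$; the two displayed HS identities follow. Your Schur-product proof of $\DD_0\ge I$ is a genuinely nice strengthening not stated in~\cite{LR} (with the trivial caveat that the factor $1/(\bar\lambda\mu)$ is undefined at $\lambda=0$; but there $(k^\Theta_0)^2=1=k^\Phi_0$, so the kernel of $\DD_0-I$ vanishes identically when either argument is $0$, and positivity on $\bbD\setminus\{0\}$ suffices). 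The invertibility of $\DD^{1/2}$ that follows is exactly what makes the antiunitary $\tilde W$ surjective onto $\mathcal M_0$, so that piece of the argument is clean.

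The genuine gap is the one you flag yourself: the ``density lemma'' $\overline{\mathrm{span}}\{(k^\Theta_\lambda)^2:\lambda\in\bbD\}=K_\Phi$. Your sketch---expand $(k^\Theta_\lambda)^2$ in powers of $\bar\lambda$ and observe that the $\kappa_m=P_\Theta z^m$ span $K_\Theta$---does not close. The Taylor coefficients are the convolutions $c_N=\sum_{m}\kappa_m\kappa_{N-m}$, and knowing that $\{\kappa_m\}$ is total in $K_\Theta$ says nothing direct about $\{c_N\}$ being total in $K_\Phi$. Equivalently, orthogonality of $h\in K_\Phi$ to all $(k^\Theta_\lambda)^2$ translates (via your own pairing identity) into the vanishing of the symmetric anti-holomorphic function $F(\lambda,\mu)=\<k^\Theta_\lambda k^\Theta_\mu,h\>$ on the diagonal $\lambda=\mu$, and a symmetric holomorphic function of two variables vanishing on the diagonal need not vanish identically. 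One way to close the gap is to use the explicit formula recorded as Theorem~\ref{th:rochberg HS2}(2): if $\alpha=C_\Theta\phi=\alpha_1+\Theta\alpha_2\in K_\Phi$ satisfies $\<\alpha,(k^\Theta_\lambda)^2\>=0$ for all $\lambda$, then $(z\alpha_1)'=\Theta(z\alpha_2)'-\Theta'(z\alpha_2)$ identically on $\bbD$, and one must argue from this ODE-type relation (together with $\alpha_1,\alpha_2\in K_\Theta$ and $\alpha_2\perp C_\Theta 1$) that $\alpha_1=\alpha_2=0$. This is precisely the kind of analytic input that the Hankel-forms machinery of~\cite{PR} packages, and it is where your outline currently stops short. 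Until that lemma is established, the implication ``HS TTO $\Rightarrow$ $C_\Theta\phi\in\mathrm{dom}\,\DD^{1/2}$'' is not proved, since your argument for $\mathcal M_0=\{\text{HS TTOs}\}$ rests entirely on it.
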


Since the square of a reproducing kernel is also a reproducing kernel, let us denote by $\HH^2_\Theta$ the reproducing kernel Hilbert space that has as kernels $(k^\Theta_\lambda)^2$ ($\lambda\in\bbD$). It is a space of analytic functions defined on $\bbD$, and it provides another characterization of Hilbert--Schmidt TTOs obtained in~\cite{LR}.

\begin{theorem}\label{th:rochberg HS2}
	Define, for $\phi\in K_\Theta+\overline{K_\Theta} $, 
	\begin{equation*}\label{eq:definition of DD}
	(\Delta \phi)(\lambda)=\<C_\Theta \phi, (k^\Theta_\lambda)^2\>.
	\end{equation*}
	Then:
	\begin{enumerate}
		\item $\Delta\phi$ is a function analytic in $\bbD$, which coincides on $K^0_\Phi$ with $\DD(C_\Theta\phi)$.
		\item An alternate formula for $\Delta\phi$ is 
		\begin{equation*}\label{eq:great formula for DD}
		(\Delta\phi)(\lambda)= (z\alpha)'(\lambda)-2\Theta(\lambda)(z\alpha_2)'(\lambda),
		\end{equation*}
		where $C_\Theta\phi=\alpha=\alpha_1+\Theta\alpha_2$, with $\alpha_1, \alpha_2\in K_\Theta$.
		\item $A^\Theta_\phi$ is a Hilbert--Schmidt operator if and only if $\Delta\phi\in \HH^2_\Theta$, and the Hilbert--Schmidt norm is $\|\Delta\phi\|_{\HH^2_\Theta}$.
	\end{enumerate}
\end{theorem}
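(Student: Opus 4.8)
The plan is to derive all three parts from Theorem~\ref{th:rochberg HS}, treating the operator $\DD$ (the Friedrichs extension of $\DD_0$) and its Hilbert--Schmidt criterion as known and reducing everything else to reproducing-kernel bookkeeping. To begin with part (1), I would note that $(\Delta\phi)(\lambda)=\langle C_\Theta\phi,(k^\Theta_\lambda)^2\rangle$ is well defined, because $C_\Theta\phi\in K_\Wth$ for $\phi\in K_\Theta+\overline{K_\Theta}$ and $(k^\Theta_\lambda)^2\in K_\Wth$ for every $\lambda\in\bbD$ (both recalled above). Analyticity of $\Delta\phi$ is then immediate: $\lambda\mapsto(k^\Theta_\lambda)^2$ is anti-holomorphic (from the explicit formula for $k^\Theta_\lambda$) and the inner product is conjugate-linear in its second slot. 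To identify $\Delta\phi$ with $\DD(C_\Theta\phi)$ when $C_\Theta\phi=\sum_i c_i k^\Wth_{\lambda_i}\in K^0_\Wth$, I would compute $\DD(C_\Theta\phi)=\DD_0(C_\Theta\phi)=\sum_i c_i(k^\Theta_{\lambda_i})^2$ on one side, and on the other use the reproducing property in $K_\Wth$ together with the Hermitian symmetry $\overline{k^\Theta_\lambda(\mu)}=k^\Theta_\mu(\lambda)$ of the model-space kernel to obtain $(\Delta\phi)(\mu)=\sum_i c_i(k^\Theta_{\lambda_i}(\mu))^2$, the same function.

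For part (2), I would expand $(k^\Theta_\lambda)^2=(1-\bar\lambda z)^{-2}-2\,\overline{\Theta(\lambda)}\,\Theta(1-\bar\lambda z)^{-2}+\overline{\Theta(\lambda)}^{\,2}\,\Theta^2(1-\bar\lambda z)^{-2}$, record the elementary power-series identity $\langle f,(1-\bar\lambda z)^{-2}\rangle=(zf)'(\lambda)$ for $f\in H^2$, and pair against $\alpha=C_\Theta\phi=\alpha_1+\Theta\alpha_2$ with $\alpha_1,\alpha_2\in K_\Theta$. Since $\Theta(0)=0$ gives $\Theta^2=z\Wth$, the last term lies in $\Wth H^2$ and vanishes against $\alpha\in K_\Wth$; in the middle term the $\alpha_1$ part dies because $\alpha_1\perp\Theta H^2$, while $\langle\Theta\alpha_2,\Theta(1-\bar\lambda z)^{-2}\rangle=\langle\alpha_2,(1-\bar\lambda z)^{-2}\rangle=(z\alpha_2)'(\lambda)$ by the isometry of $M_\Theta$; the first term gives $(z\alpha)'(\lambda)$. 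Collecting these, and using $\overline{\overline{\Theta(\lambda)}}=\Theta(\lambda)$, produces the stated formula.

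For part (3), by Theorem~\ref{th:rochberg HS}(2) it is enough to show that $C_\Theta\phi\in\mathrm{dom}(\DD^{1/2})$ if and only if $\Delta\phi\in\HH^2_\Theta$, and that then $\|\DD^{1/2}(C_\Theta\phi)\|=\|\Delta\phi\|_{\HH^2_\Theta}$. The bridge is the identity $\langle\DD_0\psi,\DD_0\chi\rangle_{\HH^2_\Theta}=\langle\DD_0\psi,\chi\rangle_{K_\Wth}$ for $\psi,\chi\in K^0_\Wth$, verified on the generators $k^\Wth_\lambda,k^\Wth_\mu$ (both sides equal $(k^\Theta_\lambda(\mu))^2$ by the reproducing property in $\HH^2_\Theta$ and in $K_\Wth$ respectively); in particular $\|\DD_0\psi\|_{\HH^2_\Theta}^2=\langle\DD_0\psi,\psi\rangle_{K_\Wth}=\|\DD^{1/2}\psi\|^2$ on $K^0_\Wth$. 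For the forward direction, I would pick $\psi_n\in K^0_\Wth$ tending to $C_\Theta\phi$ in the form norm of $\DD$; this identity makes $(\DD_0\psi_n)$ Cauchy, hence convergent, in $\HH^2_\Theta$, and since convergence in a reproducing kernel space is pointwise while the pointwise limit is $\Delta\phi$ (by part (1), using $\psi_n\to C_\Theta\phi$ in $K_\Wth$), one gets $\Delta\phi\in\HH^2_\Theta$ with $\|\Delta\phi\|_{\HH^2_\Theta}=\lim_n\|\DD_0\psi_n\|_{\HH^2_\Theta}=\lim_n\|\DD^{1/2}\psi_n\|=\|\DD^{1/2}(C_\Theta\phi)\|$. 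For the converse, if $\Delta\phi\in\HH^2_\Theta$ then for $\chi=\sum_i c_i k^\Wth_{\mu_i}\in K^0_\Wth$ one has $\langle C_\Theta\phi,\DD_0\chi\rangle_{K_\Wth}=\sum_i\bar c_i(\Delta\phi)(\mu_i)=\langle\Delta\phi,\DD_0\chi\rangle_{\HH^2_\Theta}$, hence $|\langle C_\Theta\phi,\DD_0\chi\rangle_{K_\Wth}|\le\|\Delta\phi\|_{\HH^2_\Theta}\,\|\DD^{1/2}\chi\|$, and the standard description of the form domain of the Friedrichs extension (cf.\ \cite[Theorem X.23]{RS}) then puts $C_\Theta\phi$ in $\mathrm{dom}(\DD^{1/2})$ with $\|\DD^{1/2}(C_\Theta\phi)\|\le\|\Delta\phi\|_{\HH^2_\Theta}$; together with the forward inequality this is equality.

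The hard part will be precisely this last step of part (3): identifying membership in the concrete reproducing kernel space $\HH^2_\Theta$ with membership in the abstractly defined form domain $\mathrm{dom}(\DD^{1/2})$, and matching the two norms \emph{exactly} rather than up to the $\|\cdot\|_{K_\Wth}^2$ term built into the Friedrichs form norm. Because $\DD_0$ need not be injective, the argument must be carried through the seminorm $\psi\mapsto\|\DD^{1/2}\psi\|=\langle\DD_0\psi,\psi\rangle^{1/2}$ rather than through $\DD_0$ itself; the form-domain characterization invoked above is exactly what makes this go through, and it is essentially the only place where more than routine verification is required.
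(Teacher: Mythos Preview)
The paper itself does not prove this theorem: it is quoted from \cite{LR}, with only the remark that ``the proof of these two theorems uses the theory of Hankel forms as developed in~\cite{PR}.'' Your route---deducing Theorem~\ref{th:rochberg HS2} from Theorem~\ref{th:rochberg HS} by direct reproducing-kernel computations---is therefore genuinely different from what the paper indicates, and more self-contained. Parts~(1) and~(2) are correct as written; the power-series identity $\langle f,(1-\bar\lambda z)^{-2}\rangle=(zf)'(\lambda)$ and the orthogonality bookkeeping you describe do yield the stated formula.

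In part~(3), the forward direction and the bridge identity $\langle\DD_0\psi,\DD_0\chi\rangle_{\HH^2_\Theta}=\langle\DD_0\psi,\chi\rangle_{K_\Wth}$ are fine, and surjectivity of $U\colon\mathrm{dom}(\DD^{1/2})\to\HH^2_\Theta$ follows as you say from density of the reproducing kernels. The converse, however, has a gap. The estimate $|\langle C_\Theta\phi,\DD_0\chi\rangle|\le\|\Delta\phi\|_{\HH^2_\Theta}\|\DD^{1/2}\chi\|$ does \emph{not} by itself force $C_\Theta\phi$ into the form domain: rewriting it as $|\langle C_\Theta\phi,\DD^{1/2}\eta\rangle|\le\|\Delta\phi\|_{\HH^2_\Theta}\|\eta\|$ for $\eta=\DD^{1/2}\chi$ only gives the adjoint bound on $\eta\in\DD^{1/2}(K^0_\Wth)$, and this set is not known to be a core for $\DD^{1/2}$. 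The Friedrichs construction in \cite[Theorem~X.23]{RS} describes $\mathrm{dom}(\DD^{1/2})$ as the form-norm closure of $K^0_\Wth$, not via such a dual inequality. What your surjectivity argument actually produces is some $g\in\mathrm{dom}(\DD^{1/2})$ with $Ug=\Delta\phi$, whence $g-C_\Theta\phi\perp\mathrm{ran}\,\DD_0=\mathrm{span}\{(k^\Theta_\lambda)^2\}$ in $K_\Wth$. To conclude $g=C_\Theta\phi$ you need this span to be \emph{dense} in $K_\Wth$---equivalently, that $\langle h,(k^\Theta_\lambda)^2\rangle\equiv 0$ forces $h=0$. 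This is the missing lemma; your formula from part~(2), which turns the condition into $(zh)'=2\Theta\,(zh_2)'$, is the natural tool to prove it, but the argument must actually be carried out. Without it the converse implication, and hence the exact norm identity, remains incomplete.
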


The proof of these two theorems uses 
the theory of Hankel forms as developed in~\cite{PR}. Admittedly, none of the characterizations is very explicit.

For the case of one-component functions, a conjecture is proposed in~\cite[4.3]{Be} for the characterization of Schatten--von Neumann TTOs. It states essentially that a truncated Toeplitz operator is in $S_p$ if and only if it has at least one symbol $\phi$ in the Besov space $B_{pp}^{1/p}$ (note that this would not necessarily be the standard symbol). This last space admits several equivalent characterizations; for instance, if we define, for $\tau\in\bbT$, $\Delta_\tau f(z)=f(\tau z)-f(z)$, then 
\[
B_{pp}^{1/p}=\left\{f\in L^p: \int_\bbT  \frac{\|\Delta_\tau f\|^p_p}{|1-\tau|^2}\, dm(\tau)<\infty \right\}.
\]
The conjecture is suggested by the similar result in the case of Hankel operators~\cite[Chapter 6]{Peller}. It is true if $\Theta(z)=e^{\frac{z+1}{z-1}}$, as shown in~\cite{RR}.

Bessonov also proposes some alternate characterizations in terms of Clark measures; we will not pursue this approach here.

\section{Invertibility and Fredholmness}

Invertibility and, more generally, spectrum of a TTO has been known since several decades in the case of analytic symbols. The main result here is stated in the next theorem (see, for instance, ~\cite[2.5.7]{NikOfs2}). It essentially says that $\sigma(A_\phi^\Theta)=\phi(\Sp(\Theta))$, but we have to give a precise meaning to the quantity on the right, since $\Sp(\Theta)$ (as defined by~\eqref{eq:def spectrum of theta}) intersects the set $\bbT$, where $\phi\in H^\infty$ is defined only almost everywhere.

\begin{theorem}\label{th:spectral mapping theorem}
	If $\phi\in H^\infty$, then 
		\[
	 	\begin{split}
	 	&	\sigma(A^\Theta_\phi)=\{\zeta\in\bbC : \inf_{z\in\bbD} (|\Theta(z)|+|\phi(z)-\zeta|)=0   \},\\
	 	&	\{\lambda\in\bbC: \lambda=\phi(z), \text{ where } z\in\bbD,\ \Theta(z)=0  \} \subset \sigma_p(A^\Theta_\phi),\\
	 &	\sigma_e(A^\Theta_\phi)=\{\zeta \in\bbC: \liminf_{z\in\bbD, |z|\to 1} (|\Theta(z) |+| \phi(z)-\zeta|=0 \}.
	 	\end{split}
		\]
\end{theorem}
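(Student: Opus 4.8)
The three formulas are statements about, respectively, the spectrum, a lower bound for the point spectrum, and the essential spectrum of $A^\Theta_\phi$ for analytic $\phi$. The key is that $\phi\in H^\infty$ acts on $K_\Theta$ via the functional calculus for the model operator $A^\Theta_z$ (the compressed shift), since, as recalled in Section~\ref{se:TTO}, $A^\Theta_\phi\in\{A^\Theta_z\}'$ whenever $\phi\in H^2\cap\TT(\Theta)$, and more precisely $A^\Theta_\phi=\phi(A^\Theta_z)$ in the $H^\infty$-calculus sense. So the whole theorem reduces to knowing the spectrum and essential spectrum of $A^\Theta_z$ together with a spectral-mapping principle for the $H^\infty$-calculus of $A^\Theta_z$, which is a contraction with $\|A^\Theta_z\|\le 1$ and $\sigma(A^\Theta_z)=\Sp(\Theta)\cup\{\lambda\in\bbD:\Theta(\lambda)=0\}$-type description.

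\textbf{Step 1: identify $\sigma(A^\Theta_z)$.} First I would recall (or reprove) the classical fact that $A^\Theta_z-\lambda I$ is invertible on $K_\Theta$ exactly when $\lambda\notin\Sp(\Theta)$; equivalently, $\sigma(A^\Theta_z)=\{\lambda\in\overline{\bbD}:\liminf_{z\to\lambda}|\Theta(z)|=0\}$, which for $\lambda\in\bbD$ means $\Theta(\lambda)=0$ and for $\lambda\in\bbT$ is the condition $\lambda\in\rho(\Theta)$. The point spectrum is $\{\lambda\in\bbD:\Theta(\lambda)=0\}$, with eigenvector the reproducing kernel $k^\Theta_\lambda$ (a quick check: $A^\Theta_{\bar z}k^\Theta_\lambda=\bar\lambda k^\Theta_\lambda$ when $\Theta(\lambda)=0$, so $A^\Theta_z$ has eigenvalue $\lambda$ via the conjugation $C_\Theta$ or via a direct computation). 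For the essential spectrum one uses that $A^\Theta_z$ is unitarily equivalent to a functional-model contraction whose defect indices are $1$; the essential spectrum of such an operator is $\Sp(\Theta)\cap\bbT=\rho(\Theta)$ (the $\bbD$-part of the spectrum consists of isolated eigenvalues of finite multiplicity, hence does not contribute to $\sigma_e$).

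\textbf{Step 2: spectral mapping for the $H^\infty$-calculus.} With $A^\Theta_z$ a c.n.u. contraction and $\phi\in H^\infty$, the operator $\phi(A^\Theta_z)=A^\Theta_\phi$ obeys the spectral mapping theorem for this calculus: $\sigma(A^\Theta_\phi)=\phi^\sim(\sigma(A^\Theta_z))$ where $\phi^\sim$ denotes the natural extension of $\phi$ to the spectrum, made precise by the ``cluster-value'' formula $\sigma(A^\Theta_\phi)=\{\zeta:\inf_{z\in\bbD}(|\Theta(z)|+|\phi(z)-\zeta|)=0\}$. Concretely: $\zeta\notin\sigma(A^\Theta_\phi)$ iff $(\phi-\zeta)^{-1}\in H^\infty$ relative to the spectrum of $A^\Theta_z$, i.e. iff $\phi-\zeta$ is ``bounded away from $0$ where $\Theta$ is small'', which is exactly $\inf_{z\in\bbD}(|\Theta(z)|+|\phi(z)-\zeta|)>0$ by a normal-families/corona-type argument (one direction builds the resolvent as $A^\Theta_{1/(\phi-\zeta)}$ after correcting by a multiple of $\Theta$; the other exhibits an approximate eigenvector along a sequence $z_n$ with $|\Theta(z_n)|\to0$, $\phi(z_n)\to\zeta$, using normalized reproducing kernels $k^\Theta_{z_n}/\|k^\Theta_{z_n}\|$). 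The middle inclusion then follows immediately: if $\Theta(z)=0$ then $k^\Theta_z\in K_\Theta$ is a genuine eigenvector of $A^\Theta_\phi$ with eigenvalue $\phi(z)$ (apply $\phi$ to the eigen-relation for $A^\Theta_z$, or note $A^{\Theta\,*}_\phi k^\Theta_z=\overline{\phi(z)}\,k^\Theta_z$), giving $\phi(z)\in\sigma_p(A^\Theta_\phi)$.

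\textbf{Step 3: the essential spectrum.} For $\sigma_e$ I would run the same argument modulo compact operators. The Calkin image of $A^\Theta_z$ has spectrum $\rho(\Theta)=\Sp(\Theta)\cap\bbT$ (Step~1), and the $H^\infty$-calculus descends to the Calkin algebra, so $\sigma_e(A^\Theta_\phi)=\{\zeta:\inf$ over cluster values of $\phi$ on $\rho(\Theta)$ of $|\phi(z)-\zeta|=0\}$, which unwinds to $\{\zeta:\liminf_{|z|\to1}(|\Theta(z)|+|\phi(z)-\zeta|)=0\}$ since forcing $|z|\to1$ is precisely what restricts the cluster set to the $\bbT$-part of $\Sp(\Theta)$. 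Equivalently, $A^\Theta_\phi-\zeta I$ is Fredholm iff it is invertible modulo compacts iff $\phi-\zeta$ is bounded below on a neighbourhood (in $\overline{\bbD}$) of $\rho(\Theta)$, using that the $\bbD$-spectrum of $A^\Theta_z$ is discrete with finite multiplicities and hence a finite-rank perturbation issue near any $\zeta$ not in the circle cluster set.

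\textbf{Main obstacle.} The routine parts (eigenvectors from reproducing kernels, the easy inclusions) are quick; the real work is the precise spectral mapping theorem for the $H^\infty$-calculus of the compressed shift, i.e. proving both that $\inf_{z\in\bbD}(|\Theta(z)|+|\phi(z)-\zeta|)>0$ forces invertibility of $A^\Theta_\phi-\zeta$ (constructing the inverse inside $\TT(\Theta)$, which requires knowing $1/(\phi-\zeta)$ can be realized as a bounded symbol on $K_\Theta$ after subtracting a suitable $\Theta$-multiple — a small corona-type correction) and the sharpness of the essential-spectrum formula (controlling the finite-multiplicity $\bbD$-eigenvalues so they genuinely drop out mod compacts). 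I expect to cite the functional-model literature (e.g. \cite{NikOfs2}) for the invertibility half and to supply only the approximate-eigenvector construction explicitly, since that is where the geometry of $\Sp(\Theta)$ enters transparently.
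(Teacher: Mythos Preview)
The paper does not prove this theorem: it is stated as a known result with a reference to \cite[2.5.7]{NikOfs2}, so there is no ``paper's own proof'' to compare against. Your outline is precisely the standard functional-model argument one finds in that reference---reduce to the compressed shift $A^\Theta_z$, use $A^\Theta_\phi=\phi(A^\Theta_z)$ in the $H^\infty$-calculus, and invoke the Liv\v{s}ic--Moeller/Sz.-Nagy--Foia\c{s} spectral description of the model operator together with the corresponding spectral mapping theorem---so your plan is entirely in line with what the paper defers to. One small remark: in Step~2 the observation $A^{\Theta\,*}_\phi k^\Theta_z=\overline{\phi(z)}\,k^\Theta_z$ by itself only gives $\overline{\phi(z)}\in\sigma_p\big((A^\Theta_\phi)^*\big)$, not $\phi(z)\in\sigma_p(A^\Theta_\phi)$; to close the gap you should use the $C_\Theta$-symmetry of truncated Toeplitz operators (so that $C_\Theta k^\Theta_z$ is a genuine eigenvector of $A^\Theta_\phi$), which is exactly your ``via the conjugation $C_\Theta$'' alternative.
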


As noted above, the class of TTOs is invariant by conjugation, and therefore we may obtain corresponding characterizations for coanalytic symbols. But what happens for more general TTOs?
Again a result in~\cite{AC} seems to be historically the first one. It deals with the essential spectrum of a TTO with continous symbol. More precisely, it states that 
\[
\sigma_e(A^\Theta_\phi)=\phi(\rho(\Theta)).
\]
There is a more extensive development of these ideas in~\cite{MR3203060}, which, in particular, studies the $C^*$-algebra generated by TTOs with continuous symbols.

The above characterization of the essential spectrum is extended in~\cite{Be} to symbols in $\Cont+H^\infty$. Since functions $\phi\in\Cont+H^\infty$ are defined only almost everywhere on $\bbT$, one should explain the meaning of the right hand side. The following is the precise statement of Bessonov's result; its form is similar to that of Theorem~\ref{th:spectral mapping theorem}.

\begin{theorem}\label{th:bessonov essential spectrum}
	Suppose $\phi\in\Cont+H^\infty$. Then
	\[
	\sigma_e(A^\Theta_\phi)=\{\zeta \in\bbC: \liminf_{z\in\bbD, |z|\to 1} (|\Theta(z) |+|\hat \phi(z)-\zeta|=0 \}
	\](for the definition of $\hat{\phi}$, see~\eqref{eq:poisson transform}).
\end{theorem}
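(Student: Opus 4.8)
\textbf{Proof plan for Theorem~\ref{th:bessonov essential spectrum}.}
The plan is to reduce the statement for a symbol $\phi\in\Cont+H^\infty$ to the already-known case of an $H^\infty$ symbol (Theorem~\ref{th:spectral mapping theorem}) by peeling off the continuous part modulo compact operators. Write $\phi=u+v$ with $u\in\Cont$ and $v\in H^\infty$; the Poisson extension $\hat\phi$ then equals $u+v$ on $\bbD$ (the harmonic extension of a continuous function agreeing with it on $\bbT$, plus the analytic $v$), so $\hat\phi-v$ extends continuously to $\overline\bbD$ and vanishes nowhere discontinuously. The key structural identity is formula~\eqref{eq:relation TTO Hankel}, $A_\phi=(\Theta H_{\bar\Theta\phi}-H_\phi)|K_\Theta$, together with the additivity $A_{u+v}=A_u+A_v$. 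First I would show that $A_u$ is compact whenever $u\in\Cont$: indeed by Hartman's theorem $H_u$ and $H_{\bar\Theta u}$ are compact (their symbols $u$, $\bar\Theta u$ lie in $\Cont+H^\infty$), so by~\eqref{eq:relation TTO Hankel} $A_u$ is compact. Hence $A_\phi=A_v+K$ with $K$ compact, and therefore $\sigma_e(A^\Theta_\phi)=\sigma_e(A^\Theta_v)$.

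Next I would apply Theorem~\ref{th:spectral mapping theorem} to the analytic symbol $v$, giving
\[
\sigma_e(A^\Theta_v)=\{\zeta\in\bbC:\ \liminf_{z\in\bbD,\,|z|\to1}(|\Theta(z)|+|v(z)-\zeta|)=0\}.
\]
It remains to replace $v(z)$ by $\hat\phi(z)$ inside this $\liminf$. Since $\hat\phi=\hat u+v$ on $\bbD$ and $\hat u$ is the Poisson extension of the \emph{continuous} function $u$, the function $\hat u$ extends continuously to $\overline\bbD$; in particular $\hat u$ is bounded and, more importantly, for any boundary point where we take the radial/nontangential limit, $\hat u(z)$ converges. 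The point is that $|\hat\phi(z)-\zeta|$ and $|v(z)-\zeta'|$ differ by a quantity controlled by $|\hat u(z)|$, which is continuous up to the boundary — but this alone is not enough, since $\hat u$ need not vanish. The correct bookkeeping is: the condition $\liminf_{|z|\to1}(|\Theta(z)|+|\hat\phi(z)-\zeta|)=0$ means there is a sequence $z_n\to\bbT$ with $\Theta(z_n)\to0$ and $\hat\phi(z_n)\to\zeta$; passing to a subsequence so that $z_n$ converges to some $\tau\in\bbT$, continuity of $\hat u$ gives $\hat u(z_n)\to u(\tau)$, hence $v(z_n)\to\zeta-u(\tau)$, i.e. $\zeta\in u(\tau)+\sigma_e(A_v)$-type condition at $\tau$. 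One must verify that $\tau\in\rho(\Theta)$ (forced by $\Theta(z_n)\to0$ with $z_n\to\tau$), and conversely that any such configuration can be realized, using that $\rho(\Theta)$ supports sequences along which $\Theta\to0$.

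The hard part will be handling the circle points $\tau\in\bbT$ cleanly: $\phi\in\Cont+H^\infty$ is only defined a.e.\ on $\bbT$, $v\in H^\infty$ has boundary values only a.e., and Theorem~\ref{th:spectral mapping theorem} is phrased entirely in terms of the \emph{interior} behaviour precisely to sidestep this. So the whole argument should be run inside $\bbD$, never invoking boundary values of $v$ or $\phi$; the only genuinely new input is the continuity of $\hat u$ on $\overline\bbD$, which lets one "factor out" the perturbation $u$ along any approach sequence. I expect the two inclusions to be symmetric once this is set up: for $\subseteq$, extract a convergent subsequence $z_n\to\tau$ and use continuity of $\hat u$ to convert a witnessing sequence for $\hat\phi$ into one for $v$; for $\supseteq$, reverse the substitution. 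A minor technical point to check is that the compact perturbation argument is legitimate for the possibly-unbounded a priori definition of $A_\phi$ — but since $\phi\in\Cont+H^\infty\subset L^\infty$, all operators involved are genuinely bounded and Weyl's theorem on invariance of the essential spectrum under compact perturbations applies directly.
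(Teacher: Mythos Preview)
The paper does not give a proof of Theorem~\ref{th:bessonov essential spectrum}; it simply quotes the result from Bessonov~\cite{Be}. So there is no in-paper argument to compare with, and your proposal must be judged on its own.

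The central step of your plan is wrong. You assert that $A_u$ is compact for every $u\in\Cont$, arguing via~\eqref{eq:relation TTO Hankel} that both $H_u$ and $H_{\bar\Theta u}$ are compact because ``their symbols $u$, $\bar\Theta u$ lie in $\Cont+H^\infty$.'' The first is fine, but the second is false: $\bar\Theta u$ need not belong to $\Cont+H^\infty$. Already for $u\equiv 1$ one has $\bar\Theta\in\Cont+H^\infty$ only when $H_{\bar\Theta}$ is compact, and a short computation shows $\|H_{\bar\Theta}f\|=\|P_\Theta f\|$, so $H_{\bar\Theta}$ is compact precisely when $K_\Theta$ is finite-dimensional, i.e.\ when $\Theta$ is a finite Blaschke product. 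More decisively, Theorem~\ref{th:tto with continuous symbols} in the paper states that for continuous $u$ the operator $A^\Theta_u$ is compact \emph{if and only if} $u|\rho(\Theta)=0$. So for a generic continuous $u$ the operator $A_u$ is not compact, and your reduction $\sigma_e(A_\phi)=\sigma_e(A_v)$ collapses.

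This is not a cosmetic gap: the whole architecture rests on peeling off the continuous part as a compact perturbation, and that is simply unavailable. You partially sense the difficulty later, when you notice that replacing $v$ by $\hat\phi$ inside the $\liminf$ introduces a $\tau$-dependent shift $u(\tau)$; that observation is in fact a symptom of the same obstruction. Had $A_u$ been compact, Theorem~\ref{th:tto with continuous symbols} would force $u|\rho(\Theta)=0$ and the shift would vanish---but you cannot assume this. A correct proof must keep track of the contribution of $u$ on $\rho(\Theta)$ (for instance via the $C^*$-algebra description of TTOs with continuous symbol in~\cite{MR3203060}, or via the Fredholm theory in~\cite{Be}); the compact-perturbation shortcut does not exist.
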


It is harder to find criteria for invertibility of TTOs with nonanalytic symbols. The next part of the section uses embedding measures to obtain some partial results. We start with a statement which is essentially about bounded below TTOs.

\begin{theorem}\label{th:bb}
	Let $A$ be a (bounded) TTO, and let $\mu$ a complex measure such that $A=A_\mu$. 
	\begin{enumerate}
		\item
		If $A$ is bounded below, then $\iota_\mu$ is also bounded below, i.e. there exists $C>0$ (depending only on $\mu$ and $\Theta$) such that 
		\[\int_{\mathbb T}|f|^2 dm\leq C\int_{\mathbb T}|\iota_\mu(f)|^2d|\mu|\]
		for all $f\in K_\Theta$.  
		\item Suppose $A\in \TT(\Theta)^+$ and let $\mu\in \CC_2(\Theta)^+$ such that $A=A_\mu$. 
		The following assertions are equivalent:
		\begin{enumerate}
			\item the operator $A$ is invertible;
			\item there exists $C>0$ (depending only on $\mu$ and $\Theta$) such that 
			\[\int_{\mathbb T}|f|^2 dm\leq C\int_{\mathbb T}|\iota_\mu(f)|^2d\mu\]
			for all $f\in K_\Theta$.  
		\end{enumerate}
	\end{enumerate}

\end{theorem}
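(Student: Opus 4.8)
The key algebraic identity connecting $A_\mu$ to the embedding is~\eqref{eq:tto by measure}, namely $\langle A_\mu f, g\rangle = \int f\bar g\, d\mu = \langle \iota_\mu f, \iota_\mu g\rangle_{L^2(\mu)}$ for $f,g\in K_\Theta$. The plan is to exploit this identity to transfer the analytic estimates between $A$ and $\iota_\mu$.

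\textbf{Part (1).} First I would observe that for a \emph{positive} operator one would simply take $g=f$ and read off $\langle Af,f\rangle = \|\iota_\mu f\|_{L^2(|\mu|)}^2$; but here $A$ is an arbitrary bounded-below TTO and $\mu$ a complex (signed) measure, so $\int f\bar f\, d\mu$ need not equal $\int|f|^2\, d|\mu|$. The natural route is polarization together with Cauchy--Schwarz: write $|\langle A f, g\rangle| = |\int f\bar g\, d\mu| \le \int |f|\,|g|\, d|\mu| \le \|\iota_\mu f\|_{L^2(|\mu|)}\,\|\iota_\mu g\|_{L^2(|\mu|)} \le \|\iota_{\mu}\|\,\|\iota_\mu f\|_{L^2(|\mu|)}\,\|g\|_2$. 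Taking the supremum over $\|g\|_2=1$ gives $\|Af\|_2 \le \|\iota_\mu\|\,\|\iota_\mu f\|_{L^2(|\mu|)}$. Since $A$ is bounded below, $\|f\|_2 \le c\|Af\|_2 \le c\|\iota_\mu\|\,\|\iota_\mu f\|_{L^2(|\mu|)}$; squaring yields the displayed inequality with $C = c^2\|\iota_\mu\|^2$, a constant depending only on $\mu$ and $\Theta$ (note $\|\iota_\mu\|$ depends on $\mu$ and $\Theta$, and $c$ on $A$, hence on $\mu$ and $\Theta$).

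\textbf{Part (2).} Here $A\in\TT(\Theta)^+$ and $\mu\in\CC_2^+(\Theta)$, and the identity becomes the clean $\langle Af, f\rangle = \|\iota_\mu f\|_{L^2(\mu)}^2$. For (a)$\Rightarrow$(b): $A$ invertible and positive means $A$ is bounded below, so $\|f\|_2^2 \le \|A^{-1}\|\,\langle Af,f\rangle = \|A^{-1}\|\,\|\iota_\mu f\|_{L^2(\mu)}^2$, giving (b) with $C=\|A^{-1}\|$. For (b)$\Rightarrow$(a): the hypothesis reads $\|f\|_2^2 \le C\,\langle Af, f\rangle$ for all $f\in K_\Theta$, which for a bounded positive operator is exactly the statement that $A \ge C^{-1}I$, hence $A$ is invertible (its spectrum is bounded away from $0$). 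The one point needing a word of care is that~\eqref{eq:tto by measure} is stated for $f,g$ in $K_\Theta$ (or a dense subspace); since $A$ and $\iota_\mu$ are bounded and $K_\Theta\cap H^\infty$ is dense, both sides extend continuously, so the identity holds on all of $K_\Theta$.

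\textbf{Main obstacle.} The only genuinely nontrivial point is part~(1): the passage from the \emph{sesquilinear} form $\int f\bar g\, d\mu$ (which controls $A$) to the \emph{quadratic} quantity $\int|f|^2\, d|\mu|$ (which controls $\iota_\mu$) when $\mu$ is signed/complex. The inequality $|\int f\bar g\, d\mu| \le \int|f||g|\,d|\mu|$ is the right tool, but one should make sure it is deployed in the correct direction — we need a \emph{lower} bound on $\|\iota_\mu f\|$, and we get it by chaining: lower bound on $\|Af\|$ (from bounded-belowness) $\le$ (via the estimate just derived) an expression involving $\|\iota_\mu f\|$. Everything else is routine functional analysis.
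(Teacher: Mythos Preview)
Your proof is correct and follows essentially the same approach as the paper. Part~(1) is identical: bound $|\langle A_\mu f,g\rangle|$ by Cauchy--Schwarz in $L^2(|\mu|)$, take the supremum over $\|g\|_2=1$, and combine with bounded-belowness. In part~(2) the paper deduces (a)$\Rightarrow$(b) by invoking part~(1) (since $A=A^*$ invertible is equivalent to bounded below) and proves (b)$\Rightarrow$(a) by the norm estimate $\|A_\mu f\|_2\ge |\langle A_\mu f,f/\|f\|_2\rangle|$, whereas you use the equivalent spectral formulations $\|f\|_2^2\le \|A^{-1}\|\langle Af,f\rangle$ and $A\ge C^{-1}I$ directly; these are cosmetic differences only.
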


\begin{proof} (1)
	By definition of $A_\mu$, for all $f,g\in K_\Theta$, we have 
	\[ \left|\langle A_\mu f,g\rangle \right| = \left| \int_{\mathbb T} \iota_\mu (f)\overline{\iota_\mu (g)}d\mu \right| \leq  \int_{\mathbb T} |\iota_\mu (f)||\iota_\mu (g)|d|\mu|. \]
	The Cauchy--Schwarz inequality  implies that
	\[ \left|\langle A_\mu f,g\rangle \right| \leq \|\iota_\mu (f)\|_{L^2(|\mu|)} \|\iota_\mu (g)\|_{L^2(|\mu|)}\leq  \|\iota_\mu (f)\|_{L^2(|\mu|)} \|\iota_\mu\|\|g\|_2.\]
	Then, taking the supremum over all $g\in K_\Theta$ of unit norm, we get:
	\[\|A_\mu f\|_2\leq \|\iota_\mu\|\|\iota_\mu (f)\|_{L^2(|\mu|)} .\]
	Now, if $A_\mu$ is bounded below, there exists $C>0$ such that 
	\[  \|\iota_\mu (f)\|_{L^2(|\mu|)}\geq \frac{C}{\|\iota_\mu \|} \|f\|_2,\]
	and thus $\iota_\mu$ is bounded below.

(2)
	First, recall that since $A_\mu=A_\mu^*$, $A_\mu$ is invertible if and only if $A_\mu$ is bounded below.
	Thus	 (a) $\implies$ (b) follows  from part (1).    
	
	Conversely, assume that $\iota_\mu$ is bounded below and let $C$ be the constant defined  in $(b)$. It remains to check that $A_\mu$ is bounded below. 
	
	For a nonzero $f\in K_\Theta$, we have:
	\[\|A_\mu f\|_2\geq \left|  \langle A_\mu f, f/\|f\|_2\rangle  \right|=\frac{1}{\|f\|_2} \|\iota_\mu(f)\|^2_{L^2(\mu)}\geq \frac{1}{\|f\|_2}\frac{1}{C}\|f\|_2^2=\frac{\|f\|_2}{C},\]
	as expected.
\end{proof}

Volberg \cite{volberg} proved that given $\varphi\in L^\infty ({\mathbb T})$, and an inner function $\Theta$, the following are equivalent: 
\begin{itemize}
	\item there exist $C_1,C_2>0$ such that 
	\[  C_1\|f\|_2\leq \|f\|_{L^2(|\varphi|dm)} \leq C_2\|f\|_2, \]
	for all $f\in K_\Theta$;
	\item there exists $\delta>0$ such that \[\widehat{|\varphi|}(\lambda)+|\Theta(\lambda)|\geq \delta,\]
	for all $\lambda\in {\mathbb D}$.  
\end{itemize} 

 Volberg's result allows the translation of the embedding conditions in~Theorem~\ref{th:bb} into concrete functional inequalities, leading to the following statement.

\begin{theorem}\label{th:vol}
	Let $\varphi\in L^\infty ({\mathbb T})$ and let $\Theta$ be an inner function. 
	\begin{enumerate}
		\item If $A_\varphi^\Theta$ is bounded below, then there exists $\delta>0$ such that \[\widehat{|\varphi|}(\lambda)+|\Theta(\lambda)|\geq \delta,\]
		for all $\lambda\in {\mathbb D}$.  
		\item	If $\varphi\geq 0$, the following assertions are equivalent: 
		\begin{enumerate}
			\item The operator $A_\varphi^\Theta$ is invertible;
			\item there exists $\delta>0$ such that \[\widehat{|\varphi|}(\lambda)+|\Theta(\lambda)|\geq \delta,\]
			for all $\lambda\in {\mathbb D}$.  
		\end{enumerate}	
		
	\end{enumerate}
\end{theorem}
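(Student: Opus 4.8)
The plan is to deduce Theorem~\ref{th:vol} by combining Theorem~\ref{th:bb} with Volberg's criterion, after translating everything into the language of embedding measures. Put $\mu=\varphi\,dm$. Since $\varphi\in L^\infty$, for $f,g\in K_\Theta$ we have $\langle A_\mu f,g\rangle=\int f\bar g\,\varphi\,dm=\langle P_\Theta(\varphi f),g\rangle$, so $A_\mu=A_\varphi^\Theta$; moreover $|\mu|=|\varphi|\,dm$, and the embedding $\iota_\mu$ is simply the inclusion $K_\Theta\hookrightarrow L^2(|\varphi|\,dm)$, which is bounded because $\|f\|^2_{L^2(|\varphi|dm)}=\int|f|^2|\varphi|\,dm\le\|\varphi\|_\infty\|f\|_2^2$. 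In particular $\mu\in\CC_2(\Theta)$, and $\mu\in\CC_2^+(\Theta)$ when $\varphi\ge0$. The point to keep in mind is that the \emph{upper} half of Volberg's two-sided norm equivalence on $K_\Theta$ is automatic here; only the \emph{lower} half is at stake, and that is exactly what ``$\iota_\mu$ bounded below'' means.

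For (1): if $A_\varphi^\Theta$ is bounded below, then Theorem~\ref{th:bb}(1) gives a constant $C>0$ with $\|f\|_2^2\le C\int|f|^2|\varphi|\,dm$ for all $f\in K_\Theta$, i.e. $\iota_\mu$ is bounded below. Combined with the automatic upper bound, we obtain the two-sided estimate $C_1\|f\|_2\le\|f\|_{L^2(|\varphi|dm)}\le C_2\|f\|_2$ on $K_\Theta$, and Volberg's theorem then produces $\delta>0$ with $\widehat{|\varphi|}(\lambda)+|\Theta(\lambda)|\ge\delta$ for all $\lambda\in{\mathbb D}$.

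For (2), assume $\varphi\ge0$. Then $A_\varphi^\Theta\in\TT(\Theta)^+$ (it is bounded and $\langle A_\varphi^\Theta f,f\rangle=\int|f|^2\varphi\,dm\ge0$) and $\mu=|\mu|\in\CC_2^+(\Theta)$, so Theorem~\ref{th:bb}(2) applies: $A_\varphi^\Theta$ is invertible if and only if there is $C>0$ with $\|f\|_2^2\le C\int|\iota_\mu(f)|^2\,d\mu=C\int|f|^2\varphi\,dm$ for all $f\in K_\Theta$, i.e. if and only if $\iota_\mu$ is bounded below. As in (1), using the free upper bound this is equivalent to the two-sided norm equivalence on $K_\Theta$, hence by Volberg's theorem to the condition $\widehat{|\varphi|}(\lambda)+|\Theta(\lambda)|\ge\delta$ on ${\mathbb D}$; this gives the equivalence of (a) and (b).

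There is no substantial obstacle: the argument is essentially bookkeeping glued onto the two quoted results. The two matters requiring a little care are (i) that Theorem~\ref{th:bb} supplies, and for positive operators is equivalent to, only the \emph{lower} embedding estimate, whereas Volberg's result is stated as a two-sided norm equivalence — so one must record that the upper estimate is free since $\varphi\in L^\infty$; and (ii) the distinction between $|\mu|$ and $\mu$, which is why part (1) naturally involves $\widehat{|\varphi|}$ rather than $\widehat{\varphi}$, and why the hypothesis $\varphi\ge0$ in part (2) (making $\mu=|\mu|$) is exactly what lets the equivalence close up.
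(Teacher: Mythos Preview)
Your proposal is correct and follows exactly the approach the paper indicates: the paper does not give an explicit proof but simply remarks that Volberg's result allows one to translate the embedding conditions of Theorem~\ref{th:bb} into the stated functional inequalities, and your argument carries out precisely this translation with the appropriate care about the automatic upper bound and the distinction between $\mu$ and $|\mu|$.
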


Denote by $\sigma_{ap}(T)$ the approximate point spectrum of a bounded operator $T$. 
\begin{corollary}
	Let $\varphi\in L^\infty ({\mathbb T})$ and let $\Theta$ be an inner function. Then 
	\[   \{ \mu\in {\mathbb C} :\inf_{\lambda\in {\mathbb D} }( \widehat{|\varphi-\mu|}(\lambda) +|\Theta (\lambda)|)=0 \}\subset \sigma_{ap}(A_\varphi^\Theta). \]
\end{corollary}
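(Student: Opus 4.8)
The plan is to derive this as a direct consequence of Theorem~\ref{th:vol}(1), via the standard characterization of the approximate point spectrum. Recall that for a bounded operator $T$ one has $\mu\in\sigma_{ap}(T)$ precisely when $T-\mu I$ fails to be bounded below. So the first step is to observe that $A^\Theta_\varphi-\mu I=A^\Theta_{\varphi-\mu}$: indeed, for $f\in K_\Theta$ we have $P_\Theta((\varphi-\mu)f)=P_\Theta(\varphi f)-\mu P_\Theta f=A^\Theta_\varphi f-\mu f$, since the constant function $1$ is a symbol of the identity on $K_\Theta$. Moreover $\varphi-\mu\in L^\infty(\mathbb T)$, so $A^\Theta_{\varphi-\mu}$ is a bona fide bounded TTO and Theorem~\ref{th:vol} applies with $\varphi$ replaced by $\varphi-\mu$.

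Next I would argue by contraposition. Fix $\mu$ in the left-hand set, so that
\[
\inf_{\lambda\in\mathbb D}\bigl(\widehat{|\varphi-\mu|}(\lambda)+|\Theta(\lambda)|\bigr)=0,
\]
i.e.\ there is \emph{no} $\delta>0$ with $\widehat{|\varphi-\mu|}(\lambda)+|\Theta(\lambda)|\ge\delta$ for all $\lambda\in\mathbb D$. By the contrapositive of Theorem~\ref{th:vol}(1) (applied to $\varphi-\mu$), the operator $A^\Theta_{\varphi-\mu}$ is not bounded below. By the first step this says $A^\Theta_\varphi-\mu I$ is not bounded below, hence $\mu\in\sigma_{ap}(A^\Theta_\varphi)$, which is the desired inclusion.

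There is essentially no serious obstacle here; the statement is a packaging of Theorem~\ref{th:vol}(1) together with the elementary facts that scalar subtraction corresponds to subtracting a constant symbol and that $\sigma_{ap}$ is detected by failure of boundedness below. The only points requiring a line of care are verifying that $A^\Theta_\varphi-\mu I=A^\Theta_{\varphi-\mu}$ and noting that $\varphi-\mu$ remains in $L^\infty(\mathbb T)$ so that Theorem~\ref{th:vol} is indeed applicable; both are immediate.
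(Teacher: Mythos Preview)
Your proof is correct and is exactly the argument the paper intends: the corollary is stated immediately after Theorem~\ref{th:vol} without a written proof, and the implicit derivation is precisely the contrapositive of part~(1) applied to $\varphi-\mu$, together with $A^\Theta_\varphi-\mu I=A^\Theta_{\varphi-\mu}$. There is nothing to add.
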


\section*{Acknowledgements}

The authors thank Richard Rochberg for some useful discussions. The authors 
were partially supported by French-Romanian project
LEA-Mathmode.

%

	 \newcommand{\noop}[1]{} \def\cprime{$'$} \def\cprime{$'$} \def\cprime{$'$}
	 \def\cprime{$'$}

\end{document}